\newtheorem{thm}{Theorem}[section]
\newtheorem{prop}[thm]{Proposition}
\newtheorem{lem}[thm]{Lemma}
\newtheorem{cor}[thm]{Corollary}
\newtheorem{defn}[thm]{Definition}
\theoremstyle{definition}
\newtheorem{example}[thm]{Example}
\newtheorem{conjecture}[thm]{Conjecture}
\newtheorem{claim}[thm]{Claim}
\newtheorem{rem}[thm]{Remark}
\NewDocumentCommand{\binomial}{omm}
{%
	\genfrac(){0pt}{}{#2}{#3}%
	\IfValueT{#1}{_{\!#1}}%
}
\NewDocumentCommand{\eulerian}{omm}
{%
	\genfrac<>{0pt}{}{#2}{#3}%
	\IfValueT{#1}{_{\!#1}}%
}
\def \s {\sigma}
\def \be {\begin{equation}}
	\def \ee {\end{equation}}
\def \ba {\begin{eqnarray}}
	\def \ea {\end{eqnarray}}
\def \be {\begin{equation}}
	\def \en {\end{equation}}
\def \bes {\begin{eqnarray}}
	\def \ens {\end{eqnarray}}
\def \s {\textsf{s}}
\def \C {\textsf{C}}
\def \tp {||}
\def \x {z}
\def \V {\textsf{V}}
\def \w {\omega}
\def \e {\epsilon}
\def \spk {\mathcal{S}^{(PK)}_{k,n}}
\newcommand{\cat}[2]{C^{(#1)}_{#2}}
\numberwithin{equation}{section}
\title{Planar Kinematics: Cyclic Fixed Points, Mirror Superpotential, k-Dimensional Catalan Numbers, and Root Polytopes}
\author{Freddy Cachazo and}\emailAdd{fcachazo@pitp.ca}
\author{Nick Early}\emailAdd{earlnick@gmail.com}
\affiliation{Perimeter Institute for Theoretical Physics, Waterloo, ON N2L 2Y5, Canada}
\abstract{
	In this paper we prove that points in the space  $X(k,n)$ of configurations of $n$ points in $\mathbb{CP}^{k-1}$ which are fixed under a certain cyclic action are the solutions to the generalized scattering equations on planar kinematics (PK). In the first part, we give a constructive upper bound: we show that these solutions inject into certain aperiodic k-element subsets of $\{1,\ldots, n\}$, and consequently that their number is bounded above by the number of Lyndon words with k one's and n-k zeros. The proof uses a somewhat surprising connection between the superpotential of the mirror of $G(n-k,n)$ and the generalized CHY potential on $X(k,n)$. We also check the recent conjecture that generalized biadjoint amplitudes evaluate to $k$-dimensional Catalan numbers on PK for several examples including $k=3$ and $n\leq 40$ and $(k,n)=(6,13)$. We then reformulate the CEGM generalized biadjoint scalar amplitude directly as a Laplace transform-type integral over ${\rm Trop}^+ G(k,n)$ and we use it to evaluate the amplitude on PK with the purpose of exhibiting how Generalized Feynman Diagrams glue together. 
	
	We initiate the study of two minimal lattice polytopal neighborhoods of the planar kinematics point.  One of these, the rank-graded root polytope $\mathcal{R}_{k,n}$, in the case $k=2$, is a projection of the standard type A root polytope.  The other, denoted $\Pi_{k,n}$, in the case $k=2$, is a degeneration of the associahedron.   We check up to and including $\mathcal{R}_{3,9}$ and $\mathcal{R}_{4,9}$ that the relative volume of $\mathcal{R}_{k,n}$ is the multi-dimensional Catalan number $C^{(k)}_{n-k}$, hinting towards the possibility of deeper geometric and combinatorial interpretations of $m^{(k)}(\mathbb{I}_n,\mathbb{I}_n)$ near the PK point.

}
\begin{document}
	\maketitle
	% \addtocontents{toc}{\protect\setcounter{tocdepth}{1}}
	\addtocontents{toc}{\protect\setcounter{tocdepth}{2}}

	%%%%%%%%%%%%%%%%%%%%%%%%%%%%%%%%%%%%%
	
	\section{Introduction}\label{sec:introduction}
	
	%%%%%%%%%%%%%%%%%%%%%%%%%%%%%%%%%%%%%%
	
	Motivated by Cachazo-He-Yuan (CHY) definition of biadjoint double partial amplitudes, $m_n(\mathbb{I}, \mathbb{I})$, as integrals over the configuration space of $n$ points on $\mathbb{CP}^{1}$ localized to points satisfying the scattering equations \cite{Cachazo:2013gna,Cachazo:2013hca,Cachazo:2013iea,Fairlie:1972zz,Fairlie:2008dg}, Guevara, and Mizera and the two authors (CEGM) introduced a generalization of the CHY formulation that uses the configuration space of $n$ points on $\mathbb{CP}^{k-1}$ \cite{Cachazo:2019ngv,Cachazo:2019apa,Cachazo:2019ble} usually denoted by $X(k,n)$.   
	This also led to generalized biadjoint amplitudes $m_n^{(k)}(\mathbb{I}, \mathbb{I})$. 
	
	In 2013, CHY noticed that the kinematic invariants of all possible planar poles in a $k=2$ biadjoint amplitude form a basis of the corresponding kinematic space \cite{Cachazo:2013iea}. Using this fact CHY set all planar kinematic invariants to unity so that each planar Feynman diagram contributes exactly $1$ to the amplitude leading to the result that $m_n^{(2)}(\mathbb{I},\mathbb{I}) = C_{n-2}$ with $C_m$ the $m^{\rm th}$ Catalan number. 
	
	Very recently in \cite{MKandPK} the authors proposed a generalization of the $k=2$ {\it planar-basis} kinematics to all $k$ and $n$ using the planar basis introduced by the second author in \cite{Early:2019eun}.  This kinematics turns out to be a single integer point in the kinematic space which we call the \textit{PK point}.  In \cite{MKandPK} the scattering equations were solved for $k=3$ and $n=5,6,7,8$ and the corresponding CEGM biadjoint amplitudes were evaluated. The explicit results led the authors to conjecture that these amplitudes evaluate to the $k$-dimensional Catalan numbers (see O.E.I.S. A060854 \cite{oeis}), i.e. 
	\begin{eqnarray}\label{eq:planar kinematics amplitude intro}
		m_n^{(k)}(\mathbb{I}, \mathbb{I}) & = & \cat{k}{n-k}.
	\end{eqnarray}
	Clearly, $C^{(2)}_{n-2}$ coincides with the standard Catalan numbers.    
	
	In this note we continue the study of the scattering equations evaluated on the PK point, and its deformations: this culminates in Section \ref{sec: blades, polytopes} where we initiate the study of two minimal polytopal neighborhoods in the integer lattice in the kinematic space which are closely linked to the evaluation of the amplitude.  Here the PK point is the integer point in the kinematic space where a certain family of $\binom{n}{k}$ linear functions, denoted $\eta_J$ for $J$ a k-element subset of $\{1,\ldots, n\}$, on the kinematic space are either 0 or 1.  
	
	A linear functional $\eta_J$, introduced in Section \ref{sec: Blades, Planar Bases and Polymatroidal Blade Arrangements} is dual to piecewise linear surface over a hypersimplex, pinned to to one of its vertices such that the bends define a tropical hypersurface called a \textit{blade}.  In \cite{Early:2019zyi} it was shown that \textit{matroidal} blade arrangements on a hypersimplex $\Delta_{k,n}$ are in bijection with weakly separated collections\footnote{The weak separation condition was introduced by Leclerc and Zelevinsky in \cite{leclerc1998quasicommuting}.} and thus could be viewed as living inside the homogeneous component of a subalgebra of the cluster algebra of the Grassmannian $G(k,n)$.  
	
	Here the linear functions $\eta_J$ are constructed by lifting certain positroidal multi-split matroid subdivisions of the hypersimplex $\Delta_{k,n}$ to a piecewise linear surface and pairing with a point in the kinematic space.  See Equation \eqref{eq:planar basis element} for the definition and \cite{Early:2019eun} and \cite{Early:2020hap} for details and related constructions in combinatorics and applications to generalized Feynman diagrams.
	
	Specifically, $\eta_J=0$ for cyclically consecutive subsets $J = \{i,i+1,\ldots, i+(k-1)\}$ and $\eta_J=1$ for all of the remaining $k$-element subsets of $\{1,\ldots, n\}$.  Solving these $\binom{n}{k}$ equations gives the point in kinematic space which we call planar kinematics.
	
	The solution has the following simple formula.  Fixing a planar ordering such as the canonical order $\mathbb{I}:=(1,2,\ldots ,n-1,n)$, set
	\begin{align}\label{planarK}
		\s_{12\ldots k}=\s_{23\ldots k+1}=\ldots =\s_{n1\ldots k-1}=1, \\
		\s_{n1\ldots k-2,k}=\s_{12\ldots k-1,k+1}=\ldots = \s_{n-1,n\ldots k-3,k-1}=-1,\nonumber
	\end{align}
	where all other $\s_J$ are set to zero.
	
	The fact that the kinematics is cyclically invariant, i.e., invariant under a cyclic shift of the labels $i \to i+1\, \mod \, (n)$, motivated us to look for solutions with the same property. In other words, we are interested in points in $X(k,n)$ which are fixed under a cyclic shift. In Sections \ref{sec: critical points}, \ref{sec: superpotential PK potential equivalence} and \ref{sec:enumeration cyclic fixed points}, we find all such points and prove that they are indeed all the solutions to the scattering equations on planar kinematics. Some such points do not lie in $X(k,n)$ but in its compactification $\overline{X}(k,n)$.
	
	The proof, given in Section \ref{sec: superpotential PK potential equivalence}, uses that there is a very close relation between the scattering equations on planar kinematics, i.e. the equations for the critical points of 
	\be\label{introPot}
	\spk = \sum_{i=1}^n \log \left( \frac{\Delta_{i,i+1,\ldots i+(k-2),i+(k-1)}}{\Delta_{i,i+1,\ldots i+(k-2),i+k}} \right)
	\ee
	and those for the critical points of the superpotential in the theory mirror to the Grassmannian $G(n-k,n)$ introduced by Marsh and Rietsch in \cite{marsh2020b}
	\be\label{kapi}
	{\cal F}_q := \sum_{i=1,\, i\neq n-k}^{n}\frac{\Delta_{i,i+1,\ldots, i+(k-2),i+k}}{\Delta_{i,i+1,\ldots, i+(k-2),i+(k-1)}} + q \frac{\Delta_{n-k,n-k+1,\ldots,n-1,1}}{\Delta_{n-k,n-k+1,\ldots, n-1,n}}.
	\ee
	Here $\Delta_{i_1,i_2\ldots, i_k}$ are the Plucker coordinates of $G(k,n)$ and $q$ is a parameter.
	
	In \cite{karp2019moment} Karp proved that all critical points of the mirror superpotential, ${\cal F}_q$, are in fact fixed points under a cyclic action. Our proof provides the criteria for a fixed point in $G(k,n)$ to descend to one in $X(k,n)$ and become a critical point of $\spk$. In Section \ref{sec:enumeration cyclic fixed points}, we prove that these solutions inject into aperiodic k-element subsets of $\{1,\ldots, n\}$, and consequently that their number is bounded above by the number of Lyndon words with k one's and n-k zeros.
	
	The construction of the fixed points is explicit and therefore it is possible to evaluate the CEGM biadjoint amplitudes on them.  In Section \ref{sec: Evaluating CEGM Biadjoint Amplitudes}, we develop new techniques to evaluate the CEGM biadjoint amplitude, for k=3,4 in particular.
	
	In Section \ref{sec:CEGM Amplitudes on Planar Kinematics}, we perform the explicit evaluations up to $(k,n)=(3,40)$, $(k,n)=(4,29)$, $(k,n)=(5,19)$, and $(k,n)=(6,13)$. In all cases we find perfect agreement with the $k$-dimensional Catalan numbers. 
	
	Since the $2$-dimensional Catalan numbers count planar Feynman diagrams and CEGM biadjoint amplitudes have been related to the positive tropical Grassmannian ${\rm Trop}^+ G(k,n)$, it is natural to ask what the $k$-dimensional Catalan numbers are counting. The CEGM biadjoint amplitudes have also been defined as the sum over generalized Feynman diagrams (GFD) \cite{Borges:2019csl,Guevara:2020lek}. However, it is known that that for $k>2$ they are not counted by higher dimensional Catalan numbers \cite{MKandPK}. In fact, on planar kinematics individual GFD's evaluate to rational numbers. Motivated by this puzzle, in Section \ref{sec:tropGrass Eval} we introduce an integral of an exponentiated piecewise linear function supported on $\mathbb{R}^{(k-1)(n-k-1)}$ which computes the amplitude. The integral can be thought of as the Laplace transform of ${\rm Trop}^+ G(k,n)$.
	
	In fact, in the examples we studied, when the integral is evaluated on generic kinematics it splits into regions which coincide with individual GFD's. However, on planar kinematics it simplifies and the number of linear regions is much smaller. Moreover, each such region contributes a positive integer number hinting the existence of a polytopal interpretation.

	In Section \ref{sec: blades, polytopes}, we initiate the study of two families of lattice polytopes which are related by duality: first, we define rank-graded root polytopes $\hat{\mathcal{R}}_{k,n}$, and in particular, their projections, the root polytopes $\mathcal{R}_{k,n}$.  In the case $k=2$, then $\hat{\mathcal{R}}_{2,n}$ coincides with the usual root polytope introduced in \cite{postnikov2009permutohedra}, which is the convex hull of the origin together with the set of positive roots $e_i-e_j$ for $i<j$.  Moreover, $\mathcal{R}_{2,n}$ is a codimension 1 projection of it,

	Also in Section \ref{sec: blades, polytopes}, we initiate the study of a family of lattice polytopes $\Pi_{k,n}$ which are in duality with the polytopes $\mathcal{R}_{k,n}$ and which specialize in the case k=2 to a degeneration of the associahedron.  We show that $\Pi_{k,n}$ minimally bounds the PK point in the integer lattice in the kinematic space.  We conjecture the expression of $\Pi_{k,n}$ as a Newton polytope.
	
	Based on computations in SageMath of the volume of $\mathcal{R}_{k,n}$ for nontrivial values of $k$ and $n$, including $\mathcal{R}_{3,9}$ and $\mathcal{R}_{4,9}$, we finally conjecture that the rank-graded root polytope $\mathcal{R}_{k,n}$ has volume the multi-dimensional Catalan number $\cat{k}{n-k}$, thus hinting towards a deeper polytopal (and in particular combinatorial) interpretation of Equation \eqref{eq:planar kinematics amplitude intro}.

	%%%%%%%%%%%%%%%%%%%%%%%%%%%%%%%%%%%%%
	
	\section{Motivation: Fixed Points under a Cyclic Shift on $\overline{X}(k,n)$} \label{sec: cyclic shift}
	
	%%%%%%%%%%%%%%%%%%%%%%%%%%%%%%%%%%%%%%
	
	The space $X(k,n)$ of configurations of $n$ labeled points on $\mathbb{CP}^{k-1}$ can be represented by selecting homogeneous coordinates for the $n$ points and arranging them in a $k\times n$ matrix. The space can be formally defined as
	\be
	X(k,n) := SL(k)\backslash M^*(k,n) / \left(\mathbb{C}^*\right)^n
	\ee
	where $M^*(k,n)$ is the set of all $k\times n$ matrices with no vanishing minors. $SL(k)$ is the automorphism group of $\mathbb{CP}^{k-1}$ while the algebraic torus $\left(\mathbb{C}^*\right)^n$ corresponds to the projective action on each point.
	
	In order to study the solutions to the scattering equations in the next section, it turns out to be necessary to also include configurations of points represented by $k\times n$ matrices with vanishing minors whose only constraint is that no column is identically zero (so that the point is in $\mathbb{CP}^{k-1}$) and have maximal rank $k$, so that the action of $SL(k)$ is well-defined. We denote the extended space by $\overline{X}(k,n)$. A formal definition of $\overline{X}(k,n)$ as the compactification of $X(k,n)$ is beyond the scope of this work since we are only interested in particular points so a set-theoretic description suffices. 
	
	In order to find the points of interest we have to define a cyclic action on $\overline{X}(k,n)$.  Denote by $\mathbf{T} \hookrightarrow GL(n)$ the embedding of the torus $(\mathbb{C}^\ast)^n$ into the diagonal of $GL(n)$.
	
	With $e_1,\ldots, e_n$ the standard basis for $\mathbb{C}^n$, define a linear operator $\rho_n\in GL_n$ by $$\rho_n(e_j) = e_{j-1}$$
	where the indices are cyclic modulo $n$.
	
	The Grassmannian $G(k,n)$ admits a natural right-action of the cyclic group $\mathbb{Z}\slash n \simeq \langle \rho_n\rangle$ generated by $\rho_n$:
	$$\rho_n(g) = g\cdot \rho^{-1}_n.$$
	
	Further, letting $\mathbf{G}$ be the embedding of the semi-direct product $\mathbf{T}\rtimes \mathbb{Z}\slash n$ into $GL(n)$, then $\mathbf{G}$ acts on $\overline{X}(k,n)$ from the right; in a slight abuse of terminology, we will simply say that a torus orbit $\lbrack g\rbrack$ that is preserved by $\mathbf{G}$ is a \textit{cyclic fixed point}.  We are interested in the set of cyclic fixed points in $\overline{X}(k,n)$ of $\mathbf{G}$.
	
	In order to clarify the discussion which follows, let us be completely explicit about what it means for an element of $\overline{X}(k,n)$ to be fixed by $\mathbf{G}$.  Given $g\in G(k,n)$, denote by $\lbrack g\rbrack \in \overline{X}(k,n)$ the $T$-orbit of $g$.
	
	\begin{prop}\label{prop: cyclic}
		An element $g\in G(k,n)$ descends to a cyclic fixed point $\lbrack g\rbrack \in \overline{X}(k,n)$ provided that for any $\lambda_1\in\mathbf{T}$ we have
		$$(g\cdot \lambda_1)\rho_n^{-1} = g\cdot \lambda_2$$
		for some $\lambda_2\in \mathbf{T}$.
	\end{prop}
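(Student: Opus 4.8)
The plan is to unwind the definitions and then exploit that the cyclic part of $\mathbf{G}$ has finite order. By construction $[g]$ is a single $\mathbf{T}$-orbit $g\cdot\mathbf{T}$ inside $G(k,n)$, and $\mathbf{G}=\mathbf{T}\rtimes\langle\rho_n\rangle$ is generated as a group by $\mathbf{T}$ together with $\rho_n$. Right multiplication by any element of $\mathbf{T}$ carries $g\cdot\mathbf{T}$ onto itself, so the orbit $[g]$ is automatically $\mathbf{T}$-stable; the first reduction I would make is therefore: \emph{$[g]$ is a cyclic fixed point if and only if it is stable under the single generator $\rho_n$}, i.e. if and only if $[g]\cdot\rho_n^{-1}=[g]$ as subsets of $G(k,n)$.

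Next I would translate the hypothesis into this language. The elements of $[g]\cdot\rho_n^{-1}$ are precisely the matrices $(g\cdot\lambda_1)\rho_n^{-1}$ with $\lambda_1\in\mathbf{T}$, and the condition ``$(g\cdot\lambda_1)\rho_n^{-1}=g\cdot\lambda_2$ for some $\lambda_2\in\mathbf{T}$'' says exactly that such an element lies in $g\cdot\mathbf{T}=[g]$. So the hypothesis is nothing but the containment $[g]\cdot\rho_n^{-1}\subseteq[g]$: the cyclic shift maps the orbit into itself. Since $\rho_n$ is a permutation matrix we have $\rho_n\,\lambda_1\,\rho_n^{-1}\in\mathbf{T}$ and $(g\lambda_1)\rho_n^{-1}=(g\rho_n^{-1})(\rho_n\lambda_1\rho_n^{-1})$, so this containment is in fact equivalent to the single membership $g\cdot\rho_n^{-1}\in g\cdot\mathbf{T}$, which is the more economical thing to check in practice.

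The last step is to upgrade ``into itself'' to ``onto itself''. Because $\rho_n(e_j)=e_{j-1}$ with indices mod $n$ we have $\rho_n^{\,n}=\mathrm{id}$, so applying $\cdot\,\rho_n^{-1}$ repeatedly to $[g]\cdot\rho_n^{-1}\subseteq[g]$ yields a descending chain of subsets $[g]\supseteq[g]\cdot\rho_n^{-1}\supseteq[g]\cdot\rho_n^{-2}\supseteq\cdots\supseteq[g]\cdot\rho_n^{-n}=[g]$ whose first and last terms agree; a descending chain that returns to its starting set is constant, hence $[g]\cdot\rho_n^{-1}=[g]$. Together with the automatic $\mathbf{T}$-stability this shows $[g]$ is preserved by every generator of $\mathbf{G}$ and hence by all of $\mathbf{G}$. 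Finally I would observe that the right $\mathbf{G}$-action passes from $M^*(k,n)$ to $\overline{X}(k,n)$ because $\rho_n$ normalizes $\mathbf{T}$ and right multiplication commutes with the left $SL(k)$-quotient; so, given that $g$ has full rank $k$ and no identically zero column (which is what puts $[g]$ in $\overline{X}(k,n)$ in the first place), the orbit $[g]$ determines a well-defined point of $\overline{X}(k,n)$, and its $\mathbf{G}$-invariance is exactly the statement that this point is a cyclic fixed point.

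I do not expect a genuine obstacle here, since the proposition is a definitional clarification rather than a substantive result. The only care required is bookkeeping: keeping straight the left-versus-right conventions, the distinction between $\rho_n$ and $\rho_n^{-1}$ that is built into the action $\rho_n(g)=g\cdot\rho_n^{-1}$, and actually using --- rather than merely citing --- the fact that $\rho_n$ normalizes the diagonal torus $\mathbf{T}$ inside $GL(n)$, since that is both what makes $\mathbf{T}\rtimes\mathbb{Z}/n$ a genuine subgroup of $GL(n)$ and what allows the cyclic shift to be pushed through the $\mathbf{T}$-quotient.
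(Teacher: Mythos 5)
The paper states Proposition~\ref{prop: cyclic} as a definitional clarification and supplies no proof at all; it simply proceeds directly to find the fixed points. Your write-up is therefore filling in details the paper leaves implicit, and it does so correctly: you reduce $\mathbf{G}$-stability to $\rho_n$-stability (since $\mathbf{T}$-stability of $[g]=g\cdot\mathbf{T}$ is automatic), you observe that the hypothesis is exactly $[g]\cdot\rho_n^{-1}\subseteq[g]$ and that, because $\rho_n$ normalizes $\mathbf{T}$, this is equivalent to the single membership $g\rho_n^{-1}\in g\mathbf{T}$, and you upgrade the inclusion to equality using $\rho_n^n=\mathrm{id}$.

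One small simplification is available for the last step. Since $\rho_n$ normalizes $\mathbf{T}$, the set $[g]\cdot\rho_n^{-1}=g\rho_n^{-1}(\rho_n\mathbf{T}\rho_n^{-1})=[g\rho_n^{-1}]$ is itself a single $\mathbf{T}$-orbit; distinct $\mathbf{T}$-orbits are disjoint, so an orbit contained in $[g]$ must equal $[g]$. That replaces the descending-chain argument (which is also fine, but uses the finite order of $\rho_n$ unnecessarily). Your closing remark is the right one to make explicit: the hypothesis that $g$ gives a well-defined point of $\overline{X}(k,n)$ at all (full rank and no identically zero column) is needed before one can speak of $[g]$ being a cyclic fixed point there.
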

	
	We are interested in finding all fixed points of the cyclic shift. This is easily done by using $SL(k)$ and the torus action to fix the first column of the $k\times n$ matrix to be $(1,1,\ldots ,1)^T$. Consider any row of the matrix and denote its elements as $(1, x_{1},x_{2},\ldots ,x_{n-2},x_{n-1})$. The action of the cyclic shift is
	\be
	(1, x_{1},x_{2},\ldots ,x_{n-2},x_{n-1}) \longrightarrow (x_{1},x_{2},\ldots ,x_{n-2},x_{n-1},1).
	\ee
	Let us impose the condition that this be a fixed point. It is often convenient to combine the diagonal $GL(1)$ in $\left(\mathbb{C}^*\right)^n$ with $SL(k)$ into a $GL(k)$ action. In fact, in order to compare the matrix after the shift with the original one it is necessary to apply a $GL(k)$ transformation that multiplies the row by $1/x_{1}$ so as to normalize the first component. Having done this we have to require 
	\be
	(1, x_{1},x_{2},\ldots ,x_{n-2},x_{n-1}) = (1,x_{2}/x_{1},\ldots ,x_{n-2}/x_{1},x_{n-1}/x_{1},1/x_{1}).
	\ee
	These $n-1$ equations are equivalent to $x_i = (x_1)^i$ for $i\in \{2,3,\ldots ,n-1\}$ and $(x_1)^n=1$. Denoting $q=\exp 2\pi i/n$ the basic root of unity, one has $n$ possibilities for $x_1$ given by the $\{0^{\rm th},1^{\rm st},2^{\rm nd},\ldots ,(n-1)^{\rm th}\}$ powers of $q$.  
	
	It is now clear that in order to obtain a cyclic fixed point each row  in $k\times n$ matrix representative of the point must have the form
	\be
	\left( 1,\omega_a,\omega_a^2,\ldots ,\omega_a^{n-1} \right)
	\ee
	with $\omega_a := q^{m_a}$ and $m_a\in \{0,1,\ldots ,n-1\}$. 
	
	This amounts to a choice of $k$ integers. However, using the torus action the last row can be fixed to have all components equal to one, which brings down the number of choices to $k-1$. The fact that the matrix must have maximal rank requires all rows to be different and therefore a fixed point can be labeled by a $k-1$ tuple $\{ m_1,m_2,\ldots ,m_{k-1}\}$. Sometimes it would be convenient to use a $k$-tuple description where the $k^{\rm th}$ integer is set to be $m_k:= n $. We alternate between descriptions based on the application. 
	
	Finally, let us describe how a given $k-1$ tuple can generate $k-1$ equivalent ones. Consider a given choice $\{ \w_1,\w_2,\ldots ,\w_{k-1}\}$ of distinct $n^{\rm th}$ roots of unity corresponding to the choice $\{ m_1,m_2,\ldots ,m_{k-1}\}$.  The configuration of points on $\mathbb{CP}^{k-1}$ is then given by the $k\times n$ matrix with the $i^{th}$ column defined as
	\be
	\left( 1, \w_1^{(i-1)},\w_2^{(i-1)},\ldots ,\w_{k-1}^{(i-1)} \right)^T.
	\ee
	Let us choose any value $b\in \{1,2,\ldots ,k-1\}$ and use the torus action to rescale all columns as follows: Rescale the $i^{th}$ column by $(1/w_b)^{(i-1)}$. This has the effect of setting to $1$ the $b^{th}$ row of the $k\times n$ matrix defining the point on $X(k,n)$. Using a $SL(k)$ transformation to permute the rows we can send the row with all $1$'s to be the first one. This leads to a new matrix defining the {\it same} configuration of points in $\mathbb{CP}^{k-1}$ but with different values of integers. Moreover it is easy to find the new set of integers%\footnote{It is hard to overlook the fact that the transformation which leads to the new $k-1$ tuple has the exact form of a mutation in the quiver description of cluster algebras or of Seiberg duality in the physics context.}
	\be\label{clu}
	m_a \to \left\{ \begin{array}{cl}
		m_a - m_b & \;\;{\rm for} \; a\neq b, \\
		-m_a & \;\;{\rm for}\; a = b. 
	\end{array} \right.
	\ee
	When $n$ is prime this process groups all possibilities into
	\be\label{primeC}
	\frac{1}{k}\binomial{n-1}{k-1}
	\ee
	classes. 
	
	When $n$ is not prime the transformation \eqref{clu} does not necessarily produce distinct tuples and the number of classes has a structure that depends on the divisors of $n$. Clearly \eqref{primeC} provides an upper bound.  Indeed, when $n$ is not prime a tighter upper bound can be obtained.  
	
	To this end, in Section \ref{sec:enumeration cyclic fixed points}, to which we refer for details and expanded discussions, we give an injection into the set of certain aperiodic $k$-element subsets of $\{1,\ldots,n \}$ and we give a constructive upper bound for the number of cyclic fixed points in $\overline{X}(k,n)$.

	%%%%%%%%%%%%%%%%%%%%%%%%%%%%%%%%%%%%%
	
	\section{Critical Points of ${\cal S}_{k,n}$ on Planar Kinematics}\label{sec: critical points}
	
	%%%%%%%%%%%%%%%%%%%%%%%%%%%%%%%%%%%%%%
	
	In this section we study critical points of the function on $X(k,n)$ which is used in the definition of generalized biadjoint amplitudes when evaluated on planar kinematics,
	\be\label{eq: planar kinematics scattering equations}
	\spk = \sum_{i=1}^n \log \left( \frac{p_{i,i+1,\ldots i+(k-2),i+(k-1)}}{p_{i,i+1,\ldots i+(k-2),i+k}} \right).
	\ee
	Here all indices are defined modulo $n$ and $p_{i_1,i_2,\ldots ,i_k}$ denotes the minor of a $k\times n$ matrix representative of a point in $\overline{X}(k,n)$ made from columns $\{ i_1,i_2,\ldots ,i_k\}$. Note that the notation differs from the one the introduction \eqref{introPot} which was written in terms of Plucker coordinates of $G(k,n)$ in which the torus variables are exhibited explicitly but as it is well-known they completely drop out.  
	
	The aim of this section is to give a physically intuitive reason for why the critical points of $\spk$ are the cyclic fixed points discussed in the previous section. A formal proof requires making a connection to the superpotential of the mirror of the Grassmannian $G(n-k,n)$ and it is postponed to Section \ref{sec: superpotential PK potential equivalence}. 
	
	If $\spk$ is taken as a potential function describing the interaction of particles then particle at point $a$ only interacts with particles with indices in a range determined by the value of $k$. For example, if $k=2$, particle at point $a$ only interacts with particles at points $a-1$ and $a+1$. This is known as a nearest neighbor interaction if particles are thought of as spins on in a periodic one dimensional chain. The analogy with a spin chain is stronger if we allow each spin to carry degrees of freedom in $\mathbb{CP}^{k-1}$.
	
	In order to study the critical points of $\spk$ it is convenient to use a strategy familiar in statistical mechanics. We first consider the problem of an infinite number of spins on a line and then find solutions which satisfy the correct periodic boundary conditions to be interpreted as solutions to the problem of $n$ spins on a circle. 
	
	\subsection{Solving the Infinite Chain}
	
	Let us define the case of an infinite chain as that given by 
	\be
	{\cal S}_{k,\infty } := \sum_{i=-\infty}^\infty \log \left(\frac{p_{i,i+1,\ldots i+(k-2),i+(k-1)}}{p_{i,i+1,\ldots i+(k-2),i+k}} \right).
	\ee
	
	In this function, the indices are allowed to run over all integers. It is only when we restrict to finite $n$ that indices will be defined modulo $n$.
	
	Consider inhomogeneous variables in $\mathbb{CP}^{k-1}$ given by $(1,x_1,x_2,\ldots ,x_{k-1})$. When denoting a particular point we use $(1,x_1^{(i)},x_2^{(i)},\ldots ,x_{k-1}^{(i)})$.
	
	\begin{prop}\label{sol}
		Any $(k-1)$-tuple, $\{ \w_1,\w_2,\ldots ,\w_{k-1}\}$, of non-zero and distinct complex numbers defines a critical point of ${\cal S}_{k,\infty }$ given by $x_a^{(i)} = \w_a^{i-1}$.
	\end{prop}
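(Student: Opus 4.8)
The plan is to work with the $k\times\infty$ matrix whose $j$-th column is the vector $V^{(j)}=(1,x_1^{(j)},\ldots,x_{k-1}^{(j)})^{T}$, with components indexed $0,1,\ldots,k-1$ (component $0$ being the constant $1$), and to show that on the configuration $x_a^{(i)}=\omega_a^{\,i-1}$ the gradient of $\mathcal{S}_{k,\infty}$ with respect to each $x_a^{(m)}$ vanishes. Write $C_i$ for the $k\times k$ submatrix on columns $i,i+1,\ldots,i+k-1$ and $D_i$ for the submatrix on columns $i,i+1,\ldots,i+k-2,i+k$, so that $\mathcal{S}_{k,\infty}=\sum_i\bigl(\log\det C_i-\log\det D_i\bigr)$. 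Since $x_a^{(m)}$ occurs only in the summands with $m-k\le i\le m$ (in $\log\det C_i$ for $m-k+1\le i\le m$, and in $\log\det D_i$ for $i=m-k$ or $m-k+2\le i\le m$), each equation $\partial\mathcal{S}_{k,\infty}/\partial x_a^{(m)}=0$ is a finite sum. As a sanity check: setting $\omega_0:=1$, on the proposed configuration the submatrices are generalized Vandermonde matrices with $\det C_i=\bigl(\prod_a\omega_a^{\,i-1}\bigr)\prod_{a<a'}(\omega_{a'}-\omega_a)$ and $\det D_i=\bigl(\sum_{a=0}^{k-1}\omega_a\bigr)\det C_i$, so every summand collapses to the $i$-independent constant $\log\bigl(1/(1+\omega_1+\cdots+\omega_{k-1})\bigr)$; this tacitly assumes $1+\omega_1+\cdots+\omega_{k-1}\neq 0$, for otherwise $\det D_i=0$ and the configuration sits on the boundary where $\mathcal{S}_{k,\infty}$ is not regular, so the statement is really about configurations at which the potential is defined.

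For the gradient itself I would use $\partial\log\det M/\partial M_{a,c}=(M^{-1})_{c,a}$ together with the fact that $\partial V^{(m)}/\partial x_a^{(m)}$ is the $a$-th standard basis vector; thus the gradient of $\mathcal{S}_{k,\infty}$ with respect to the vector $V^{(m)}$ (whose components $1,\ldots,k-1$ carry the physical equations) is the sum, over the windows containing $m$, of the corresponding rows of $C_i^{-1}$ minus the analogous rows of $D_i^{-1}$. Evaluating at $x_a^{(i)}=\omega_a^{\,i-1}$ makes everything factor: with $\Omega=\mathrm{diag}(1,\omega_1,\ldots,\omega_{k-1})$ one has $C_i=\Omega^{\,i-1}\mathcal{V}$ and $D_i=\Omega^{\,i-1}\mathcal{V}'$, where $\mathcal{V}_{a,b}=\omega_a^{\,b-1}$ is the Vandermonde matrix and $\mathcal{V}'$ differs from $\mathcal{V}$ only in that its last column has exponent $k$ rather than $k-1$ (so $\mathcal{V}'$ is invertible precisely when $1+\omega_1+\cdots+\omega_{k-1}\neq 0$, matching the caveat above). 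Hence $C_i^{-1}=\mathcal{V}^{-1}\Omega^{1-i}$ and $D_i^{-1}=\mathcal{V}'^{-1}\Omega^{1-i}$, so every inverse row appearing in the gradient is a fixed row of $\mathcal{V}^{-1}$ or $\mathcal{V}'^{-1}$ scaled by a power of $\Omega$, with $i$ surviving only through those powers.

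Finally I would reindex each window by the column position $c$ that $V^{(m)}$ occupies and collect terms. After multiplying the $a$-th component of the gradient with respect to $V^{(m)}$ by the nonzero factor $\omega_a^{\,m}$, it becomes
\[
\sum_{c=1}^{k}(\mathcal{V}^{-1})_{c,a}\,\omega_a^{\,c}\;-\;\Bigl(\sum_{c=1}^{k-1}(\mathcal{V}'^{-1})_{c,a}\,\omega_a^{\,c}+(\mathcal{V}'^{-1})_{k,a}\,\omega_a^{\,k+1}\Bigr)\;=\;\omega_a\Bigl[(\mathcal{V}\mathcal{V}^{-1})_{a,a}-(\mathcal{V}'\mathcal{V}'^{-1})_{a,a}\Bigr]\;=\;\omega_a(1-1)=0,
\]
since $\mathcal{V}_{a,c}=\omega_a^{\,c-1}$ while $\mathcal{V}'_{a,c}=\omega_a^{\,c-1}$ for $c\le k-1$ and $\mathcal{V}'_{a,k}=\omega_a^{\,k}$, so the two bracketed sums are exactly the $(a,a)$-entries of $\mathcal{V}\mathcal{V}^{-1}=I$ and $\mathcal{V}'\mathcal{V}'^{-1}=I$. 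Therefore the gradient with respect to $V^{(m)}$ vanishes identically — in particular its components $1,\ldots,k-1$, which are the critical-point equations of $\mathcal{S}_{k,\infty}$ — and the proposition follows. I expect the only genuine obstacle to be the bookkeeping of the sliding windows: getting right the off-by-one between the $C_i$- and $D_i$-windows and the isolated position $m=i+k$ of $D_{m-k}$; once the indices are lined up the collapse is forced by the trivial identities $\mathcal{V}\mathcal{V}^{-1}=\mathcal{V}'\mathcal{V}'^{-1}=I$. (A useful consistency check: $\mathcal{S}_{k,\infty}$ is invariant under the diagonal $GL(k)$-action on all columns, and on the proposed configuration its generator $\Omega$ implements the unit shift of the chain, so at the very least $\mathcal{S}_{k,\infty}$ is constant along that curve through the point.)
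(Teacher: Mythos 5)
Your proof is correct, and it is genuinely different from — in fact stronger than — what the paper offers. For this Proposition the paper gives only a heuristic ``telescopic cancellation'' argument: it works out $k=2$ explicitly, exhibits the cancellation pattern for $k=7$, and states that the technique has been used to verify the claim for $k<8$; the formal proof postponed to Section~4 applies to the \emph{finite} chain on $\overline{X}(k,n)$ via the mirror superpotential, not to $\mathcal{S}_{k,\infty}$. Your argument is uniform in $k$. The key new idea is the factorization $C_i = \Omega^{i-1}\mathcal{V}$ and $D_i = \Omega^{i-1}\mathcal{V}'$ with $\Omega = \mathrm{diag}(1,\omega_1,\ldots,\omega_{k-1})$, which makes $C_i^{-1}=\mathcal{V}^{-1}\Omega^{1-i}$ and $D_i^{-1}=\mathcal{V}'^{-1}\Omega^{1-i}$; combined with the Jacobi identity $\partial\log\det M/\partial M_{a,c}=(M^{-1})_{c,a}$ and the observation that $x_a^{(m)}$ enters only the $k$ windows containing column $m$ of each type, the gradient collapses to $\omega_a^{-m+1}\bigl[(\mathcal{V}\mathcal{V}^{-1})_{a,a}-(\mathcal{V}'\mathcal{V}'^{-1})_{a,a}\bigr]=0$. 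I verified the window bookkeeping (including the isolated $D_{m-k}$ contribution at column position $k$) and the reindexing $c=m-i+1$; both are correct. Your remark that the statement implicitly requires $1+\omega_1+\cdots+\omega_{k-1}\neq 0$ is also right and mirrors the explicit nonvanishing caveat the authors attach to the finite-chain versions (Proposition~3.2 / Theorem~3.3); that is the only place $\mathcal{V}'$ could fail to be invertible. In short: where the paper relies on a case-by-case cancellation checked through $k=7$, you give a closed-form linear-algebraic proof for all $k$ at once.
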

	
	As mentioned above, the aim of this section is to give an intuitive reason for the Proposition. We do so by giving an elementary technique that we have used to prove it for values of $k < 8$.  
	
	Let us illustrate the idea with the $k=2$ case. We can simplify the notation and set $x_1^{(i)}=y_i$. The critical points are obtained by setting to zero the derivative of the potential function
	\be\label{k2case}
	\frac{\partial {\cal S}_{2,\infty }}{\partial y_i} = -\frac{1}{y_{i-1}-y_i} +\frac{1}{y_{i}-y_{i+1}} + \frac{1}{y_{i-2}-y_i}-\frac{1}{y_{i}-y_{i+2}}.
	\ee
	In order to verify Proposition \eqref{sol}, we set $y_i = \w^{i-1}$ and substitute it into \eqref{k2case} to get
	\be
	\frac{\partial {\cal S}_{2,\infty }}{\partial y_i} = \frac{1}{\w^{(i-2)}(1-\w)}\left( -\frac{1}{\w} +\frac{1}{\w^2 }+\frac{1}{1+\w}-\frac{1}{\w^2(1+\w)} \right). 
	\ee
	Combining the first two terms and the last two terms one finds
	\be
	\frac{\partial {\cal S}_{2,\infty }}{\partial y_i} = \frac{1}{\w^{(i-2)}(1-\w)}\left(\frac{1-\w}{\w^2}+\frac{\w-1}{\w^2} \right) = 0.
	\ee
	It is important to notice that the cancellation leading to the vanishing result is $i$ independent. 
	
	The reason we have shown the cancellation in detail here is that it hints that for general $k$ the key is to combine terms pairwise hoping that a telescopic cancellation would take place. It turns out that this is indeed the case.
	
	We have studied all cases up to $k=7$ and found the telescopic cancellation. Consider the $k=7$ case. In order to simplify the discussion, let us introduce another variable called $z$ to write the coordinate of the $i^{\rm th}$ point as $(z^{i-1},\w_1^{i-1},\w_2^{i-1},\ldots ,\w_{k-1}^{i-1})$. Let us study the scattering equation  
	\be
	\frac{\partial {\cal S}_{7,\infty }}{\partial z} = 0. 
	\ee
	The left hand since can be computed combining the contributions from each of the seven pairs. The result of the simplification of each pair is proportional to
	\be 
	\{ s_1 z^5,-s_1 z^5-s_2z^4,s_2z^4+s_3z^3,-s_3z^3-s_4z^2,s_4z^2+s_5z,-s_5z-s_6,s_6\}
	\ee
	where $s_m$ is the elementary symmetric polynomial of degree $m$ in $\{\w_1,\w_2,\ldots, \w_6\}$. The factor we have omitted depends on $i$ but since it is a common factor it is not relevant for the computation. Adding the terms shows that the cancellation is telescopic as expected.
	
	\subsection{Finite Chain}\label{sec:finite chain}
	
	We are interested making the infinite chain periodic with period $n$. Therefore we must impose that the $\mathbb{CP}^{k-1}$ value assigned to the $i^{\rm th}$ site is the same as that assigned to any $j\in \mathbb{Z}$ such that $j = i \mod n$. This has the effect of making the infinite chain equivalent to a chain on a circle with $n$ sites.
	
	It is clear that by requiring every entry of $\{ \w_1,\w_2,\ldots ,\w_{k-1} \}$ to be an $n$-root of unity the chain becomes periodic as desired. This leads to the following proposition.
	
	\begin{prop}\label{solFinite}
		Any $(k-1)$-tuple, $\{ \w_1,\w_2,\ldots ,\w_{k-1}\}$, of non-zero and distinct complex numbers, such that $\w_a^n= 1$ defines a critical point of $\spk$ given by $x_a^{(i)} = \w_a^{i-1}$ as long as none of the minors entering in $\spk$ vanishes.
	\end{prop}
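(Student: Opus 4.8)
The plan is to deduce Proposition~\ref{solFinite} from Proposition~\ref{sol} by a ``folding'' argument: near any single site the finite potential $\spk$ is literally the same rational expression as the infinite potential $\mathcal{S}_{k,\infty}$, and the hypothesis $\w_a^n=1$ is precisely what makes the periodic configuration $x_a^{(i)}=\w_a^{i-1}$ compatible with the cyclic identification of sites $i\sim i+n$. As a preliminary reduction, note that $\spk$ is a sum of logarithms of ratios of $k\times k$ minors, hence is invariant under the residual $SL(k)$ acting on the inhomogeneous coordinates $(1,x_1^{(i)},\ldots,x_{k-1}^{(i)})$ (the torus $(\mathbb{C}^\ast)^n$ having been spent in passing to inhomogeneous coordinates); since these $(k-1)n$ coordinates form a chart of $X(k,n)$, it suffices to show that every partial derivative $\partial\spk/\partial x_a^{(i)}$ vanishes at $x_a^{(i)}=\w_a^{i-1}$, for all $a\in\{1,\ldots,k-1\}$ and all sites $i$.

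Fix such an $a$ and $i$. The summand of $\spk$ indexed by $j$ involves only the $k+1$ cyclically consecutive columns $\{j,j+1,\ldots,j+k\}$, so the variable $x_a^{(i)}$ occurs exactly in the summands with $j\equiv i-k,\,i-k+1,\,\ldots,\,i \pmod n$. The hypothesis that no minor entering $\spk$ vanishes forces $n\geq k+1$ (otherwise $p_{i,i+1,\ldots,i+k-2,i+k}$ has two equal columns and vanishes), and then these $k+1$ residues are pairwise distinct and the $k+1$ column labels appearing in each such summand are pairwise distinct as well. Consequently the list of summands contributing to $\partial\spk/\partial x_a^{(i)}$, together with the slot occupied by column $i$ inside each of them, is in canonical bijection with the list of summands contributing to $\partial\mathcal{S}_{k,\infty}/\partial x_a^{(i)}$---the latter being a finite sum, hence well defined even though $\mathcal{S}_{k,\infty}$ is only a formal infinite series.

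It remains to compare the two derivatives term by term on the periodic configuration. Writing each minor with its columns listed in the given (possibly wrapping) cyclic order, the $c$-th column (read cyclically) of the $k\times n$ matrix is $\bigl(1,\w_1^{\bar c-1},\ldots,\w_{k-1}^{\bar c-1}\bigr)^T$, where $\bar c$ is the column label in $\{1,\ldots,n\}$ congruent to $c$ modulo $n$; by $\w_a^n=1$ this equals $\bigl(1,\w_1^{c-1},\ldots,\w_{k-1}^{c-1}\bigr)^T$, which is exactly the $c$-th column of the infinite chain at the configuration of Proposition~\ref{sol}. Hence every minor appearing in a relevant summand of $\spk$ equals the corresponding minor of $\mathcal{S}_{k,\infty}$, and therefore so does the derivative of each corresponding $\log$-ratio with respect to $x_a^{(i)}$; summing the $k+1$ matching terms gives
\begin{equation}
\left.\frac{\partial\spk}{\partial x_a^{(i)}}\right|_{x_a^{(i)}=\w_a^{i-1}}
=\left.\frac{\partial\mathcal{S}_{k,\infty}}{\partial x_a^{(i)}}\right|_{x_a^{(i)}=\w_a^{i-1}}=0,
\end{equation}
the final equality being Proposition~\ref{sol}. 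Letting $a$ and $i$ range over all admissible values proves the claim.

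I expect the only genuine friction to be bookkeeping: making precise the notion of ``the slot occupied by column $i$'' so that the term-by-term comparison of the finite and infinite chains is literal, and checking that the sign convention for minors with wrapped column lists coincides with the one implicitly used in Proposition~\ref{sol}, so that no stray signs survive the comparison. Neither is a real obstacle; once the local identification of the two chains is set up, Proposition~\ref{solFinite} follows immediately from the already-established infinite-chain computation.
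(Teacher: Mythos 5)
Your folding argument reproduces the paper's informal Section 3.2 derivation: the $k+1$ summands contributing to each $\partial\spk/\partial x_a^{(i)}$ are identified term by term with those for $\mathcal{S}_{k,\infty}$, using $\omega_a^n=1$ to make the periodic extension literal, and the conclusion is read off from Proposition~\ref{sol}. That reduction is correct; the sign worry you flag at the end is harmless, since the derivative is of a logarithm of a ratio, so any overall sign a minor picks up from reordering its columns cancels.

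The caveat is that your final step cites Proposition~\ref{sol}, the infinite-chain statement, as established. The paper only verifies that proposition by a telescopic-cancellation calculation for $k<8$ and explicitly says Section 3 is meant as an ``intuitive reason,'' postponing the formal argument to Section~\ref{sec: superpotential PK potential equivalence}. The formal proof there takes a genuinely different route: it rewrites the critical-point equations for $\spk$ as those of the Marsh--Rietsch mirror superpotential $\mathcal{F}_q$ of $G(n-k,n)$ after ``integrating out'' the torus scales $t_a$, invokes Karp's theorem that critical points of $\mathcal{F}_q$ are precisely the cyclic fixed points, and interprets your non-vanishing hypothesis on the minors $p_{i,\ldots,i+k-2,i+k}$ as the condition $\Gamma\neq 0$ needed for those critical points to descend to $X(k,n)$. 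So your proposal is valid modulo Proposition~\ref{sol}, but it is not a self-contained proof of the statement for general $k$; to close the gap one must either supply a general-$k$ proof of Proposition~\ref{sol} (e.g.\ by establishing the telescopic cancellation in closed form) or follow the superpotential route of Section~\ref{sec: superpotential PK potential equivalence}.
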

	
	This result is still not satisfactory as the potential function $\spk$ is defined on the configuration space $\overline{X}(k,n)$ and distinct choices can lead to the same point in $\overline{X}(k,n)$. Moreover, these are clearly the fixed points of the cyclic action defined in section 2. This leads to the following refined statement, which is our main result.
	
	\begin{thm}\label{solFinite}
		Any fixed point of the cyclic action on $\overline{X}(k,n)$ defines a critical point of $\spk$ as long as none of the minors entering in $\spk$ vanishes.
	\end{thm}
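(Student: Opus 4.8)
The plan is to deduce the statement by combining the finite-chain analysis of the previous subsection with the classification of cyclic fixed points established in Section~\ref{sec: cyclic shift}. Beyond those two ingredients the only real work is bookkeeping: the finite-chain Proposition produces a critical point out of a $(k-1)$-tuple of roots of unity \emph{together with} one explicit matrix representative, whereas the present theorem is phrased intrinsically for a point of $\overline{X}(k,n)$, and by \eqref{clu} several distinct tuples may represent the same point.

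First I would use Section~\ref{sec: cyclic shift}: every cyclic fixed point $[g]\in\overline{X}(k,n)$ has a matrix representative whose $i$-th column equals $(1,\w_1^{i-1},\dots,\w_{k-1}^{i-1})^{T}$ for distinct $n$-th roots of unity $\w_1,\dots,\w_{k-1}$, none equal to $1$. In particular one row of this representative is identically $1$, so the inhomogeneous chart in which the first homogeneous coordinate of every point is normalized to $1$ is available at $[g]$, and there $[g]$ has coordinates $x_a^{(i)}=\w_a^{i-1}$. Next I would record that $\spk$ is invariant under the left $\GL(k)$-action (each summand is a ratio of two $k\times k$ minors, hence picks up $\det g/\det g=1$) and under the column torus $(\mathbb{C}^\ast)^{n}$ (rescaling column $j$ by $t_j$ multiplies the argument of the $i$-th summand by $t_{i+k-1}/t_{i+k}$, and $\sum_{i}\bigl(\log t_{i+k-1}-\log t_{i+k}\bigr)=0$ telescopes modulo $n$). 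Consequently $\spk$ descends to a holomorphic function on the open locus $U\subseteq\overline{X}(k,n)$ where all the minors $p_{i,i+1,\dots,i+k-1}$ and $p_{i,i+1,\dots,i+k-2,i+k}$ occurring in it are nonzero; and, because $\spk$ is gauge invariant, a point of $U$ is critical for $\spk$ precisely when all partial derivatives of $\spk$ vanish after any convenient partial gauge fixing --- in particular in the inhomogeneous chart above.

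The hypothesis of the theorem is exactly that those minors do not vanish at $[g]$, which is also the hypothesis of the finite-chain Proposition; applying it --- equivalently, applying Proposition~\ref{sol} together with the periodicity $\w_a^{n}=1$ that turns $\mathcal{S}_{k,\infty}$ into $\spk$ --- yields $\partial\spk/\partial x_a^{(i)}=0$ at $x_a^{(i)}=\w_a^{i-1}$. By the previous paragraph $[g]$ is then a critical point of $\spk$ on $U$, hence on $\overline{X}(k,n)$; since this conclusion is intrinsic it is automatically consistent with the $(k-1)$-fold redundancy \eqref{clu} among representative tuples, so it does not matter which tuple we picked.

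The step I expect to demand the most care is the descent argument: $\overline{X}(k,n)$ is only given a set-theoretic description in this paper, and some cyclic fixed points genuinely lie in $\overline{X}(k,n)\setminus X(k,n)$, because minors \emph{other} than those appearing in $\spk$ may vanish there. One therefore has to check (i) that the inhomogeneous chart really is available at $[g]$ --- it is, precisely because the all-ones row can be moved to the first position --- and (ii) that $\spk$, being a ratio of those particular minors, is a genuinely smooth function on the open set $U$, so that ``critical point'' is meaningful there irrespective of whatever singular behaviour $\overline{X}(k,n)$ may have elsewhere. A secondary caveat worth flagging: the telescoping cancellation underlying Proposition~\ref{sol}, and hence the finite-chain Proposition, was verified directly only for $k\le 7$; the uniform-in-$k$ statement, as well as the converse that these points exhaust the critical points of $\spk$, is what Section~\ref{sec: superpotential PK potential equivalence} establishes via the mirror superpotential of $G(n-k,n)$.
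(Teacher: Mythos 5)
Your plan takes a genuinely different route from the paper's: you propagate Proposition~\ref{sol} (the infinite-chain telescoping identity) through periodicity and then do the bookkeeping to translate the resulting tuple of roots of unity into the intrinsic statement about a point of $\overline{X}(k,n)$. The paper instead goes entirely through the mirror superpotential: it proves Lemma~4.1 (critical points of $\mathcal{F}_q$ on $G(k,n)$ with $\Gamma\neq 0$ descend to critical points of $\spk$ on $X(k,n)$ by ``integrating out'' the torus scales $t_a$), invokes Karp's theorem that the critical points of $\mathcal{F}_q$ are precisely the fixed points of the $t$-deformed cyclic shift $\sigma_t$, and then identifies the vanishing of $\Gamma$ with the vanishing of the minors $p_{a,a+1,\dots,a+k-2,a+k}$. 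Your bookkeeping steps — the availability of the inhomogeneous chart at $[g]$, the $\GL(k)$- and torus-invariance of $\spk$, and the irrelevance of which tuple in the $\GL(k)$-orbit \eqref{clu} is chosen — are correct and are things the paper's proof glosses over.

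However, the caveat you flag at the end is not secondary — it is the gap. The paper explicitly states that Section~\ref{sec: critical points} only gives a ``physically intuitive reason'' for the claim, that the telescoping cancellation behind Proposition~\ref{sol} was verified case-by-case only for $k<8$, and that ``a formal proof requires making a connection to the superpotential of the mirror of the Grassmannian $G(n-k,n)$.'' Your argument has Proposition~\ref{sol} as its load-bearing step, so as written it establishes Theorem~\ref{solFinite} only for $k\le 7$. To make your route work for general $k$ you would have to supply a proof of the telescoping identity in full generality, at which point you would have an alternative, more elementary proof of the theorem; as it stands, you have correctly identified the structure of such a proof, but the key analytic lemma it rests on is precisely the piece the paper declined to prove directly and routed around via Karp's theorem.
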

	See the end of Section \ref{sec: superpotential PK potential equivalence} for the proof of Theorem \ref{solFinite}.

	In order to compute the number of solutions to the scattering equations it is useful to find a simple criterion to find out the fixed points that make at least one of the minors in $\spk$ vanish and the remove them. 
	
	Computing minors of the form $p_{i,i+1,\ldots ,i+k-2,i+k-1}$ one discovers that they cannot vanish if all roots of unity chosen are distinct. On the other hand $p_{i,i+1,\ldots ,i+k-2,i+k}$ vanishes non-trivially if and only if  
	\be\label{remove}
	1+\w_1+\w_2+\ldots +\w_{k-1} = 0.
	\ee
	This means that fixed points which satisfy \eqref{remove} are not solutions of the scattering equations and must be removed.
	
	The number of solutions, ${\cal N}_{k,n}$, is a very interesting function that depends on the factorization properties of $k$ and $n$. We have not been able to construct the function explicitly but in Section \ref{sec:enumeration cyclic fixed points} we prove an upper bound given by the number of binary Lyndon words.

	\section{Relation to Mirror Symmetry Superpotential}\label{sec: superpotential PK potential equivalence}
	
	In this section we prove the result stated in Theorem \ref{solFinite} by making a connection between the equations that determine the critical points of the CEGM potential, $\spk$, and those of the superpotential, ${\cal F}_q$, in the theory which is the mirror of the Grassmannian $G(n-k,n)$. The precise form we use is that introduced by Marsh and Rietsch in \cite{marsh2020b} as a rational function of the Plucker coordinates of $G(k,n)$.
	
	In \cite{karp2019moment} Karp proved that fixed points of the Grassmannian $G(k,n)$ under a certain cyclic shift map are the critical points of a superpotential ${\cal F}_q$. 
	
	We prove that by treating $G(k,n)$ as a torus fibration over $X(k,n)$, which is possible at least locally, and ``integrating out" the fields that control the scale of each point one finds that the critical points of ${\cal F}_q$ contain those of the  potential on planar kinematics, $\spk$.   
	
	Let $\Delta_{a_1,a_2,\ldots ,a_k}$ denote the minors of the $k\times n$ matrix representative of a point in $G(k,n)$, i.e. the Plucker coordinates.
	
	The mirror symmetry superpotential introduced by  Marsh and Rietsch \cite{marsh2020b} is
	\be
	{\cal F}_q := \sum_{i=1,\, i\neq n-k}^{n}\frac{\Delta_{i,i+1,\ldots, i+(k-2),i+k}}{\Delta_{i,i+1,\ldots, i+(k-2),i+(k-1)}} + q \frac{\Delta_{n-k,n-k+1,\ldots,n-1,1}}{\Delta_{n-k,n-k+1,\ldots, n-1,n}}.
	\ee
	This superpotential depends on a parameter $q$. In fact, one could include a parameter $q_i$ for each term but using a simple rescaling of the fields all parameters can be removed except for one, which by convention is chosen to be $q_{n-k}:=q$. In order to simply the notation in the computations below it is actually useful to keep the other parameters and write
	\be
	{\cal F} := \sum_{i=1}^{n}q_i\frac{\Delta_{i,i+1,\ldots, i+(k-2),i+k}}{\Delta_{i,i+1,\ldots, i+(k-2),i+(k-1)}}.
	\ee

	In order to proceed let us choose a chart of $G(k,n)$ parameterized as 
	\be\label{chartG}
	\left(
	\begin{array}{cccccccc}
		t_1 & 0  & \cdots & 0 & t_{k+1} & t_{k+2} & \ldots & t_{n}  \\
		0      &    t_2    & \cdots & 0 & t_{k+1} & t_{k+2}x_{1,1} & \ldots & t_{n}x_{1,n-k-1}  \\
		\vdots & \vdots  & \cdots & \vdots & \vdots &\vdots & & \vdots   \\
		0 & 0  &  \ddots & t_k & t_{k+1} & t_{k+2}x_{k-1,1}  & \ldots & t_{n}x_{k-1,n-k-1}
	\end{array}
	\right) . 
	\ee
	As usual, other charts might be necessary to cover all points of interest but the argument can be carried out in the exactly the same way. 
	
	The Plucker coordinates of $G(k,n)$ can now be written as
	\be
	\Delta_{a_1,a_2,\ldots ,a_k} = t_{a_1}t_{a_2}\ldots t_{a_k}p_{a_1,a_2,\ldots ,a_k}
	\ee
	where $p_{a_1,a_2,\ldots ,a_k}$ denote the minors of a matrix representative of a point in $\overline{X}(k,n)$, i.e. the minors of 
	\be\label{coord}
	\left(
	\begin{array}{cccccccc}
		1 & 0  & \cdots & 0 & 1 & 1 & \ldots & 1  \\
		0      &    1    & \cdots & 0 & 1 & x_{1,1} & \ldots & x_{1,n-k-1}  \\
		\vdots & \vdots  & \cdots & \vdots & \vdots &\vdots & & \vdots   \\
		0 & 0  &  \ddots & 1 & 1 & x_{k-1,1}  & \ldots & x_{k-1,n-k-1}
	\end{array}
	\right) . 
	\ee
	
	In this chart the superpotential becomes 
	\be
	{\cal F} = \sum_{i=1}^{n}q_i\frac{t_{i+k}\, p_{i,i+1,\ldots, i+(k-2),i+k}}{t_{i+(k-1)}\, p_{i,i+1,\ldots, i+(k-2),i+(k-1)}}.
	\ee
	Differentiating with respect to $t_a$ gives
	\be
	\frac{\partial {\cal F}}{\partial t_a} = -q_{a-k}\frac{1}{t_{a-1}}\frac{p_{\ldots,a-3, a-2,a}}{p_{\ldots ,a-3,a-2,a-1} }+q_{a-k+1}\frac{t_{a+1}}{t^2_{a}}\frac{p_{\ldots,a-2, a-1,a+1}}{p_{\ldots ,a-2,a-1,a}}.
	\ee
	Setting this to zero implies that 
	\be
	q_{a-k}\frac{t_a}{t_{a-1}}\frac{p_{\ldots,a-3, a-2,a}}{p_{\ldots ,a-3,a-2,a-1} } = q_{a-k+1}\frac{t_{a+1}}{t_{a}}\frac{p_{\ldots,a-2, a-1,a+1}}{p_{\ldots ,a-2,a-1,a}}.
	\ee
	and therefore the following quantity is independent of $a$,
	\be\label{eq:Gamma}
	\Gamma := q_{a-k}\frac{t_a}{t_{a-1}}\frac{p_{\ldots,a-3, a-2,a}}{p_{\ldots ,a-3,a-2,a-1}}.
	\ee
	Let us now compute the derivative of ${\cal F}$ with respect to any variable that appears in the minors $p_{b_1,b_2,\ldots ,b_k}$. Let us denote such a generic variables as $z_a$, then
	\be\label{hepo}
	\frac{\partial {\cal F}}{\partial z_a} =  q_a\frac{t_{a+k}}{t_{a+(k-1)}}\frac{\partial }{\partial z_a}\left(\frac{p_{a,a+1,\ldots a+(k-2),a+k}}{p_{a,a+1,\ldots ,a+(k-2),a+(k-1)}}\right) + \ldots 
	\ee
	Critical points are found by setting this to zero and provided $\Gamma$ does not vanish the equations are equivalent to 
	\be\label{deriv}
	\frac{1
	}{\Gamma}\frac{\partial {\cal F}}{\partial z_a} = 0 \qquad \forall\, a.
	\ee
	
	Using a form of $\Gamma$ appropriate to each term one can turn each term in the sum into a logarithmic derivative. For example, consider the contribution of the first term in \eqref{hepo} to the lhs of the equation in \eqref{deriv},
	\be
	\frac{q_a
	}{\Gamma}\frac{t_{a+k}}{t_{a+(k-1)}}\frac{\partial }{\partial z_a}\left(\frac{p_{a,a+1,\ldots a+(k-2),a+k}}{p_{a,a+1,\ldots ,a+(k-2),a+(k-1)}}\right).
	\ee
	Using 
	\be
	\Gamma = q_a\frac{t_{a+k}}{t_{a+(k-1)}} \frac{p_{a,a+1,\ldots a+(k-2),a+k}}{p_{a,a+1,\ldots ,a+(k-2),a+(k-1)}}
	\ee
	the expression simplifies to
	\be
	\frac{p_{a,a+1,\ldots ,a+(k-2),a+(k-1)}}{p_{a,a+1,\ldots a+(k-2),a+k}} \frac{\partial }{\partial z_a}\left(\frac{p_{a,a+1,\ldots a+(k-2),a+k}}{p_{a,a+1,\ldots ,a+(k-2),a+(k-1)}}\right) 
	\ee
	which can be written as
	\be
	\frac{\partial }{\partial z_a} \log\left(\frac{p_{a,a+1,\ldots a+(k-2),a+k}}{p_{a,a+1,\ldots ,a+(k-2),a+(k-1)}}\right).
	\ee
	Putting all together the equations for the superpotential \eqref{deriv} can be written as
	\be
	\frac{\partial }{\partial z_a} \sum_{i=1}^n\log\left(\frac{p_{i,i+1,\ldots i+(k-2),i+k}}{p_{i,i+1,\ldots ,i+(k-2),i+(k-1)}}\right) = 0 \qquad \forall\, a
	\ee
	which coincide with the scattering equations on planar kinematics.
	
	Note that the derivation is only valid if $\Gamma$ is not zero and so we have proven the following.
	\begin{lem}
		Any critical point of ${\cal F}_q$ for which $\Gamma$ does not vanish descends to a critical point of $\spk$ evaluated on planar kinematics. 
	\end{lem}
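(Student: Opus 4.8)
The plan is to exploit the presentation of $G(k,n)$, in the chart \eqref{chartG}, as a torus fibration over $\overline{X}(k,n)$: the fiber coordinates are $t_1,\dots,t_n$ and the base coordinates are the inhomogeneous entries $z_a$ of the matrix \eqref{coord}. Writing each Plücker coordinate as $\Delta_{a_1,\dots,a_k}=t_{a_1}\cdots t_{a_k}\,p_{a_1,\dots,a_k}$, the superpotential becomes a sum of terms $q_i\,(t_{i+k}/t_{i+(k-1)})\,(p_{i,\dots,i+(k-2),i+k}/p_{i,\dots,i+(k-2),i+(k-1)})$, and the strategy is to show that its Euler--Lagrange equations split cleanly into the fiber equations $\partial{\cal F}/\partial t_a=0$ and the base equations $\partial{\cal F}/\partial z_a=0$, and that the latter, once the fiber data is eliminated, reduce to the scattering equations of $\spk$.

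First I would treat the fiber directions: a direct differentiation shows that $\partial{\cal F}/\partial t_a=0$ for all $a$ is equivalent to the single quantity $\Gamma$ of \eqref{eq:Gamma} being independent of $a$, and this is precisely where the hypothesis $\Gamma\neq 0$ enters and must be recorded. Next I would differentiate in a base direction $z_a$. Term-by-term differentiation gives a sum over those terms of ${\cal F}$ whose minors involve the relevant column; for the $i$-th such term, multiplying by $1/\Gamma$ and using the incarnation $\Gamma=q_i\,(t_{i+k}/t_{i+(k-1)})\,(p_{i,\dots,i+k}/p_{i,\dots,i+(k-1)})$ converts the prefactor $q_i(t_{i+k}/t_{i+(k-1)})/\Gamma$ into $p_{i,\dots,i+(k-1)}/p_{i,\dots,i+k}$, so the term collapses to $\partial_{z_a}\log(p_{i,\dots,i+k}/p_{i,\dots,i+(k-1)})$. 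Summing over $i$, and using $\Gamma\neq 0$, the base equations become
\be
\frac{\partial}{\partial z_a}\sum_{i=1}^n\log\!\left(\frac{p_{i,i+1,\dots,i+(k-2),i+k}}{p_{i,i+1,\dots,i+(k-2),i+(k-1)}}\right)=0\qquad\forall\,a,
\ee
which, since $\spk$ is the negative of this sum, are exactly the critical-point equations of $\spk$. Hence a critical point of ${\cal F}_q$ with $\Gamma\neq 0$, after forgetting the torus coordinates $t_a$, lands on a point of $\overline{X}(k,n)$ satisfying the $\spk$ scattering equations.

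Two points need care. The chart \eqref{chartG} does not cover all of $G(k,n)$, so for a critical point lying outside it one runs the identical computation in another standard chart, the combinatorics being unchanged; and the single-parameter ${\cal F}_q$ of the statement must be matched with the multi-parameter ${\cal F}$, but the rescaling of the $t_a$ that eliminates all parameters except $q_{n-k}$ acts within the torus fiber and so does not move the underlying point of $\overline{X}(k,n)$. The step I expect to be the main obstacle is the simultaneous collapse of every term of $\partial{\cal F}/\partial z_a$ to a logarithmic derivative: one must verify that the single $a$-independent quantity $\Gamma$ really is the correct common factor for all terms at once — i.e. that the ``form of $\Gamma$ appropriate to each term'' is genuinely $\Gamma$ — since it is exactly this matching between the fiber equations and the factors appearing in the base equations that makes the reduction to $\spk$ work.
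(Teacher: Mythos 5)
Your proposal reproduces the paper's argument essentially verbatim: the same chart \eqref{chartG}, the same factorization $\Delta_{a_1,\ldots,a_k}=t_{a_1}\cdots t_{a_k}\,p_{a_1,\ldots,a_k}$, the same observation that the $t_a$-equations make $\Gamma$ an $a$-independent constant, the same division by $\Gamma$ using its index-shifted form to collapse each term to a logarithmic derivative, and the same caveats about the chart not being global and the parameters $q_i$ living in the torus fiber. The one step you flag as a potential obstacle — that the $a$-independent $\Gamma$ really is the common factor to use term-by-term — is in fact immediate once the $t_a$-equations are in place, since any index-shift of the expression in \eqref{eq:Gamma} gives the same constant; this is precisely what the paper means by "a form of $\Gamma$ appropriate to each term."
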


	Let us now discuss the critical points of ${\cal F}_q$.
	\begin{prop}[\cite{karp2019moment}]
		For $t\in \mathbb{C}^*$, the critical points of ${\cal F}_q$ on $G(k,n)$ at $q = t$ are precisely the fixed points of the t-deformed cyclic shift map $\sigma_t$.
	\end{prop}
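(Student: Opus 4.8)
The plan is to establish the two inclusions separately: the inclusion ``fixed points $\subseteq$ critical points'' locally from a symmetry, the reverse inclusion by a dimension count, and then to remove the genericity assumption by a closedness argument. Write $V(t):=\{\zeta\in\mathbb{C}:\zeta^{n}=(-1)^{k-1}t\}$, a set of $n$ \emph{distinct} numbers for every $t\in\mathbb{C}^{*}$. The first step is the basic observation that \emph{$\sigma_t$ cyclically permutes the $n$ summands of ${\cal F}$, and the choice $q=t$ exactly compensates the twist in the wraparound summand, so that ${\cal F}_t\circ\sigma_t={\cal F}_t$ as rational functions on $G(k,n)$.} This is a direct index computation: $\sigma_t$ sends $\Delta_{j_1,\dots,j_k}$ to $\Delta_{j_1-1,\dots,j_k-1}$ up to a cyclic-reordering sign, with an extra factor $(-1)^{k-1}t$ whenever the label $1$ wraps to $n$; tracking these factors through the numerator/denominator pairs of ${\cal F}$ shows the net effect is to multiply the single ``seam'' summand by $t$, which is undone by $q=t$. (This is the design principle behind the Marsh--Rietsch superpotential.)

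Granting this, \emph{every fixed point of $\sigma_t$ is a critical point of ${\cal F}_t$.} Up to $\GL_k$ a fixed point $p$ has a generalized Vandermonde representative with columns $(\lambda_1^{i-1},\dots,\lambda_k^{i-1})^{T}$ for some $k$ distinct $\lambda_a\in V(t)$; its row span $V\subset\mathbb{C}^{n}$ is spanned by $k$ of the $n$ eigenvectors of the shift operator on $\mathbb{C}^{n}$, so $d\sigma_t|_p$ acts on $T_pG(k,n)\cong\mathrm{Hom}(V,\mathbb{C}^{n}/V)$ with eigenvalues $\lambda_j/\lambda_i$ ($i\le k<j$), all different from $1$. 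Hence $d\sigma_t|_p-\mathrm{Id}$ is invertible, and differentiating ${\cal F}_t={\cal F}_t\circ\sigma_t$ at $p$ gives $d{\cal F}_t|_p\circ(d\sigma_t|_p-\mathrm{Id})=0$, i.e.\ $d{\cal F}_t|_p=0$. The only caveat is that $p$ must lie in the domain of ${\cal F}_t$; since $\Delta_{i,\dots,i+k-2,i+k}$ evaluated at $p$ is a Vandermonde times $\sum_a\lambda_a$, this fails only for the finitely many $t$ for which some $k$-subset of $V(t)$ sums to zero --- exactly the removal condition \eqref{remove} --- so the inclusion holds for all other $t$.

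For the reverse inclusion I would count. The map $\sigma_t$ has exactly $\binom{n}{k}$ fixed points, one for each $k$-subset of $V(t)$. On the other hand, by the Marsh--Rietsch mirror theorem the coordinate ring of the critical scheme of ${\cal F}_q$ at $q=t$ is the specialization $QH^{*}(G(k,n))|_{q=t}$, of dimension $\binom{n}{k}$; for generic $t$ this ring is semisimple (semisimplicity of the quantum cohomology of Grassmannians), so ${\cal F}_t$ has exactly $\binom{n}{k}$ distinct nondegenerate critical points. Combined with the previous step, a pigeonhole argument forces the two $\binom{n}{k}$-element sets to coincide for generic $t$. To reach all $t\in\mathbb{C}^{*}$ I would argue by closedness: the fixed-point locus and the critical-point locus are closed subvarieties of $G(k,n)\times\mathbb{C}^{*}_{t}$ that agree over a dense open set, hence everywhere, with a separate limiting analysis at the exceptional values of $t$ noted above.

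I expect the \textbf{main obstacle} to be the reverse inclusion: either importing the semisimplicity of $QH^{*}(G(k,n))$ at generic $q$, or --- if one wants a self-contained proof valid for every $t\neq 0$ --- replacing it by a direct computation that the equations $\partial{\cal F}_q=0$ force the generalized-Vandermonde form. The latter is the ${\cal F}_q$-analogue of the telescoping cancellation already carried out for $\spk$ in Section \ref{sec: critical points}: once the quantity $\Gamma$ of \eqref{eq:Gamma} is known to be independent of $a$, the ratios $t_{a+1}/t_a$ and the cross-ratios of the configuration are forced to be geometric in $a$, which is precisely the statement that the point is a cyclic fixed point. Handling the exceptional $t$ and matching conventions with \cite{karp2019moment} would then be the remaining bookkeeping.
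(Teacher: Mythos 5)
This proposition is cited verbatim from Karp's paper \cite{karp2019moment}; the present paper does not re-prove it, so there is no internal proof to compare against, and your proposal has to be judged on its own terms. The overall architecture (equivariance $\Rightarrow$ forward inclusion; a length/count argument $\Rightarrow$ reverse inclusion) is sound, but there are two concrete gaps.

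First, the caveat you raise about $p$ possibly lying outside the domain of ${\cal F}_t$ rests on a misreading of the superpotential: the minor $\Delta_{i,\ldots,i+k-2,i+k}$ that can vanish when $\sum_a\lambda_a=0$ sits in the \emph{numerator}, not the denominator, of each summand of ${\cal F}_q$. The denominators $\Delta_{i,i+1,\ldots,i+k-1}$ are generalized Vandermonde determinants in the $k$ distinct $\lambda_a$, which never vanish at a cyclic fixed point. Hence ${\cal F}_t$ is defined at every fixed point for every $t\in\mathbb{C}^*$, and the forward inclusion via $d{\cal F}_t|_p\circ\left(d\sigma_t|_p-\mathrm{Id}\right)=0$ already holds unconditionally; no exceptional set needs to be excluded. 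The vanishing of that numerator becomes relevant only later, when one descends from $G(k,n)$ to $X(k,n)$ — this is exactly the condition $\Gamma\neq0$ of Equation \eqref{eq:Gamma} and the removal condition \eqref{remove} in Section \ref{sec: critical points} — not at the level of the proposition you are proving.

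Second, the closedness argument for exceptional $t$ does not go through as stated: two closed subschemes of $G(k,n)\times\mathbb{C}^*_t$ that agree over a dense open subset of the $t$-line need not agree everywhere, since the critical-point locus could in principle acquire an extra component concentrated over finitely many values of $t$. The clean repair — which also lets you dispense with generic semisimplicity altogether — is to use that, once the Marsh--Rietsch isomorphism between the Jacobi ring of ${\cal F}_q$ and $QH^*(G(k,n))\otimes\mathbb{C}\lbrack q^{\pm}\rbrack$ is granted, the critical scheme is \emph{flat} of constant length $\binom{n}{k}$ over $\mathbb{C}^*_t$. For every $t\neq0$ the $\binom{n}{k}$ fixed points of $\sigma_t$ are pairwise distinct (the $n$ solutions of $\zeta^n=(-1)^{k-1}t$ are distinct) and, by the forward inclusion, all lie in the critical scheme; a length comparison then forces the critical scheme to be reduced and equal to the fixed-point set for \emph{every} $t\in\mathbb{C}^*$, in one stroke. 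With these two corrections the proof is complete modulo the equivariance identity ${\cal F}_t\circ\sigma_t={\cal F}_t$ (which deserves a careful index-tracking check of the seam term, as your conventions for the position of the $q$-twisted summand and the direction of the cyclic shift must be matched to Karp's) and the Marsh--Rietsch mirror theorem, which is indeed the heaviest input your route relies on.
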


	Here the t-deformed cyclic shift map, $\sigma_t$, is defined as a map $\mathbb{C}^n \to \mathbb{C}^n$ which acts on $G(k,n)$ by acting on each of the rows of a $k\times n$ matrix representative. The precise definition of $\sigma_t$ is the following.
	\begin{defn}[\cite{karp2019moment}]\label{karpSigma}
		For $t\in \mathbb{C}^*$, define the t-deformed (left) cyclic shift map $\sigma_t\in GL(n,\mathbb{C})$ by
		$$ \sigma_t(v) = (v_2,v_3,\cdots ,v_n,(-1)^{k-1}t v_1)\quad {\rm for} \quad v=(v_1,v_2,\cdots ,v_n) \in \mathbb{C}^{n}. $$
	\end{defn}
	%
	% \subsection{Cyclic Fixed}
	Finally, we are ready to prove our Theorem \ref{solFinite}.  For convenience we state it again.
	\begin{thm}%\label{solFinite}
		Any fixed point of the cyclic action on $\overline{X}(k,n)$ defines a critical point of $\spk$ as long as none of the minors entering in $\spk$ vanishes.
	\end{thm}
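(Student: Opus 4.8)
The plan is to derive the Theorem from the two results already assembled in Section~\ref{sec: superpotential PK potential equivalence}: Karp's identification of the critical points of ${\cal F}_q$ with the fixed points of the deformed cyclic shift $\sigma_t$, and the Lemma that any critical point of ${\cal F}_q$ at which $\Gamma$ does not vanish descends to a critical point of $\spk$ on planar kinematics. So, given a cyclic fixed point $\lbrack g\rbrack\in\overline{X}(k,n)$ none of whose $\spk$-minors vanishes, I would (i) write down an explicit lift $\tilde g\in G(k,n)$ of $\lbrack g\rbrack$; (ii) show that $\tilde g$ is fixed by $\sigma_t$ for the distinguished value $t=(-1)^{k-1}$, so that Karp's Proposition makes $\tilde g$ a critical point of ${\cal F}_q$ at $q=(-1)^{k-1}$; (iii) verify that $\Gamma\neq 0$ at $\tilde g$; and (iv) invoke the Lemma, whose conclusion applied to $\tilde g$ is precisely that $\lbrack g\rbrack$ is a critical point of $\spk$.

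For (i)--(ii): by the analysis of Section~\ref{sec: cyclic shift}, $\lbrack g\rbrack$ has a $k\times n$ matrix representative $M$ whose $a$-th row is the geometric progression $v_a=(1,\omega_a,\omega_a^2,\ldots,\omega_a^{n-1})$ with $\omega_1,\ldots,\omega_k$ distinct $n$-th roots of unity; this $M$ is already a point of $G(k,n)$, and it is our $\tilde g$. Applying the map of Definition~\ref{karpSigma} row by row gives $\sigma_t(v_a)=(\omega_a,\ldots,\omega_a^{n-1},(-1)^{k-1}t\,v_{a,1})=(\omega_a,\ldots,\omega_a^{n-1},(-1)^{k-1}t)$, and since $\omega_a^n=1$ this equals $\omega_a\,v_a$ exactly when $(-1)^{k-1}t=1$, i.e. $t=(-1)^{k-1}$; for that value $\sigma_t$ is the plain cyclic shift and $\sigma_t\cdot M=\mathrm{diag}(\omega_1,\ldots,\omega_k)\,M$, so $\sigma_t$ fixes the point of $G(k,n)$ represented by $M$. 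By Karp's Proposition, $M$ is then a critical point of ${\cal F}_q$ at $q=(-1)^{k-1}$.

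For (iii): the computation preceding the Lemma shows that at a critical point of ${\cal F}_q$ the common value $\Gamma$ of \eqref{eq:Gamma} can, after cancelling the torus monomial $t_b t_{b+1}\cdots t_{b+k-2}$, be rewritten in terms of genuine Plücker coordinates as $\Gamma=q_b\,\Delta_{b,b+1,\ldots,b+k-2,b+k}\big/\Delta_{b,b+1,\ldots,b+k-2,b+k-1}$. Evaluating this at $M$ for an index $b$ with $q_b=1$: the denominator $\Delta_{b,\ldots,b+k-2,b+k-1}$ is a cyclically consecutive minor, equal to $(\omega_1\cdots\omega_k)^{b-1}\prod_{a<a'}(\omega_{a'}-\omega_a)$ and hence nonzero, while the numerator $\Delta_{b,\ldots,b+k-2,b+k}$ is the gap minor, nonzero by the hypothesis of the Theorem (equivalently, because the exclusion \eqref{remove}, $1+\omega_1+\cdots+\omega_{k-1}=0$, is assumed to fail); both are nonzero torus multiples of the minors $p_{b,\ldots,b+k-1}$ and $p_{b,\ldots,b+k-2,b+k}$ that enter $\spk$. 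Therefore $\Gamma(M)\neq 0$, and the Lemma gives that $M$ descends to a critical point of $\spk$ on planar kinematics; that descent is $\lbrack g\rbrack$, which completes the argument.

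I expect the one genuinely delicate point to be the passage between $G(k,n)$ and $\overline{X}(k,n)$. The Lemma is derived in the chart \eqref{chartG}, in which column $k+1$ of the representative \eqref{coord} is normalized to $(1,\ldots,1)^{T}$; for our lift $\tilde g$ this normalization fails in general, since it would require minors of $M$ outside those appearing in $\spk$ (e.g. those controlled by the elementary symmetric functions $e_2,\ldots,e_{k-1}$ of the $\omega_a$) to be nonzero, and the hypothesis of the Theorem constrains only $e_1$. The resolution is that the derivation of the Lemma uses only the structural identity $\Delta_{a_1,\ldots,a_k}=t_{a_1}\cdots t_{a_k}\,p_{a_1,\ldots,a_k}$ together with the fact that the scale parameters $t_i$ and the coordinates on $\overline{X}(k,n)$ form a local coordinate system; this holds in a suitable chart of the same type adapted to $\tilde g$, whose existence follows from the local triviality of $G(k,n)\to\overline{X}(k,n)$ over configurations with no vanishing column (such as $\tilde g$). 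Checking that in such a chart all the $t_i$ are nonzero, and that only the consecutive and gap minors of $\spk$ (nonvanishing by hypothesis, resp. by the Vandermonde computation above) ever enter the argument, is the main thing one must be careful about; the rest is bookkeeping of indices modulo $n$.
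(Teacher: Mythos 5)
Your proof is correct and follows essentially the same route as the paper: lift the cyclic fixed point to $G(k,n)$, observe it is fixed by $\sigma_t$ at $t=(-1)^{k-1}$, invoke Karp's Proposition so it is a critical point of $\mathcal{F}_q$, check that $\Gamma\neq 0$ (via the Plücker form $\Gamma=q_a\,\Delta_{a,\ldots,a+k-2,a+k}/\Delta_{a,\ldots,a+k-2,a+k-1}$ and the theorem's hypothesis), and conclude via the Lemma that it descends to a critical point of $\spk$. The paper's own proof is much terser and leaves the explicit lift, the value $t=(-1)^{k-1}$, the Vandermonde evaluation of the denominator, and the chart caveat implicit; you correctly flag that the specific chart \eqref{chartG} need not contain the lifted representative when some $e_2,\ldots,e_{k-1}$ of the roots vanish, and your resolution (the derivation of the Lemma uses only the structural identity $\Delta = t\cdot p$ and the local product structure of $G(k,n)\to \overline{X}(k,n)$, valid in any chart adapted to a configuration with no vanishing column) matches the paper's own remark that ``other charts might be necessary but the argument can be carried out in exactly the same way.''
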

	
	\begin{proof}
		First note that fixed points of the t-deformed cyclic shift map $\sigma_t$ clearly descend to fixed points of our cyclic action $\rho_n$. Of course, two fixed points in $G(k,n)$ which only differ by a torus action descend to the same fixed point in $X(k,n)$. However, such fixed points only produce solution to the scattering equations if $\Gamma\neq 0$. Since the scale factors $t_a$ do not vanish in any of the cyclic fixed points in $G(k,n)$, the only way $\Gamma$ can vanish is for cyclic fixed points for which $p_{a,a+1,\ldots ,a+k-2,a+k} = 0$. But these are exactly the fixed points excluded in the Proposition.
		
	\end{proof}
	
	\section{Enumeration of Aperiodic Critical Points of the Planar Kinematics Potential Function}\label{sec:enumeration cyclic fixed points}
	In this section, our aim is to give an explicit combinatorial tabulation of the critical points of  $\mathcal{S}^{(PK)}_{k,n}$; we do not completely succeed, but we are able to give a useful constructive upper bound. 
	
	In what follows, it is convenient to regard $\mathcal{S}^{(PK)}_{k,n}$ as a function on the complex Grassmannian that happens to be invariant under not only the action of the torus group $(\mathbb{C}^\ast)^n$ (denoted $\mathbf{T}$ in Section \ref{sec: cyclic shift}), but in fact it is invariant under the action of the semidirect product $ (\mathbb{C}^\ast)^n \rtimes \mathbb{Z}\slash n \hookrightarrow GL(n)$, where the subgroup $(\mathbb{C}^\ast)^n$ acts by scaling the standard basis vectors in $\mathbb{C}^n$ by complex numbers in the standard way as $\lambda \cdot e_j = \lambda_j e_j$, and the subgroup $\mathbb{Z}\slash n$ acts by the cyclic rotation operator $\rho_n(e_j) = e_{j-1}$.  
	Given $2\le k\le n-2$, put $q=\exp(2\pi i/n)$.
	
	Let $T_{n} \simeq\mathbb{Z}\slash n$ be the subgroup of $GL(n)$ embedded into the diagonal as
	$$a\mapsto \text{diag}(1,q^a q^{2a},\ldots, q^{(n-1)a}).$$
	
	Then in particular, $T_n$ acts on the standard basis of $\mathbb{C}^n$ by 
	$$a:e_j \mapsto q^{a(j-1)}e_j.$$

	Denote by $\binom{\lbrack n\rbrack}{k}$ the set of $k$-element subsets of $\lbrack n\rbrack = \{1,\ldots ,n\}$; then the group $\mathbb{Z}\slash n$ acts by $\{j_1,\ldots, j_k\} \mapsto \{j_1+a,\ldots, j_k+a\}$.
	
	\begin{defn}
		We say that a k-element subset $J = \{j_1,\ldots, j_k\} \in \binom{\lbrack n\rbrack}{k}$ is \textit{aperiodic} if its $\mathbb{Z}\slash n$-orbit has exactly $n$ elements,
		$$\left\vert\left\{\{j_1+j,\ldots, j_k+j\}: j\in\mathbb{Z}\slash n \right\} \right\vert = n,$$
		where addition is regarded modulo $n$.
	\end{defn}
	
	Recall that a string $w$ with $k$ ones and $n-k$ zeros is a binary \textit{Lyndon word} if it is the unique lexicographically smallest element among its cyclic rotations.  As it is the unique lexicographically smallest element among its cyclic rotations, it follows that $w$ is different from its cyclic rotations.
	
	Recall that the number of binary Lyndon words with $k$ ones and $n-k$ zeros is equal to 
	\begin{eqnarray}\label{eq:Lyndon word enumeration}
		\mathcal{N}_{k,n} & = & \frac{1}{n}\sum_{d | {\rm gcd}(k,n)}\left( \mu(d)\binom{n/d}{k/d} \right),
	\end{eqnarray}
	see O.E.I.S. number triangle A051168 \cite{oeis}.
	
	Here $\mu(d)$ is the Moebius function,
	$$\mu(d) = 	\begin{cases}
		0, & \text{if $d$ is a product of primes with repeated factors,}\\
		1, & d=1,\\
		(-1)^\ell & \text{if $d$ is a product of $\ell$ distinct primes.}
	\end{cases}$$
	\begin{prop}
		The number of equivalence classes of aperiodic $k$-element subsets of $\lbrack n\rbrack$ modulo $\mathbb{Z}\slash n$, is given by Equation \eqref{eq:Lyndon word enumeration}.
	\end{prop}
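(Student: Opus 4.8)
The plan is to translate the statement into the language of binary strings and then invoke the classical enumeration of Lyndon words. First I would fix the $\mathbb{Z}\slash n$-equivariant bijection
\[
\binom{[n]}{k} \;\longrightarrow\; \{\,w\in\{0,1\}^n : w \text{ has exactly } k \text{ ones}\,\}, \qquad J \mapsto w^J,
\]
where $w^J_i = 1$ if and only if $i\in J$; under this map the action $J\mapsto J+a$ corresponds precisely to the cyclic rotation $\rho_n^a$ acting on strings. Consequently a subset $J$ is aperiodic in the sense of the Definition exactly when its indicator string $w^J$ is aperiodic (primitive), i.e. has minimal rotational period $n$, and two aperiodic subsets lie in the same $\mathbb{Z}\slash n$-orbit if and only if their indicator strings are cyclic rotations of one another.

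Next I would identify equivalence classes with Lyndon words. Every primitive binary string of length $n$ has, among its $n$ distinct cyclic rotations, a unique lexicographically smallest one, which is by definition the Lyndon word representing that rotation class; conversely each binary Lyndon word with $k$ ones and $n-k$ zeros is primitive (being distinct from its rotations) and determines its rotation class. Hence the assignment sending an orbit to the Lyndon rotation of $w^J$ is a well-defined bijection from the set of equivalence classes of aperiodic $k$-element subsets of $[n]$ modulo $\mathbb{Z}\slash n$ onto the set of binary Lyndon words with $k$ ones and $n-k$ zeros, whose cardinality is $\mathcal{N}_{k,n}$ of Equation \eqref{eq:Lyndon word enumeration}. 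This already proves the Proposition once the Lyndon count is granted.

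For completeness I would also record the short Möbius-inversion derivation of the closed form. Let $a(N,K)$ denote the number of primitive binary strings of length $N$ with $K$ ones. An arbitrary binary string of length $n$ with $k$ ones has some minimal period $p\mid n$ and is the $(n/p)$-fold concatenation of a primitive block of length $p$ with $kp/n$ ones; writing $m=n/p$ this forces $m\mid\gcd(n,k)$, and the block is counted by $a(n/m,k/m)$. Summing over $m$ gives $\binom{n}{k}=\sum_{m\mid\gcd(n,k)} a(n/m,k/m)$, so Möbius inversion over the divisor lattice of $\gcd(n,k)$ yields $a(n,k)=\sum_{m\mid\gcd(n,k)} \mu(m)\binom{n/m}{k/m}$. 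Finally, since a primitive string of length $n$ has a rotation orbit of size exactly $n$, the number of orbits — equivalently the number of Lyndon words — is $a(n,k)/n$, which is exactly the right-hand side of Equation \eqref{eq:Lyndon word enumeration}.

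The argument is essentially routine; the only step demanding genuine care is the periodicity bookkeeping, namely confirming that the divisors $m$ contributing to the decomposition of $\binom{n}{k}$ are precisely those dividing $\gcd(n,k)$ rather than merely dividing $n$, and that the Möbius inversion is applied over that divisor lattice. Everything else reduces to the standard dictionary between $k$-subsets of $[n]$, binary necklaces, and Lyndon words, together with the uniqueness of the lexicographically minimal rotation of a primitive word.
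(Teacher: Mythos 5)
Your proof is correct and follows essentially the same route as the paper's: both identify $k$-subsets with their binary indicator strings, observe the $\mathbb{Z}/n$-equivariance of that identification, and pass to lexicographically minimal rotations to land on Lyndon words. You additionally spell out the Möbius-inversion derivation of the closed form, which the paper simply cites; this is a welcome bit of completeness but not a different argument.
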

	
	\begin{proof}
		This is a straightforward consequence of the standard bijection between $\mathbb{Z}\slash n$-orbits of $k$-element subsets of $\lbrack n\rbrack$ and Lyndon words.  For the bijection, one identifies a subset $J = \{j_1,\ldots, j_k\} \in \binom{\lbrack n\rbrack}{k}$ with its indicator function $e_J = \sum_{j\in J}e_j$; then restrict to lexicographically minimal indicator functions.
		
		Here $\mathbb{Z}\slash n$ acts on binary Lyndon words via the $n$-cycle $(12\cdots n)$ on positions, while it acts on aperiodic subsets by permuting index labels.
	\end{proof}
	
	Given $A=\left\{a_1,\ldots, a_k \right\} \in \binom{\lbrack n\rbrack}{k}$, then clearly since the elements $a_1,\ldots, a_k$ are distinct, the $k\times n$ matrix $g_A$, which we define by its entries $x_{i,j} = q^{(j-1)a_i}$, has nonvanishing minor $p_{1,2,\ldots, k-1,k}(g_A)$, (which is the Vandermonde determinant in the entries $q^{a_1},\ldots, q^{a_k}$) so it has rank $k$ and defines an element of $G(k,n)$.
	
	Let us call a cyclic fixed point $g\in G(k,n)$ \textit{aperiodic} if its $T_n$-orbit has exactly $n$ distinct cyclic fixed points.  It follows immediately that the number of $T_n$-orbits of aperiodic cyclic fixed points in $G(k,n)$ is given by $\mathcal{N}_{k,n}$ in Equation \eqref{eq:Lyndon word enumeration}.
	
	In Theorem \ref{thm: Injection}, we show that the minors $p_{i,i+1,\ldots, i+k-2,i+k}$ that appear in the planar kinematics potential function $\mathcal{S}^{(PK)}_{k,n}$ vanish on $T_n$-orbits which are \textit{not} aperiodic; however, there will be defective cyclic fixed points $g\in G(k,n)$ which are aperiodic, but for which we still have $p_{i,i+1,\ldots, i+k-2,i+k}(g) = 0$.  Recall that these minors appeared in the factor $\Gamma$ in Equation \eqref{eq:Gamma}, which was assumed to be nonzero.

	\begin{thm}\label{thm: Injection}
		The set of cyclic fixed points in $X(k,n)$ injects into the set of $T_n$-orbits of aperiodic cyclic fixed points in $G(k,n)$.
	\end{thm}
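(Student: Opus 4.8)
The plan is to combine the explicit parametrization of cyclic fixed points obtained in Section~\ref{sec: cyclic shift} with a single Schur-polynomial identity for the minors that appear in $\spk$, together with the elementary fact that a sum of roots of unity running over a coset of a nontrivial subgroup of $\mathbb{Z}\slash n$ vanishes. First I would pin down the correspondence. By Section~\ref{sec: cyclic shift}, every cyclic fixed point of $\overline{X}(k,n)$ is represented by a matrix whose rows are geometric progressions in $n$-th roots of unity, i.e.\ it is the image $[g_A]$ of $g_A$ for some $A=\{a_1,\ldots,a_k\}\in\binom{[n]}{k}$ (here $g_A$ is $\rho_n$-fixed because $g_A\rho_n^{-1}=\operatorname{diag}(q^{a_1},\ldots,q^{a_k})\,g_A$), and $[g_A]=[g_{A'}]$ in $\overline{X}(k,n)$ exactly when $A'=A+c$ in $\mathbb{Z}\slash n$ for some $c$. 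Right-multiplying $g_A$ by the element $c$ of $T_n$ rescales column $j$ by $q^{c(j-1)}$, hence sends $g_A\mapsto g_{A+c}$, so the $T_n$-orbit of $g_A$ in $G(k,n)$ is precisely $\{g_{A+c}:c\in\mathbb{Z}\slash n\}$. Therefore $[g_A]\mapsto\{g_{A+c}:c\}$ is a well-defined injection from the set of all cyclic fixed points of $\overline{X}(k,n)$ into the set of $T_n$-orbits of cyclic fixed points of $G(k,n)$, and it restricts to an injection on the subset of those that lie in $X(k,n)$. It then remains only to show that this restricted map lands among the \emph{aperiodic} $T_n$-orbits, i.e.\ that $[g_A]\in X(k,n)$ forces $A$ to be aperiodic.

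For that I would prove the contrapositive: if $A$ is not aperiodic, then $\sum_{a\in A}q^a=0$, and hence $p_{i,i+1,\ldots,i+k-2,i+k}(g_A)=0$ for every $i$, so $[g_A]\notin X(k,n)$. The first implication is pure arithmetic of roots of unity: non-aperiodicity means the stabilizer $H=\{c\in\mathbb{Z}\slash n:A+c=A\}$ is a subgroup of order $m\ge 2$, so $A$ is a disjoint union of cosets of $H$, and $\sum_{h\in H}q^h=\sum_{j=0}^{m-1}\zeta^j=0$ with $\zeta=e^{2\pi i/m}$ primitive, whence each coset sum vanishes and so does $\sum_{a\in A}q^a$. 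The second implication is the minor computation: setting $\xi_l=q^{a_l}$, the minor $p_{i,i+1,\ldots,i+k-2,i+k}(g_A)$ is, up to the nonzero monomial $(\xi_1\cdots\xi_k)^{i-1}$ (reduced using $\xi_l^n=1$ when the chosen columns wrap around mod $n$), the bialternant in $\xi_1,\ldots,\xi_k$ with exponent set $\{0,1,\ldots,k-2,k\}$ — the staircase with its top step $k-1$ replaced by $k$ — so by the bialternant formula for Schur polynomials,
\be
p_{i,i+1,\ldots,i+k-2,i+k}(g_A)\ =\ (\xi_1\cdots\xi_k)^{i-1}\,\Big(\prod_{l<m}(\xi_m-\xi_l)\Big)\,(\xi_1+\cdots+\xi_k).
\ee
Since the $\xi_l$ are distinct, this vanishes iff $\xi_1+\cdots+\xi_k=\sum_{a\in A}q^a=0$. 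Combining the two implications gives $[g_A]\in X(k,n)\Rightarrow A$ aperiodic, which completes the injection.

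I expect the main difficulty to be careful bookkeeping rather than any deep step: one must read ``cyclic fixed point in $X(k,n)$'' as a cyclic fixed point of $\overline{X}(k,n)$ that happens to lie in $X(k,n)$, and the injectivity leans entirely on the Section~\ref{sec: cyclic shift} classification that two $k$-subsets define the same configuration iff they differ by a global cyclic shift. I would add two clarifying remarks. First, the same computation with exponent set the full staircase $\{0,1,\ldots,k-1\}$ shows the consecutive minor $p_{i,i+1,\ldots,i+k-1}(g_A)$ never vanishes, so among the minors of $\spk$ the only obstruction is $\sum_{a\in A}q^a=0$; hence the injection in fact extends to every cyclic fixed point that is a critical point of $\spk$, which by Theorem~\ref{solFinite} are precisely the solutions of the scattering equations on planar kinematics. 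Second, the injection need not be onto: when $k$ is a sum of primes dividing $n$ there are aperiodic subsets $A$ with $\sum_{a\in A}q^a=0$ (vanishing sums of roots of unity that are not single cosets), which yield aperiodic cyclic fixed points of $G(k,n)$ outside the image. Since the Proposition preceding the theorem identifies the number of $T_n$-orbits of aperiodic cyclic fixed points with the Lyndon-word count $\mathcal{N}_{k,n}$, this immediately gives the advertised upper bound $\mathcal{N}_{k,n}$ for the number of planar-kinematics solutions.
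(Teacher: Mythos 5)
Your proof is correct and takes essentially the same route as the paper's: parametrize cyclic fixed points by $k$-subsets $A$ (the paper cites Karp's Theorem 1.1, you derive it from the Section~\ref{sec: cyclic shift} classification) and show that non-aperiodicity of $A$ forces $\sum_{a\in A}q^{a}=0$, which makes the gapped minors $p_{i,i+1,\ldots,i+k-2,i+k}$ of $\spk$ vanish so that $[g_A]\notin X(k,n)$. The only differences are cosmetic: the paper deduces the vanishing sum directly from $q^{m}\sum_{a\in A} q^{a}=\sum_{a\in A} q^{a}$ with $q^{m}\neq 1$ rather than via your coset argument, and it records the minor relation only as a ratio at a single index rather than through your bialternant/Schur formula, while you spell out the well-definedness and injectivity of the map $[g_A]\mapsto\{g_{A+c}:c\}$, which the paper leaves implicit.
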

	
	\begin{proof}
		Supposing that $g\in G(k,n)$ is any cyclic fixed point.  Then it follows from \cite[Theorem 1.1]{karp2019moment} that there exists a unique $A = \{a_1,\ldots, a_k\} \in \binom{\lbrack n\rbrack}{k}$ such that $g = g_A$ modulo $GL(k)$.
		
		Let us suppose that $A$ were not aperiodic; this means that there exists $m \in \{1,\ldots, n-1\}$ such that as sets we have
		$$\{a_1+m,\ldots, a_k+m\} =\{a_1,\ldots, a_k\}.$$
		First note that
		$$	\frac{p_{1,2,\ldots, k-1,k+1}(g_A)}{p_{1,2,\ldots, k-1,k}(g_A)} = q^{a_1}+q^{a_2}+\cdots +q^{a_k}.$$
		Then we have
		\begin{eqnarray*}
			q^m\left(q^{a_1}+q^{a_2} + \cdots  + q^{a_k}\right)	& = & q^{a_1} + q^{a_2} + \cdots +q^{a_k}
		\end{eqnarray*}
		hence
		\begin{eqnarray*}
			0 & = & (1-q^m)\left( q^{a_{1+t}} + q^{a_{2+t}} + \cdots + q^{a_{k+t}}\right)\\
			\Rightarrow  0 & = & q^{a_1} + q^{a_2} + \cdots + q^{a_k}.
		\end{eqnarray*}
		This implies that $g$ cannot be a critical point of the planar kinematics potential function.
		
	\end{proof}
	
	Consequently we finally obtain our constructive upper bound on the number of critical points of the planar kinematics potential function \eqref{eq: planar kinematics scattering equations}.
	\begin{cor}
		For any $2\le k\le n-2$, then the PK potential function $\mathcal{S}^{(PK)}_{k,n}$ has at most $\mathcal{N}_{k,n}$ critical points, where $\mathcal{N}_{k,n}$ is the number of Lyndon words with $k$ ones and $n-k$ zeros, given in Equation \eqref{eq:Lyndon word enumeration}.
	\end{cor}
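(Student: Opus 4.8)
The plan is to read the bound off directly from the pieces already assembled, in three moves: first that every critical point of $\spk$ is a cyclic fixed point; second the injection of Theorem~\ref{thm: Injection}; third the identification of the target set with binary Lyndon words. I would begin with the first move. Let $[p]$ be a critical point of $\spk$, represented in the chart~\eqref{coord}. Then every minor $p_{i,i+1,\ldots,i+(k-1)}$ and $p_{i,i+1,\ldots,i+(k-2),i+k}$ occurring in $\spk$ is finite and nonzero, since otherwise $\spk$ is not even defined at $[p]$. Lift $[p]$ into the chart~\eqref{chartG} of $G(k,n)$ by setting $q_i=1$ for $i\neq n-k$, choosing a nonzero common value $\Gamma$, and solving the recursion~\eqref{eq:Gamma}, namely $\Gamma = q_{a-k}\,(t_a/t_{a-1})\, p_{\ldots,a-3,a-2,a}/p_{\ldots,a-3,a-2,a-1}$, for the scale factors $t_a$: the minors $p_{\ldots,a-3,a-2,a}$ are among those appearing in $\spk$ and hence are nonzero, so each ratio $t_a/t_{a-1}$ is determined, and multiplying the relations around $a=1,\ldots,n$ the $t$'s telescope away and leave a single constraint that pins the value $q=q_{n-k}$. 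Because $\Gamma\neq 0$, the computation of Section~\ref{sec: superpotential PK potential equivalence} shows that $\partial\mathcal{F}_q/\partial t_a=0$ holds by construction while $\Gamma^{-1}\partial\mathcal{F}_q/\partial z_a=0$ is precisely the scattering equation~\eqref{eq: planar kinematics scattering equations} satisfied at $[p]$; hence the lift is a critical point of $\mathcal{F}_q$. By Karp's Proposition it is then a fixed point of $\sigma_t$, which (as in the proof of Theorem~\ref{solFinite}) descends to a cyclic fixed point of $\rho_n$; since all relevant minors are nonzero it lies in $X(k,n)$ and equals $[p]$.

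With that inclusion in hand, Theorem~\ref{thm: Injection} embeds the set of critical points of $\spk$ into the set of $T_n$-orbits of aperiodic cyclic fixed points in $G(k,n)$. The Proposition above, together with the remark following it, identifies these $T_n$-orbits with the $\mathbb{Z}/n$-orbits of aperiodic $k$-element subsets of $[n]$, which under the indicator-function map $J\mapsto e_J$ are in bijection with binary Lyndon words having $k$ ones and $n-k$ zeros; these number $\mathcal{N}_{k,n}$ as in Equation~\eqref{eq:Lyndon word enumeration}. Composing the three steps,
\[
\#\{\text{critical points of }\spk\}\ \le\ \#\{\text{cyclic fixed points in }X(k,n)\}\ \le\ \mathcal{N}_{k,n},
\]
which is the claim. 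The defective aperiodic cyclic fixed points of $G(k,n)$ — those with $p_{i,i+1,\ldots,i+(k-2),i+k}=0$ — simply lie outside the image, so the true count of critical points may be strictly smaller, consistently with an upper bound.

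I expect the only genuinely delicate step to be the lift in the first paragraph: one must check that a configuration in $X(k,n)$ at which $\spk$ is critical admits a representative in $G(k,n)$ for which $\Gamma$ is simultaneously well defined and constant, and determine the value of $q$ for which this happens — which is exactly the place where the non-vanishing of the minors entering $\spk$ is used, as flagged already in Equation~\eqref{eq:Gamma} and in the proof of Theorem~\ref{thm: Injection}. Everything downstream of that — the injection of Theorem~\ref{thm: Injection} and the Lyndon-word enumeration~\eqref{eq:Lyndon word enumeration} — is already available, so the corollary follows.
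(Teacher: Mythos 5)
Your proposal is correct and follows the paper's own route: Theorem~\ref{thm: Injection}, the identification of aperiodic orbits with binary Lyndon words, and the mirror-superpotential machinery of Section~\ref{sec: superpotential PK potential equivalence}. The one place where you add real content is your first paragraph: the paper's formal Lemma in Section~\ref{sec: superpotential PK potential equivalence} and Theorem~\ref{solFinite} only establish the forward direction (cyclic fixed points with non-vanishing $\Gamma$ give critical points of $\spk$), and the converse — that \emph{every} critical point of $\spk$ lifts to a critical point of $\mathcal{F}_q$ and hence is a cyclic fixed point — is asserted in the prose but never carried out. Your explicit lift (choose $\Gamma\neq 0$, solve for the ratios $t_a/t_{a-1}$ using the non-vanishing minors, let the product around the cycle fix $q$, then read off $\partial\mathcal{F}_q/\partial t_a=0$ by construction and $\partial\mathcal{F}_q/\partial z_a = -\Gamma\,\partial\spk/\partial z_a=0$) closes this gap cleanly, and it is exactly the step that makes the upper bound an upper bound rather than merely a count of a subset of critical points. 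So the argument is the same in structure, but more rigorous on the one point the corollary actually hinges on.
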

	
	Below we enumerate cyclic equivalence classes of aperiodic $k$-element subsets of $\{1,\ldots, n\}$, that is to say, Lyndon words with $k$ one's and $n-k$ zero's, for $k\le 6$ and $n\le 24$.  The numbers of critical points are given subsequently.
	$$
	\begin{array}{c|cccccccccccccccccccccc}
		k\backslash n & 3 & 4 & 5 & 6 & 7 & 8 & 9 & 10 & 11 & 12 & 13 & 14 & 15 & 16 & 17 & 18 & 19 & 20 & 21 & 22 & 23 & 24 \\
		\hline
		2 & 1 & 1 & 2 & 2 & 3 & 3 & 4 & 4 & 5 & 5 & 6 & 6 & 7 & 7 & 8 & 8 & 9 & 9 & 10 & 10 & 11 & 11 \\
		3 & 0 & 1 & 2 & 3 & 5 & 7 & 9 & 12 & 15 & 18 & 22 & 26 & 30 & 35 & 40 & 45 & 51 & 57 & 63 & 70 & 77 & 84 \\
		4 & 0 & 0 & 1 & 2 & 5 & 8 & 14 & 20 & 30 & 40 & 55 & 70 & 91 & 112 & 140 & 168 & 204 & 240 & 285 & 330 & 385 & 440 \\
		5 & 0 & 0 & 0 & 1 & 3 & 7 & 14 & 25 & 42 & 66 & 99 & 143 & 200 & 273 & 364 & 476 & 612 & 775 & 969 & 1197 & 1463 & 1771 \\
		6 & 0 & 0 & 0 & 0 & 1 & 3 & 9 & 20 & 42 & 75 & 132 & 212 & 333 & 497 & 728 & 1026 & 1428 & 1932 & 2583 & 3384 & 4389 & 5598 \\
	\end{array}
	$$
	Call a cyclic fixed point $g_A\in X(k,n)$ \textit{defective} if $A\in \binom{\lbrack n\rbrack}{k}$ is aperiodic, but we still have
	$$\det(v_i,\ldots, v_{i+k-2},v_{i+k})=0$$
	for $i=1,\ldots, n$.
	
	In other words, $g_A$ does not define a solution to the scattering equations at the PK point.
	
	We see this behavior for the first time at k=5.
	
	In the table above the actual number of critical points is less than the number of Lyndon words starting at $k=5$, where the (nonzero) entries are now given by 
	$$
	\begin{array}{ccccccccccccccccccc}
		1 & 3 & 7 & 14 & 25 & 42 & 65 & 99 & 143 & 200 & 273 & 364 & 474 & 612 & 775 & 969 & 1197 & 1463 & 1768. \\
	\end{array}
	$$

	In what follows, we tabulate representatives of the first few defective aperiodic cyclic fixed points which are not critical points.  %Note that in the notation below we are keeping the last entry, $n$.  
	
	k=5: 
	\begin{eqnarray*}\label{eqn:aberrant necklaces}
		n=12:& &  \{(1,4,7,8,12)\}\}\\
		n=18: & & \{(1,6,10,12,18),(1,7,9,13,18)\}\\
		n=24: & & \{(1,8,13,16,24),(1,9,12,17,24),(2,8,14,16,24)\}\\
		n=30: & & \{(1,10,16,20,30),(1,11,15,21,30),(2,10,17,20,30),(2,12,15,22,30)\}.
	\end{eqnarray*}
	Thus, the count decreases by $\frac{n-6}{6}$ for $n=12,18,24,30,36,\ldots$.  We have checked that this formula holds through $n=90$.
	
	For instance, for $n=12$ we have
	$$q+q^4+q^7+q^8+q^{12}=0,$$
	where $q=\exp(2\pi i/12)$.

	Also, by explicit computation, for $k=6$ one finds exactly one defective aperiodic cyclic fixed point at $n=30$ and one at $n=60$; we did not attempt to compute larger $n$.  These correspond to
	\begin{eqnarray*}\label{eqn:aberrant necklaces 2}
		n=30: & & \{(1,7,13,19,20,30)\}\\
		n=60: & & \{(2,14,26,38,40,60)\}.
	\end{eqnarray*}
	Based on the data for $n=5,6$, it is tempting to try to refine the upper bound to an exact enumeration; but finding the general rule for all $2\le k \le n-2$ appears to be beyond the scope of this paper and is left to future work.

	\section{Evaluating CEGM Biadjoint Amplitudes}\label{sec: Evaluating CEGM Biadjoint Amplitudes}
	
	In this section we review the construction of CEGM biadjoint amplitudes with special detail on the $SL(k)$ gauge fixing procedure. In fact, on planar kinematics there are solutions which do not admit the standard gauge fixing and therefore more general gauge fixings are necessary. 
	
	Recall that the most general $\mathbb{CP}^{k-1}$ scattering equations are the conditions for finding the critical points of a general potential function 
	\be
	{\cal S}_{k,n} = \sum_{b_1,b_2,\ldots ,b_k=1}^n \s_{b_1,b_2,\ldots ,b_k} \log p_{b_1,b_2,\ldots ,b_k}.
	\ee
	More explicitly,
	
	\be\label{scattering-equations}
	\frac{\partial {\cal S}_{k,n}}{\partial \x_{a,i}} = 0 \qquad \forall\,\, (a,i),
	\ee
	where $\x_{a,i}$ represent inhomogeneous coordinates of the $a^{\rm th}$ point on $\mathbb{CP}^{k-1}$. The coordinates can be arranged in a matrix
	\be\label{coord}
	\left(
	\begin{array}{cccccc}
		1 & 1       & \cdots & 1 & 1  \\
		\x_{1,1} & \x_{2,1}     & \cdots & \x_{n-1,1} & \x_{n,1}  \\
		\x_{1,2} & \x_{2,2} & \cdots & \x_{n-1,2} & \x_{n,2}  \\
		\vdots & \vdots &  \ddots       & \vdots & \vdots \\
		\x_{1,k-1} & \x_{2,k-1} & \cdots & \x_{n-1,k-1} & \x_{n,k-1} \\
	\end{array}
	\right) . 
	\ee
	
	In order for the potential function to be well-defined on $X(k,n)$ the kinematic invariants $\s_I$ must be completely symmetric in their indices and satisfy the following properties:
	\be
	\sum_{b_2,b_3,\ldots ,b_n =1}^n\s_{a,b_2,b_3,\ldots ,b_n} =0 \quad {\rm and} \quad  \s_{a,a,b_3,\ldots ,b_k} = 0 \quad \forall \, a\in \{1,2,\ldots ,n\}.
	\ee
	
	The set of scattering equations \eqref{scattering-equations} is covariant under the action of $SL(k)$ acting on the matrix \eqref{coord} by left multiplication. This means that $k^2-1$ equations are redundant. This is a welcome fact as $SL(k)$ can be used to fix $k^2-1$ of the variables in the matrix \eqref{coord}. These two facts mean that the Hessian matrix of ${\cal S}_{k,n}$ which is a $(k-1)n \times (k-1)n$ matrix has corank $k^2-1$. 
	
	The evaluation of the amplitudes requires the definition of a reduced determinant of the Hessian matrix, since the Hessian of ${\cal S}_{k,n}$ is the Jacobian matrix of the scattering equations.
	
	In the CEGM original work, the reduced determinant was defined by analogy with the well-known $k=2$ case. Let us describe such particular construction before discussing the most general one. 
	
	The components of the Hessian in this context are usually denoted $\Psi_{IJ}$, with composed indices $I=(a,i)$ and $J=(b,j)$ so that 
	\be
	\Psi_{IJ}:=\frac{\partial^2 {\cal S}_{k,n}}{\partial \x_{a,i}\partial \x_{b,j}}. 
	\ee
	The CEGM construction of the reduced determinant is defined by selecting a submatrix obtained from $\Psi$ by deleting $k^2-1$ rows and $k^2-1$ columns, computing its determinant and compensating with a factor which makes the object independent of the choices made. Let us denote the submatrix obtained by deleting all rows that contain labels $\{ a_1,a_2\ldots a_{k+1}\}$ in their indices; a total of $(k-1)(k+1)$, and rows containing labels $\{ b_1,b_2\ldots b_{k+1}\}$ in their indices by $\Psi^{a_1,a_2\ldots,a_{k+1}}_{b_1,b_2,\ldots, b_{k+1}}$. Then the reduced determinant is 
	\be\label{CEGM def}
	{\rm det}'\Psi^{(k)} :=\frac{{\rm det}\Psi^{a_1,a_2\ldots,a_{k+1}}_{b_1,b_2,\ldots, b_{k+1}}}{\V_{a_1,a_2,\ldots ,a_{k+1}}\V_{b_1,b_2,\ldots, b_{k+1}}},
	\ee
	where the $\V_{a_1,a_2,\ldots ,a_{k+1}}$ is a generalization of a Vandermonde determinant defined by 
	\be\label{general Vander}
	\V_{a_1,a_2,\ldots ,a_{k+1}}:= \prod_{i=1}^{k+1}p_{a_1,a_2,\ldots ,\hat{a}_{i},\ldots a_{k+1}}.
	\ee
	
	Clearly, this definition of the reduced determinant requires 
	$\V_{a_1,a_2,\ldots ,a_{k+1}}$ and $\V_{b_1,b_2,\ldots, b_{k+1}}$ to be non-vanishing on the solution to the scattering equations used in the evaluation. Since the choice of the sets $\{a_1,a_2,\ldots ,a_{k+1}\}$ and $\{b_1,b_2,\ldots, b_{k+1}\}$ is arbitrary, one can try different choices until the generalized Vandermonde determinants are non-vanishing.
	
	\begin{defn}\label{frame}
		A set $\{a_1,a_2,\ldots ,a_{k+1}\}$ is called an $SL(k)$ frame on a particular solution to the scattering equations if the corresponding generalized Vandermonde determinant, $\V_{a_1,a_2,\ldots ,a_{k+1}}$, evaluated on the solution is non-zero.
	\end{defn}
	
	Now we can restate the applicability of the CEGM definition of reduced determinant. Formula \eqref{CEGM def} can be used on a given solution to the scattering equations if and only if the solution defines a point in $\overline{X}_{k,n}$ with at least one frame. 
	
	When $k=2$ all solutions to the scattering equations admit at least one frame. However, in the next section we find that $k=4,n=9$ is the first case with {\it frameless} solutions.
	
	When dealing with frameless solutions one has to use a more general gauge fixing procedure. Since $k=4$ is our main application in this work, we describe the construction in that case and leave the general $k$ construction as a straightforward exercise to the reader.
	
	\subsection{General $SL(4)$ Gauge Fixing}\label{general gauge}
	
	Consider an arbitrary infinitesimal $SL(4)$ transformation acting on a point in $\mathbb{CP}^3$. Let us parameterize the transformations as
	\be\label{four}
	\left(
	\begin{array}{cccc}
		1+\e_{11} & \e_{12}  & \e_{13} & \e_{14}  \\
		\e_{21} & 1-\e_{11}+\e_{22}  & \e_{23} & \e_{24}  \\
		\e_{31} & \e_{32}  & 1-\e_{22}+\e_{33} & \e_{34}  \\
		\e_{41} & \e_{42}  & \e_{43} & 1 - \e_{33} \\
	\end{array}
	\right) . 
	\ee
	Here $\epsilon_{ij}$ are infinitesimal deformations and we have chosen to impose the tracelessness condition of the infinitesimal generations in a particular way. There are $4^2-1=15$ infinitesimal deformations.
	
	To obtain the action on $(1,x_1,x_2,x_{3})^T$ we simply multiply on the left by \eqref{four} and use the torus action to set the top component to one. Performing this and subtracting the original vector one finds the infinitesimal variations
	\begin{align}\label{gen4}
		& & \nonumber \delta x_1  =  -x_1^2 \e_{12}-2 x_1 \e_{11}+x_1 \e_{22}-x_2 x_1
		\e_{13}-x_3 x_1 \e_{14}+x_2 \e_{23}+x_{3} \e_{24}+\e_{21}, \\  
		\nonumber & &  \delta x_2  =  x_1 \e_{32}-x_2 \e_{22}+x_2 \e_{33}+x_3 \e_{34}+x_2 (-x_1
		\e_{12}-x_2 \e_{13}-x_3 \e_{14}-\e_{11})+\e_{31}, \\
		& & \delta x_3  = x_1 \e_{42}+x_2\e_{43}-x_3 \e_{33}+x_3 (-x_1 \e_{12}-x_2
		\e_{13}-x_3 \e_{14}-\e_{11})+\e_{41}.  
	\end{align}
	The key idea is that these infinitesimal variations provide a way of computing a basis of the null space of the Jacobian matrix which is covariant under $SL(k)$ and torus actions. The null space is spanned by $15$ vectors in $\mathbb{C}^{3n}$. There is one vector for each $\e_{i,j}$. For example, consider $\e_{4,1}$. Setting all other $\e_{i,j}$ to zero in \eqref{gen4}
	\be
	(\delta x_1,\delta x_2,\delta x_3) = \left(-x_1 x_2,-x_2 x_3,-x_3^2\right)\e_{4,1}.
	\ee
	Applying this to the coordinates of all $n$ particles produces a $3n$ dimensional vector
	\be
	v_{41}:=\left(-x_{1,1} x_{2,1},-x_{2,1} x_{3,1},-x_{3,1}^2,\ldots ,-x_{1,n} x_{2,n},-x_{2,n} x_{3,n},-x_{3,n}^2 \right)^T.
	\ee

	These vectors can be grouped into a $15\times 3n$ matrix, 
	\be
	{\cal V} := \left( v_{11},v_{22},v_{33},v_{12},\ldots ,v_{43}\right).
	\ee
	Note that we have not yet fixed the normalization of the vectors spanning the null space. This is done when we reproduce the standard CEGM gauge fixing.

	It is convenient to give a notation for the minors of $\cal V$. Recall that the entries of the the Hessian matrix, $\Psi_{IJ}$, where indexed with $I=(a,i)$ and $(J,b)$. Here we allow $I$ and $J$ to be numbers from $1$ to $3n$ with the matching made lexicographically to $(a,i)$. For example, $I=4$ corresponds to $(2,1)$. The minor of $\cal V$ made with rows $\{I_1,I_2,\ldots I_{15}\}$ is denoted as $[I_1,I_2,\ldots ,I_{15}]$.
	
	Now we are ready to define the most general $SL(4)$ gauge fixing and its associated reduced determinant,
	\be\label{general detPrime}
	{\rm det}'\Psi^{(4)} :=\frac{ {\cal N}\,{\rm det}\Psi^{I_1,I_2,\ldots ,I_{15}}_{J_1,J_2,\ldots ,J_{15}}}{[I_1,I_2,\ldots ,I_{15}][J_1,J_2,\ldots ,J_{15}]}.
	\ee
	Here ${\cal N}$ is a proportionality constant which is needed to match the normalization of biadjoint amplitudes. If desired, ${\cal N}$ could be reabsorbed in the normalization of the vectors chosen to span the null space of the Hessian.   
	
	%as it becomes clear in Proposition \eqref{vanProp}.
	
	\begin{prop}
		The value of ${\rm det}'\Psi^{(4)}$ evaluated on a solution to the scattering equations is independent of the choice of sets $\{I_1,I_2,\ldots I_{15}\}$ and $\{J_1,J_2,\ldots J_{15}\}$, up to a sign, for all choices in which neither  
		$[I_1,I_2,\ldots ,I_{15}]$ nor $[J_1,J_2,\ldots ,J_{15}]$ vanish.
	\end{prop}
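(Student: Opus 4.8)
The plan is to reduce the proposition to the classical fact that a reduced (pseudo‑)determinant assembled from the kernel data of a symmetric matrix is independent of the deleted rows and columns up to sign, and then to prove that fact by Plücker duality. Write $\Psi:=\Psi^{(4)}$ and $r=15$. The structural input, already recorded above, is that on a solution of the scattering equations $\Psi$ is symmetric of corank exactly $r$ with kernel equal to the column span of $\mathcal V$: the variations \eqref{gen4} are tangent to the $SL(4)$‑orbit along which ${\cal S}_{k,n}$ is constant, so differentiating the scattering equations in those directions annihilates them, i.e. $\mathcal V^{T}\Psi=0$; and since the hypothesis forces some minor $[I_1,\ldots,I_{15}]$ to be nonzero, $\mathcal V$ has full rank $r$, so its columns span an $r$‑dimensional space, which by the corank count is all of $\ker\Psi$.

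First I would reduce to a one‑sided claim: it suffices to show that for a fixed admissible choice of $\{J_1,\ldots,J_{15}\}$ (admissible meaning $[J_1,\ldots,J_{15}]\neq0$) the ratio $\det\Psi^{I_1,\ldots,I_{15}}_{J_1,\ldots,J_{15}}\big/[I_1,\ldots,I_{15}]$ is the same for every admissible $\{I_1,\ldots,I_{15}\}$ up to sign. The symmetric statement in the $J$'s then follows from $\Psi=\Psi^{T}$ (transposing interchanges deletion of rows $I$/columns $J$ with deletion of rows $J$/columns $I$ and preserves the determinant), and any two admissible patterns $(\{I_\bullet\},\{J_\bullet\})$ and $(\{I'_\bullet\},\{J'_\bullet\})$ are connected, up to sign, through the intermediate pattern $(\{I'_\bullet\},\{J_\bullet\})$, which is admissible by hypothesis.

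For the one‑sided claim, set $L:=\Psi_{\bullet,\widehat J}$, the $3n\times(3n-r)$ matrix obtained by keeping only the columns outside $\{J_1,\ldots,J_{15}\}$. Because $[J_1,\ldots,J_{15}]\neq0$, no nonzero element of $\ker\Psi={\rm colspan}\,\mathcal V$ vanishes on the coordinates $J_1,\ldots,J_{15}$, so deleting those columns does not lower the rank; hence $L$ has full column rank $3n-r$, its left kernel equals $\ker\Psi$, and its column span is the orthogonal complement $({\rm colspan}\,\mathcal V)^{\perp}$ with respect to the standard symmetric form on $\mathbb C^{3n}$. Now $\det\Psi^{I_1,\ldots,I_{15}}_{J_1,\ldots,J_{15}}=\det L_{\widehat I,\bullet}$ is a maximal minor of $L$, i.e. the Plücker coordinate $p_{\widehat I}\big(({\rm colspan}\,\mathcal V)^{\perp}\big)$ up to a scalar fixed by $L$ alone, while $[I_1,\ldots,I_{15}]=\det\mathcal V_{I,\bullet}$ is the Plücker coordinate $p_{I}({\rm colspan}\,\mathcal V)$ up to a scalar fixed by $\mathcal V$ alone. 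By the classical Plücker (Hodge‑star) duality $p_{[3n]\setminus I}(V^{\perp})=\varepsilon(I)\,p_{I}(V)$, with $\varepsilon(I)=\pm1$ depending on $I$ only, the two families of minors coincide up to one overall nonzero constant and the sign $\varepsilon(I)$. Dividing, $\det\Psi^{I_\bullet}_{J_\bullet}/[I_\bullet]$ equals that constant times $\pm1$, uniformly in the admissible $\{I_\bullet\}$; restoring the prefactor ${\cal N}$ and the denominator $[J_1,\ldots,J_{15}]$ changes nothing, which proves the proposition.

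The one genuinely delicate point is the identity $\ker\Psi={\rm colspan}\,\mathcal V$ on the solutions in question. The inclusion $\supseteq$ is automatic from $SL(4)$‑covariance, but the reverse needs the corank not to jump — equivalently that the critical point be nondegenerate modulo the $SL(4)$ gauge — together with linear independence of the fifteen variation vectors; it is precisely the admissibility hypotheses $[I_\bullet]\neq0$, $[J_\bullet]\neq0$ that secure the latter. Everything after that is the standard linear‑algebra argument for reduced determinants, and the unavoidable overall sign is exactly the sign $\varepsilon(I)$ (and its $J$‑counterpart) appearing in Plücker duality.
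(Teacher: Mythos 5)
Your proof is correct and proceeds by the natural linear-algebra route for reduced (pseudo-)determinants of symmetric matrices with a prescribed kernel: restrict $\Psi$ to the complementary columns, recognize the maximal minors of that restriction as Plücker coordinates of $\operatorname{im}\Psi = (\ker\Psi)^\perp$, and invoke Plücker (Hodge-star) duality $p_{[N]\setminus I}(V^\perp) = \pm\,p_I(V)$ to identify them, up to a global scalar and the sign $\varepsilon(I)$, with the minors $[I_1,\ldots,I_{15}]$ of $\mathcal V$. The reduction from the two-sided to the one-sided claim via $\Psi = \Psi^T$ and the intermediate pattern $(\{I'_\bullet\},\{J_\bullet\})$ is also right. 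The paper itself gives no detailed proof for this proposition — it only says it is a simple extension of Appendix A of \cite{Cachazo:2012pz} — so there is nothing to compare line by line, but what you have written is exactly the kind of abstract repackaging of that $k=2$ argument that one would expect.

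The one place where you flag a delicate point but do not quite close it is the identity $\ker\Psi = \operatorname{colspan}\mathcal V$. The admissibility hypotheses only give you $\operatorname{colspan}\mathcal V \subseteq \ker\Psi$ of dimension $15$, i.e.\ $\operatorname{corank}\Psi \ge 15$; they do not by themselves exclude a larger corank. But this costs nothing: if $\operatorname{corank}\Psi > 15$, then $\operatorname{rank}\Psi < 3n-15$, so every $(3n-15)\times(3n-15)$ minor $\det\Psi^{I_\bullet}_{J_\bullet}$ vanishes, and $\det'\Psi^{(4)}$ equals $0$ for every admissible choice — the conclusion holds trivially. Adding this one-line remark makes the proof complete without conditions. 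A very minor stylistic point is that when you write ``no nonzero element of $\ker\Psi$ vanishes on the coordinates $J_1,\ldots,J_{15}$'' you mean ``no nonzero element of $\ker\Psi$ has all its $J$-components equal to zero,'' which is indeed equivalent to $[J_1,\ldots,J_{15}]\ne 0$ since that minor detects injectivity of the projection of $\operatorname{colspan}\mathcal V$ onto the $J$-coordinate subspace; spelling that equivalence out would make the step self-contained.
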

	
	The proof is a simple extension of the one given in Appendix A of \cite{Cachazo:2012pz} for the $k=2$ case.
	
	Let us end this part of the section with a discussion on how to recover the CEGM gauge fixing from the generalized one and in the process we fix the normalization ${\cal N}$. In cases in which there is a frame, it is natural to select
	$\{I_1,I_2,\ldots I_{15}\}$ so that they agree with
	\be\label{partC}
	\{ (a_1,1),(a_1,2),(a_1,3),(a_2,1),(a_2,2),(a_2,3)\ldots , (a_5,1),(a_5,2),(a_5,3)\}.
	\ee
	In other words, one selects five particle labels $\{a_1,a_2,a_3,a_4,a_5\}$ and all three coordinates for each.
	
	\begin{prop}\label{vanProp}
		Given a choice of $\{I_1,I_2,\ldots ,I_{15}\}$ as in \eqref{partC} the following agree
		\be\label{idenV}
		[I_1,I_2,\ldots ,I_{15}] = 4\V_{a_1,a_2,a_3,a_4,a_5}
		\ee
		with $\V$ defined in \eqref{general Vander} as
		$$ \V_{a_1,a_2,a_3,a_4,a_5} = \prod_{i=1}^{5}p_{a_1,a_2,\ldots ,\hat{a}_{i},\ldots a_{5}}. $$
	\end{prop}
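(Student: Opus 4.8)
The plan is to compute the $15\times 15$ minor $[I_1,\ldots,I_{15}]$ directly from the explicit expressions for the null-space vectors in \eqref{gen4}, specialized to the case where the rows are grouped as in \eqref{partC}: namely the three coordinates of each of five distinguished points $a_1,\ldots,a_5$. The key observation is that when restricted to these fifteen rows the $15\times 15$ matrix has a block structure — it decomposes into five $3\times 3$ blocks (one per point $a_r$) together with ``off-diagonal'' contributions coming only from the $\epsilon_{ij}$ with $i\neq j$, $i,j\le 3$, and the trace generators $\epsilon_{11},\epsilon_{22},\epsilon_{33}$, whereas the generators $\epsilon_{i1}$ for $i=2,3,4$ and $\epsilon_{1j}$ produce genuinely point-dependent rows. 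So the first step is to write out the $15\times 15$ matrix $\mathcal V$ restricted to the chosen rows as a function of the coordinates $(x_{1,a_r},x_{2,a_r},x_{3,a_r})$.

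Next I would expand the determinant and recognize it as a polynomial in the fifteen coordinates that is multilinear in a suitable sense. The cleanest way is to note that each column $v_{ij}$, restricted to point $a_r$, is a fixed vector-valued polynomial in $(x_{1,a_r},x_{2,a_r},x_{3,a_r})$ of the form dictated by \eqref{gen4}, so the $15\times 15$ determinant is — after row operations within each $3$-block — a product over the five points of a common Vandermonde-type structure. Concretely, I expect that after appropriate elementary column operations the determinant factors as $4\,\prod_{1\le r<s\le 5}(\text{something antisymmetric in }a_r,a_s)$, and that this antisymmetric object is exactly the $4\times 4$ Plücker minor obtained from the four points other than the ``missing'' one. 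The combinatorial identity $\V_{a_1,\ldots,a_5}=\prod_{i=1}^{5}p_{a_1,\ldots,\hat a_i,\ldots,a_5}$ already packages the product of the five $4\times 4$ minors, so the task reduces to matching the $15\times 15$ determinant against this product and tracking the overall numerical constant, which should come out to $4$ from the specific traceless parametrization chosen in \eqref{four}.

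The most efficient route to the constant and the factorization is probably a specialization/degree-counting argument rather than brute force: the left-hand side $[I_1,\ldots,I_{15}]$ is a polynomial in the $x$'s that must vanish whenever two of the points $a_r,a_s$ coincide (since then two $3$-blocks become linearly dependent after accounting for the shared structure), it is $SL(4)$-covariant of the correct weight, and it has the correct total degree to be proportional to $\prod_i p_{a_1,\ldots,\hat a_i,\ldots,a_5}$; hence it equals that product up to a constant, and the constant is read off by evaluating both sides at one convenient configuration of five points (for instance four points at standard basis vectors and the fifth generic). The constant $4$ reflects the one relation among the traceless generators. I expect the main obstacle to be bookkeeping: correctly tracking signs and the normalization of the vectors $v_{ij}$ (which, as the text notes, has not yet been pinned down) so that the proportionality constant genuinely comes out to $4$ and not some other factor absorbed into $\mathcal N$. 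Once the normalization convention is fixed by this computation, the proposition — and the compatibility of the general gauge fixing in \eqref{general detPrime} with the original CEGM definition \eqref{CEGM def} via $\mathcal N$ — follows.
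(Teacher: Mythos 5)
The paper's own proof of this proposition is simply a direct symbolic computation: it observes that the identity \eqref{idenV} is a purely algebraic statement about polynomials (not requiring the support of the scattering equations) and verifies it in a computer algebra system. You take a genuinely different, more conceptual route --- $SL(4)$-covariance, degree counting, a vanishing argument, and a normalization check --- which, if carried out carefully, would be more illuminating than the brute-force verification the paper actually performs.

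However, the key vanishing step as you state it has a gap. The determinant $[I_1,\ldots,I_{15}]$ vanishing when two points $a_r,a_s$ coincide is a codimension-three condition (all three inhomogeneous coordinates must agree), and this does \emph{not} by itself give divisibility by any particular polynomial, let alone by $\V_{a_1,\ldots,a_5}=\prod_{i=1}^5 p_{a_1,\ldots,\hat a_i,\ldots,a_5}$. What your argument actually requires is that $[I_1,\ldots,I_{15}]$ vanish on each irreducible hypersurface $\{p_{a_1,a_2,a_3,a_4}=0\}$, i.e.\ whenever any four of the five points lie on a common $\mathbb{CP}^2\subset\mathbb{CP}^3$ while $a_5$ stays generic. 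That this is the right locus is a nontrivial but checkable fact: if $a_1,\ldots,a_4$ span a hyperplane $V\subset\mathbb{C}^4$, any $\xi\in\mathfrak{sl}_4$ having all four as eigenvectors must preserve $V$ and restrict to a scalar $cI_3$ on $V$, so in block form $\xi=\bigl(\begin{smallmatrix} cI_3 & b\\ 0 & -3c\end{smallmatrix}\bigr)$; imposing that $\xi$ also fix $a_5=(u,1)^T$ projectively forces $b=-4cu$, and this one-parameter family of infinitesimal symmetries makes the chosen fifteen rows of $\mathcal V$ linearly dependent. By irreducibility of each $4\times4$ minor one gets $p_{a_1,a_2,a_3,a_4}\mid[I_1,\ldots,I_{15}]$; by the $S_5$-symmetry in the labels $a_1,\ldots,a_5$ and coprimality of the five minors, $\V_{a_1,\ldots,a_5}$ divides; and the total-degree count ($15=15$) then forces equality up to a constant, which a single specialization fixes to $4$. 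So the shape of your argument is right, but the coincident-points vanishing must be replaced by the coplanarity vanishing to make the divisibility step legitimate.
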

	
	The proof is easily carried out using a symbolic manipulation program. The identity \eqref{idenV} is purely algebraic and it does not require to be on the support of the scattering equations.
	
	Using this result in the definition of the reduced determinant \eqref{general detPrime} one immediately concludes that ${\cal N}=4^2=16$.

	\section{CEGM Amplitudes on Planar Kinematics}\label{sec:CEGM Amplitudes on Planar Kinematics}
	
	In this section we evaluate the CEGM biadjoint amplitudes on planar kinematics in order to provide support for the conjecture stating that their values are computed by the higher dimensional Catalan numbers. 
	
	In order to evaluate the CEGM biadjoint amplitudes on the planar kinematics it is necessary to introduce the $k$-Parke-Taylor factor, 
	\be
	{\rm PT}(1,2,\ldots ,n) := \frac{1}{p_{1,2,\ldots ,k}\, p_{2,3,\ldots ,k+1}\cdots p_{n,1,\ldots ,k-1}}.
	\ee
	Finally, the CHY formulation of the CEGM biadjoint amplitude is constructed as follows
	\be
	m_n^{(k)}(\mathbb{I},\mathbb{I})=\sum_{m=1}^{{\cal N}_{n,k}}\left.\frac{1}{{\rm det}'\Psi^{(k)}}\, \left({\rm PT}(1,2,\ldots ,n-1,n)\right)^2\right|_{\x_a=\x_a^{(m)}}. 
	\ee
	where the sum runs over all ${\cal N}_{k,n}$ solutions to the scattering equations denoted $\x_a^{(m)}$. 
	
	In order to present our results, it is useful to review the definition of the higher dimensional Catalan numbers $C_m^{(d)}$. As it turns out, these numbers satisfy a duality relation $C_m^{(d)} = C_d^{(m)}$. This motivated us to write their explicit form in a way that manifests the symmetry. Moreover, the conjecture of \cite{MKandPK} states that $m_n^{(k)}(\mathbb{I},\mathbb{I}) = C^{(k)}_{n-k}$ with
	\be
	C^{(k)}_{n-k} := \frac{\prod_{p=0}^{n-k}p!\prod_{q=0}^k q!}{\prod_{r=0}^{n-1}r!}.
	% \cat{k}{n-k} := \frac{\prod_{p=0}^{n-k}p!\prod_{q=0}^k q!}{\prod_{r=0}^{n-1}r!}.
	\ee
	
	\subsection{Explicit Results}
	
	We have performed extensive computations and in every case we have found that $m_n^{(k)}(\mathbb{I},\mathbb{I}) = \cat{k}{n-k}$ on planar kinematics. 
	
	For $k=2$ Cachazo, He, and Yuan (CHY) conjectured in 2013 that $m_n^{(2)}(\mathbb{I},\mathbb{I})$, defined in terms of a sum over solutions, evaluates to the $(n-2)^{\rm th}$ Catalan number, $C_{n-2}$. This is consistent with the more general conjecture since $C^{(2)}_{n-2}$ is indeed the standard $(n-2)^{\rm th}$ Catalan number. In their paper CHY provided strong evidence for their conjecture. In 2014 Dolan and Goddard proved that $m_n^{(2)}(\mathbb{I},\mathbb{I})$ on general kinematics agrees with the sum over planar Feynman diagrams in a cubic scalar theory. If the planar kinematics is approached as a limit of general kinematics then each planar Feynman diagram evaluates to one and their sum simply becomes the number of planar cubic trees with $n$ leaves which is well-known to be $C_{n-2}$.
	
	For $k=3$ we have evaluated $m_n^{(3)}(\mathbb{I},\mathbb{I})$ on planar kinematics by summing over the solutions corresponding to cyclic fixed points for all $n\le40$. 
	
	The computation for $k=3$ is very straightforward since all cyclic fixed points that are solutions to the scattering equations admit a frame and therefore a standard gauge fixing.
	
	Let us move to $k=4$ cases where for the first time frameless solutions are found for $n>8$. Clearly, $k=4, n<8$ cases do not have frameless solutions since they are dual to $k=2$ and $k=3$ cases. 
	
	Let us start the $k=4$ discussion with $n=8$. The first step is to determine the cyclic fixed points that are solutions to the scattering equations. There are a total for ten triples of integers $(m_1,m_2,m_3)$ that are inequivalent under the $SL(4)$ and torus action. Of these, two are not solutions to the scattering equation. More explicitly, one can check that if $q=\exp (2\pi i /8)$ then 
	$$ 1+q+q^4+q^5 = 0 \quad {\rm and} \quad 1+q^2+q^4+q^6 = 0 $$
	and therefore $(1,4,5)$ and $(2,4,6)$ are not solutions to the scattering equations. 
	
	The remaining eight cyclic fixed points are solutions. Seven of them admit a standard frame using particles $\{1,2,3,4,5\}$. The seven solutions are
	\be
	\{(1, 2, 3), (1, 2, 4), (1, 2, 6), (1, 3, 4), (1, 3, 5), (1, 3, 6), (1,
	4, 6)\} .
	\ee
	The evaluation of the contributions to the amplitude from these seven solutions is easily done using the standard gauge fixing and gives rise to $24008$.
	
	The solution corresponding to $(1,2,5)$ has a matrix representative of the form
	\be
	\left(
	\begin{array}{cccccccc}
		1 & 0 & 0 & 0 & 1 & 1 & 1 & 1 \\
		0 & 1 & 0 & 0 & 2 & 1 & 1 & 0 \\
		0 & 0 & 1 & 0 & 0 & 1 & \frac{1}{2} & 1 \\
		0 & 0 & 0 & 1 & 1 & 1 & \frac{1}{2} & \frac{1}{2} \\
	\end{array}
	\right)
	\ee
	which makes it clear that a frame with particles $\{1,2,3,4,5\}$ is not possible but one with particles $\{1,2,3,4,6\}$ is. Computing the contribution to the amplitude gives $16$. Combining the two results we obtain 
	\be
	m_8^{(4)}(\mathbb{I},\mathbb{I}) = 24\, 024
	\ee
	which agrees with the four-dimensional Catalan number $C_4^{(4)}$.
	
	Now we are ready to discuss the first example in which frameless solutions are found. This is the case of $(k,n)=(4,9)$.
	
	There are a total for fourteen inequivalent triples of integers $(m_1,m_2,m_3)$. In this case all fourteen produce solutions to the scattering equations. 
	
	There are twelve solutions that admit a frame and two frameless solutions. The ones that admit a frame are
	$$(1, 2, 3), (1, 2, 4), (1, 2, 5), (1, 2, 6), (1, 2, 7), (1, 3, 4),$$
	$$(1, 3, 5), (1, 3, 7), (1, 4, 5), (1, 4, 6), (1, 5, 7), (2, 4, 6).$$
	and their contribution to the amplitude is $14\, 965\, 237/9$.
	
	The frameless triples are $(1, 3, 6)$ and $(1, 4, 7)$. Defining $r=\exp (\pi i/9)$ a matrix representative for $(1, 3, 6)$ is given by 
	\be
	\left(
	\begin{array}{ccccccccc}
		1 & 0 & 0 & 0 & 1 & 1 & 1 & 1 & 1 \\
		0 & 1 & 0 & 0 & r^7 & 0 & 0 & r^4 & 0 \\
		0 & 0 & 1 & 0 & 0 & r^5 & 0 & 0 & r^2 \\
		0 & 0 & 0 & 1 & -1 & -1 & r^6 & -1 & -1 \\
	\end{array}
	\right).
	\ee
	An exhaustive search shows that all possible subsets of five particles give rise to vanishing generalized Vandermonde determinants. 
	
	Following the construction of general $SL(4)$ gauge fixings provided in Section \ref{general gauge} it is possible to find a valid one given by 
	$$(I_1,I_2,\ldots ,I_{15}) =(10, 13, 15, 16, 17, 18, 19, 20, 21, 22, 23, 24, 25, 26, 27). $$
	In order words, the determinant of the $12\times 12$ submatrix of the Jacobian matrix obtained from columns and rows in ${1, 2, 3, 4, 5, 6, 7, 8, 9, 11, 12, 14}$ is non zero. 
	
	Defining $s=\exp (\pi i/3)$, the second frameless solution has a matrix representative of the form
	\be
	\left(
	\begin{array}{ccccccccc}
		1 & 0 & 0 & 0 & 1 & 1 & 1 & 1 & 1 \\
		0 & 1 & 0 & 0 & -1 & 0 & 0 & -s & 0 \\
		0 & 0 & 1 & 0 & 0 & -1 & 0 & 0 & -s \\
		0 & 0 & 0 & 1 & s & s & s^2 & s & s \\
	\end{array}
	\right).
	\ee
	
	It turns out that the same gauge fixing that works for $(1, 3, 6)$ also works for $(1, 4, 7)$. The combined contribution to the amplitude is $1/9$.
	
	Adding the contributions from all fourteen solutions one finds
	\be
	m_9^{(4)}(\mathbb{I},\mathbb{I}) = 1\, 662\, 804
	\ee
	which agrees with $C^{(4)}_5$.
	
	In principle there is no obstacle against computing  $m_n^{(k)}(\mathbb{I},\mathbb{I})$ to arbitrarily high values of $k$ and $n$ except for the computationally intensive task of searching for valid $SL(k)$ gauge fixings for frameless solutions.
	
	It is important to mention that in our numerical study we have found that when $n$ is prime there are no frameless solutions and computations can be carried out to high values of $(k,n)$. We list the computations we have performed below. In every case, the results agree with the high-dimensional Catalan conjecture.
	
	Results are listed with computation time, in Mathematica.
	
	\begin{itemize}
		\item $k=3$: $n\le 40$.
		\item $k=4$: All $n\le 15$.  Additionally $n=23$ (493 seconds) and $n=29$ (1839 seconds).
		\item $k=5$: $n=10,11,12,13,14,17$.  (For $n=19$: 6953 seconds).
		\item $k=6$: $n=13$ (8007 seconds).
	\end{itemize}

	\section{Tropical Grassmannian Evaluation and the Global Schwinger Parametrization}\label{sec:tropGrass Eval}
	%\section{Tropical Grassmannian Evaluation}\label{sec:tropGrass Eval}
	
	In \cite{MKandPK} evidence for the conjecture that $m_n^{(k)}(\mathbb{I},\mathbb{I})$ evaluates to the k-dimensional Catalan number $\cat{k}{n-k}$ on planar kinematics was obtained by evaluating  $m_n^{(k)}(\mathbb{I},\mathbb{I})$ as a sum over generalized Feynman diagrams \cite{Borges:2019csl}. Generalized Feynman diagrams are the $k>2$ analog of the standard planar cubic Feynman diagrams used to evaluate $m_n^{(2)}(\mathbb{I},\mathbb{I})$. In fact, planar kinematics for $k=2$ was originally designed to make each Feynman diagram evaluate to one so that $m_n^{(2)}(\mathbb{I},\mathbb{I})$ counts the number of such diagrams which is known to be $C^{(2)}_{n-2}$. Unfortunately, generalized Feynman diagrams (GFD) do not all evaluate to one on planar kinematics. The reason is that while some GFD's only possess poles in the planar basis and evaluate to one, other GFD's have other planar poles which are linear combinations of elements in the basis and therefore evaluate to rational numbers. 
	
	As it turns out, $k=2$ (and via duality $k=n-2$) is the only case when the dimension of the planar basis coincides with the dimension of the space of kinematic invariants. The fact that individual GFD's evaluate to rational numbers makes the counting interpretation implausible.
	
	In this section, we rewrite the sum over GFD's in a way, using an integral which we call the Global Schwinger Parametrization, that leads to a decomposition in terms of objects that evaluate to positive integer numbers.  Each of the new objects combines the contribution of several GFD's.
	
	In order to explain the construction, let us start by recalling that standard Feynman diagrams contributions to an amplitude can be thought of as the Laplace transform of certain regions in the Billera-Holmes-Vogtmann (BHV)  space of trees \cite{BilleraL}, which is also the tropical Grassmannian ${\rm Trop}\, G(2,n)$ \cite{speyer2004tropical,HJJS}.
	
	When restricting to $m_n^{(2)}(\mathbb{I},\mathbb{I})$ only planar Feynman diagrams contribute which leads to the positive tropical Grassmannian ${\rm Trop}^+G(2,n)$ introduced by Speyer and Williams in \cite{speyer2005tropical}. 
	
	The restriction to planar objects is very important because it allows us to find regions in kinematic space where the Laplace transform which computes individual Feynman diagrams exist simultaneously for all planar diagrams. This is not the case without the planarity condition, e.g. for $n=4$ there are three Feynman diagrams, with values $1/s,1/t,1/u$. In order to express one of them as a Laplace tranform of the space of trees (i.e. in a Schwinger parametrization) one needs the relevant Mandelstam invariant to be positive. However, momentum conservation $s+t+u=0$ allows at most two invariants to be positive simultaneously. Restricting to two of the three diagrams is in fact equivalent to imposing planarity. 
	
	This means that we can hope to be able to perform the Laplace transform of the whole ${\rm Trop}^+G(2,n)$ as a single integral if the elements in the planar basis are chosen to be positive.   
	
	Generalized Feynman diagrams extend the same ideas identifying $m_n^{(k)}(\mathbb{I},\mathbb{I})$ with the Laplace transform of ${\rm Trop}^+G(k,n)$. So far in the literature the Laplace transform has been carried out diagram by diagram since for generic kinematics it provides a systematic way of evaluation \cite{Borges:2019csl,Cachazo:2019xjx}. However, as mentioned above this obscures the way they should be combined when evaluated on planar kinematics. 
	
	Here we proceed by writing a formula for the Laplace transform of ${\rm Trop}^+G(k,n)$ as a single integral which on general kinematics can be decomposed in terms of individual GFD's but when evaluated on planar kinematics it performs the combination of GFD we are looking for. Note that planar kinematics sits inside the region where we expect the Laplace transform to exist. 
	
	Consider the Laplace transform representation of a single GFD, ${\cal T}$ \cite{Borges:2019csl,Cachazo:2019xjx},
	\be
	I_{\cal T} = \int d\mu_{\cal T}\, {\rm exp}\left(\,-{\cal F}\right)
	\ee
	where $d\mu_{\cal T}$ represents a measure over the space of internal edge lengths of the diagrams in the array of Feynman diagrams defining ${\cal T}$ while 
	\be\label{fino}
	{\cal F} = -\sum_{J\subset [n]:|J|=k}\s_{J} d_J.
	\ee
	Here $\s_J$ is the generalized Mandelstam invariant with a short hand notation for the indices while $d_J$ is the metric on the array of Feynman diagrams\footnote{The precise definition of arrays of Feynman diagrams and GFD is not needed in this work and we refer the reader to \cite{Borges:2019csl} and \cite{Cachazo:2019xjx} for details.} that make ${\cal T}$. If we let $J=\{j_1,\ldots ,j_k\}$ then $d_J$ is a completely symmetric tensor satisfying that the rank-two tensor constructed by fixing any $k-2$ indices and letting the remaining two vary is a metric on a binary tree (\cite{HJJS}). This means that $d_J$ satisfies all the three-term tropical Plucker vectors relations. This means that they define a point in the Dressian $Dr(k,n)$ (see \cite{speyer2004tropical,HJJS}). Restricting to planar GFD's further imposes that the tensor $d_J$ defines a point in the positive Dressian which was recently proven to be equal to the positive tropical Grassmannian ${\rm Trop}^+G(k,n)$, concurrently in \cite{speyer2020positive,Arkani-Hamed:2020cig}. 
	
	Using the connection to ${\rm Trop}^+G(k,n)$ and the fact that the invariants $\s_J$ satisfy the generalized momentum conservation one can write $d_J$ as the tropical Plucker coordinates evaluated on certain regions of ${\rm Trop}^+G(k,n)$.
	
	This means that the Laplace transform of the whole ${\rm Trop}^+G(k,n)$ must be equivalent to the sum over all GFD integrals $I_{\cal T}$,
	\begin{eqnarray}\label{eq:tropical integral0}
		m_n^{(k)}(\mathbb{I},\mathbb{I}) = \sum_{\cal T}I_{\cal T} = \int d\mu_{{\rm Trop}^+G(k,n)}\, \exp \left(\,-{\cal F}\right)
	\end{eqnarray}
	where the measure depends on the coordinates chosen. 
	
	Luckily Speyer and Williams \cite{speyer2005tropical} provided a natural construction of ${\rm Trop}^+G(k,n)$ based on the well-known positive Grassmannian $G^+(k,n)$. 
	
	The Speyer-Williams construction starts with a Web diagram and provides a matrix representative of a point in $G^+(k,n)$ as the boundary matrix of the diagram using edge variables. The $k(n-k)$ edge variables vary in $\mathbb{R}^+$ and generate $G^+(k,n)$. Given a matrix representative, Speyer and Williams proceed to map it to a point in ${\rm Trop}^+G(k,n)$ by tropicalizing the maximal minors. In the tropical object, the new ``edge" variables are now in $\mathbb{R}$. This can be understood by recalling that the tropical map can be thought of as the limit of an exponential map in which $x\in \mathbb{R}^+$ goes to $\exp (\tilde x)$ with $\tilde x \in \mathbb{R}$.   
	This immediately leads to the following formula 
	\be\label{eq:tropical integral}
	m_n^{(k)}(\mathbb{I},\mathbb{I}) =  \frac{1}{({\rm Vol}(\mathbb{R}^+))^{n-1}}\int_{\mathbb{R}^{k(n-k)}} d^{k(n-k)}\tilde x\, \exp \left(\,-{\cal F}\right)
	\ee
	with ${\cal F}$ as in \eqref{fino} but with the metric $d_J$ replaced with the tropicalized Plucker minors written in terms of the variables $\tilde x$. The reason for dividing by the volume of the torus $(\mathbb{R}^+)^{n-1}$ is the fact that the tropicalization procedure makes the scales of each column in the $k\times n$ representation of a point in $G^+(k,n)$ redundant. In physics terms, the model has a $(\mathbb{R}^+)^{n-1}$ gauge invariance. It is important to note that one of the $n$ possible rescalings has been fixed already when the standard $GL(k)$ action on the $k\times n$ matrix representatives of $G^+(k,n)$ was fixed. Once the redundancies are fixed the integral is over $\mathbb{R}^{(k-1)(n-k-1)}$ as expected.
	
	Before illustrating the construction with examples it is important to mention that a realization of the biadjoint amplitude $m^{(k)}_n(\mathbb{I},\mathbb{I})$ can also be obtained as the limit when $\alpha'\to 0$ of a string-like integral \cite{Arkani-Hamed:2019mrd}. This connection makes the evaluation of the amplitude that of a volume of a region defined in terms of tropical inequalities. Our formula, which integrates over tropicalized functions \eqref{eq:tropical integral}, seems compatible with the formulation in \cite{Arkani-Hamed:2019mrd} which computes the volume of a region defined by tropical inequalities (see Claim 3 of \cite{Arkani-Hamed:2019mrd}). Also, generalized biadjoint amplitudes have been evaluated using cluster algebra techniques \cite{Drummond:2019qjk,Drummond:2020kqg,Drummond:2019cxm,Henke:2019hve} in which the notion of a volume can be assigned to each cluster, which either coincides with a GFD or provide a refinement of one.

	Now we can proceed to the two main examples were we have done explicit computations. 
	
	\subsection{Case I: $k=2$}

	A matrix representative of a (generic) point in $G(2,n)$ can be parametrized as 
	\be
	\C =\left(
	\begin{array}{ccccccc}
		t_1 & 0 & t_3 & t_4(1+x_1) & t_5(1+x_1+x_2) & \cdots & t_n(1+x_1+x_2+\ldots +x_{n-3}) \\
		0 & t_2 & t_3 & t_4     &    t_5  &  \cdots    &  t_n \\
	\end{array}
	\right).
	\ee
	This parametrization differs slightly from that used by Speyer and Williams but the results are the same.  
	
	Let us introduce notation for the tropicalization of a Plucker coordinate:
	\be
	p_{a,b} := \det \left(\C_a,\C_b\right) \longrightarrow \tp a,b \tp.
	\ee
	Let us present some examples by computing the minors that enter when we specialize to planar kinematics. The first set is given by $p_{a,a+1}$ minors: 
	\be\label{eq:tropPlucker1}
	\begin{array}{lcl}
		p_{1,2} = t_1t_2 & \longrightarrow & \tp 1,2 \tp = {\tilde t}_1+{\tilde t}_2, \\
		p_{2,3} =-t_2 t_3 & \longrightarrow & \tp 2,3 \tp = {\tilde t}_2+{\tilde t_3}, \\
		p_{3,4} =-t_3 t_4 x_1 & \longrightarrow & \tp 3,4 \tp = {\tilde t}_3+{\tilde t}_4+{\tilde x}_1, \\
		p_{4,5} =-t_4t_5x_2 & \longrightarrow & \tp 4,5 \tp = {\tilde t}_4+{\tilde t}_5+{\tilde x}_2, \\
		& \vdots &  \\
		p_{n-1,n} = -t_{n-1}t_{n}x_{n-3} & \longrightarrow & \tp n-1,n \tp = {\tilde t}_{n-1}+{\tilde t}_n+{\tilde x}_{n-3}, \\
		p_{n,1} =-t_n t_1 & \longrightarrow & \tp n,1 \tp = {\tilde t}_{n}+{\tilde t}_1.
	\end{array}
	\ee
	
	The second set is $p_{a,a+2}$ minors: 
	
	\be\label{eq:tropPlucker2}
	\begin{array}{lcl}
		p_{1,3} = t_1t_3 & \longrightarrow & \tp 1,3 \tp = {\tilde t}_{1}+{\tilde t}_3, \\
		p_{2,4} =-t_2t_4(1+x_1) & \longrightarrow & \tp 2,4 \tp = {\tilde t}_{2}+{\tilde t}_4+\min (0,{\tilde x}_1), \\
		p_{3,5} =-t_3t_5(x_1+x_2) & \longrightarrow & \tp 3,5 \tp = {\tilde t}_{3}+{\tilde t}_5+\min ({\tilde x}_1,{\tilde x}_2), \\
		p_{4,6} =-t_4t_6(x_2+x_3) & \longrightarrow & \tp 4,6 \tp = {\tilde t}_{4}+{\tilde t}_6+\min ({\tilde x}_2,{\tilde x}_3), \\
		& \vdots &  \\
		p_{n-1,1} =-t_{n-1}t_1  & \longrightarrow & \tp n-1,1 \tp = {\tilde t}_{n-1}+{\tilde t}_1, \\
		p_{n,2} =t_n t_2(1+x_1+x_2+\ldots + x_{n-3}) & \longrightarrow & \tp n,2 \tp = {\tilde t}_{n}+{\tilde t}_2+\min (0,{\tilde x}_1,{\tilde x}_2,\ldots ,{\tilde x}_{n-3}).
	\end{array}
	\ee
	
	The last ingredient is to write 
	\be\label{tropF}
	{\cal F} =-\sum_{a<b} s_{ab} d_{ab} = \sum_{a<b} s_{ab} \tp a,b\tp. 
	\ee
	The minus sign on the RHS is needed to match the definition of $d_{ab}$ as a metric on trees.

	Note that ${\cal F}$ is independent of all ${\tilde t}$'s. In fact, the ${\tilde t}$'s could be identified with the lengths of the leaves once the integral is separated into individual trees. In order to see the independence note that every tropical minor in \eqref{tropF} has the form
	\be
	\tp a,b\tp = {\tilde t}_{a}+{\tilde t}_b+ \ldots, 
	\ee
	where we have exhibited all ${\tilde t}$ dependence. Using that the kinematic invariants satisfy
	\be
	s_{aa}=0\quad {\rm and}\quad \sum_{b=1}^n s_{ab} = 0 \quad \forall\, a
	\ee
	it is simple to see that all ${\tilde t}$'s drop out of ${\cal F}$. This means that the integrals over ${\tilde t}$'s factor out and cancel with the volume factors in \eqref{eq:tropical integral}.  
	
	Now we are ready to write down the Laplace transform over the entire ${\rm Trop}^+G(2,n)$ as a single integral,
	\be\label{intBA}
	m_n^{(2)}(\mathbb{I},\mathbb{I}) = \prod_{a=1}^{n-3}\int_{-\infty}^\infty d{\tilde x}_a\, \exp{\left(\,-{\cal F}\right)}.
	\ee
	Here we have dropped ${\tilde t}$ terms in ${\cal F}$.

	The combinatorial geometric interpretation which underlies the evaluation of $m^{(2)}_5$ is depicted in Figure \ref{fig:troppotheightfunction}.
\begin{figure}[h!]
	\centering
	\includegraphics[width=0.75\linewidth]{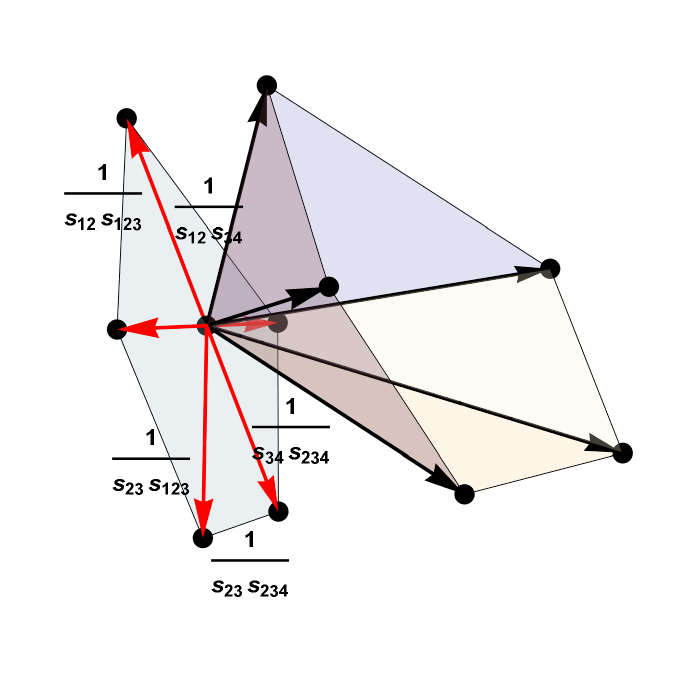}
	\caption{The Global Schwinger Parametrization, viewed as a projection of the positive tropical Grassmannian $\text{Trop}^+G(2,5)$; this is a projection of the root polytope. Integrating the piecewise-linear function $\mathcal{F}_5$, each of the five sectors contributes a single Feynman diagram. Rays are duality with planar kinematic invariants $s_{i\cdots j}$.}
	\label{fig:projectiontropgrass25}
\end{figure}
	
	Specializing to planar kinematics gives rise to 
	\be\label{notem}
	{\cal F}_{\rm PK} = \sum_{a=1}^{n-3}{\tilde x}_a -\min (0,{\tilde x}_1) - \sum_{a=1}^{n-4}\min ({\tilde x}_a,{\tilde x}_{a+1}) -  \min (0,{\tilde x}_1,{\tilde x}_2,\ldots ,{\tilde x}_{n-3}).  
	\ee
	Already the $k=2$ case is interesting because writing the amplitude on planar kinematics using \eqref{notem} requires a decomposition of the integration domain in \eqref{intBA} into regions where ${\cal F}_{\rm PK}$ becomes linear. The number of regions is much smaller than the number of standard Feynman diagrams. In fact, it coincides with the number of linear trees (see OEIS entry A045623,  \cite{oeis}) as we prove below. 
	
	\begin{prop}\label{linear trees}
		The number of regions needed to expand the piecewise function, ${\cal F}_{n,\rm PK}$ defined in \eqref{notem} is equal to the number of linear trees with $n$ leaves, i.e.
		$$ N_{n,\rm linear\, trees} = n \, 2^{n-5}. $$
	\end{prop}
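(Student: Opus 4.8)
The plan is to reformulate $\mathcal{F}_{n,\mathrm{PK}}$ as an irredundant min-representation of a concave piecewise-linear function and to identify its domains of linearity with an explicit combinatorial set of ``valley-pairs'', which is then counted directly.

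\emph{Reformulation.} Put $m:=n-3$ and introduce a dummy coordinate $\tilde x_0:=0$, so that
\[
\mathcal{F}_{n,\mathrm{PK}}\;=\;-\sum_{a=1}^{m}\tilde x_a\;+\;\sum_{j=1}^{m}\min(\tilde x_{j-1},\tilde x_j)\;+\;\min_{0\le\ell\le m}\tilde x_\ell .
\]
This is a concave piecewise-linear function on $\mathbb{R}^m$. Expanding each inner $\min$ by a binary choice encoded by a descent vector $b\in\{0,1\}^m$ (with $b_j=1$ meaning the $j$-th $\min$ equals $\tilde x_j$ and $b_j=0$ meaning it equals $\tilde x_{j-1}$), and expanding the global $\min$ by a choice $i\in\{0,\dots,m\}$, one gets
\[
\mathcal{F}_{n,\mathrm{PK}}\;=\;\min_{\,b\in\{0,1\}^m,\ 0\le i\le m}\;f_{b,i},\qquad f_{b,i}(\tilde x):=-\sum_{a=1}^m\tilde x_a+\sum_{j=1}^m\bigl(b_j\tilde x_j+(1-b_j)\tilde x_{j-1}\bigr)+\tilde x_i .
\]
By definition, the number of regions needed to expand $\mathcal{F}_{n,\mathrm{PK}}$ is the number of maximal connected sets on which it is affine, equivalently the number of affine functions $f_{b,i}$ occurring in the unique irredundant min-representation of this concave function.

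\emph{Identifying the regions.} To a generic $\tilde x\in\mathbb{R}^m$ attach its descent vector $b(\tilde x)\in\{0,1\}^m$ and the unique minimizer $i(\tilde x)$ of $\tilde x_0,\dots,\tilde x_m$. I would prove: (i) on the relatively open polyhedral cone $U_{b,i}:=\{\tilde x:\ b(\tilde x)=b,\ i(\tilde x)=i\}$ one has $\mathcal{F}_{n,\mathrm{PK}}=f_{b,i}$ identically, so $\mathcal{F}_{n,\mathrm{PK}}$ is affine on $\overline{U_{b,i}}$; (ii) $U_{b,i}\neq\varnothing$ exactly when $i$ is a \emph{valley} of $b$, i.e.\ $i=0$ and $b_1=0$, or $i=m$ and $b_m=1$, or $1\le i\le m-1$ with $b_i=1$ and $b_{i+1}=0$ --- realizability is immediate by assigning values consistent with $b$ and making $\tilde x_i$ strictly the smallest; (iii) distinct valley-pairs $(b,i)$ give distinct affine functions, hence distinct cones. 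Given (i)--(iii), the closed cones $\overline{U_{b,i}}$ over valley-pairs form a complete polyhedral fan which cannot be coarsened (each facet is a genuine bend: crossing it flips one descent bit or moves the global minimizer, and by (iii) this changes the affine form), so the number of regions equals the number of valley-pairs. For (iii) the key computation is that the gradient $c=(c_1,\dots,c_m)$ of $f_{b,i}$ is $c_m=b_m-1+[\,i=m\,]$ and $c_k=b_k-b_{k+1}+[\,i=k\,]$ for $k<m$ (here $[P]=1$ if $P$ holds, else $0$); from $c_m$ one reads off $b_m$ and whether $i=m$, from the unique index $k\le m-1$ with $c_k=2$ one reads off $i$ when $1\le i\le m-1$ (and $i=0$ if no such $k$), and then $b_{m-1},\dots,b_1$ follow by back-substitution --- so $c$ determines $(b,i)$.

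\emph{Counting and matching with linear trees.} The number of valley-pairs is $\sum_{i=0}^{m}\#\{b\in\{0,1\}^m:\ i\text{ is a valley of }b\}$. The value $i=0$ imposes only $b_1=0$, giving $2^{m-1}$ choices; $i=m$ imposes only $b_m=1$, giving $2^{m-1}$; each of the $m-1$ interior values $i$ imposes $b_i=1$ and $b_{i+1}=0$, giving $2^{m-2}$. Hence the total is $2^m+(m-1)2^{m-2}=(m+3)2^{m-2}=n\,2^{n-5}$, using $m=n-3$. Finally, $n\,2^{n-5}$ is precisely the number of linear (caterpillar) trees with $n$ leaves --- the sequence A045623 in \cite{oeis} --- which one verifies against its standard recursion or via the explicit bijection recording the left/right attachments along the spine of a caterpillar together with the placement of its two extreme leaves. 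The substantive step is the middle paragraph: showing that the cones $U_{b,i}$ over valley-pairs are \emph{exactly} the maximal linearity regions --- both that none can be merged (statement (iii), via the gradient computation) and that none must be subdivided (which is immediate once $\mathcal{F}_{n,\mathrm{PK}}=f_{b,i}$ on all of $U_{b,i}$ and these cones already tile $\mathbb{R}^m$). Reformulating ``counting linearity regions'' as an irredundant min-representation of a concave piecewise-linear function is what makes the enumeration clean.
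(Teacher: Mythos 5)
Your proof is correct and identifies the same underlying combinatorial structure as the paper---maximal linearity regions correspond to pairs $(b,i)$ where $b\in\{0,1\}^{n-3}$ is a descent pattern (the paper's ``mountain range'') and $i$ is a valley of $b$---but the counting strategy is genuinely different. The paper fixes the descent pattern, counts its valleys, and arrives at the answer via the recursion $N_n=2N_{n-1}+2^{n-5}$; you instead fix the valley position $i$ and count the compatible descent patterns directly ($2^{m-1}$ for each of the two endpoint positions $i=0$ and $i=m$, and $2^{m-2}$ for each of the $m-1$ interior positions, with $m=n-3$), obtaining the closed form $(m+3)2^{m-2}=n\,2^{n-5}$ in one step. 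This swap of summation order is cleaner and avoids the recursion altogether. You also supply the gradient computation showing that distinct valley-pairs yield distinct affine forms, hence distinct maximal linearity regions; the paper implicitly assumes this when it equates the number of regions with $\sum_b V(R_b)$, so your argument is somewhat more rigorous on that point. Both proofs then simply match the resulting formula against OEIS A045623 rather than giving an explicit bijection to linear (caterpillar) trees.
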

	\begin{figure}[!h]%\label{fig:octahedron}
		\centering
		\includegraphics[width=1\linewidth]{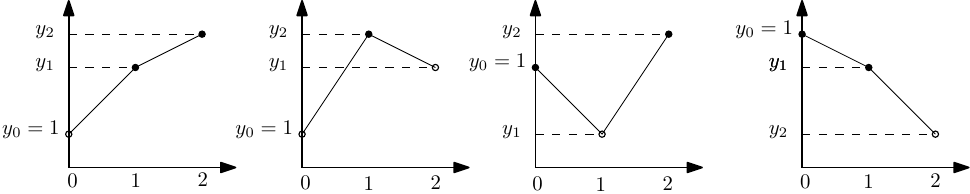}
		\caption{Counting argument used in the proof of Proposition \ref{linear trees}.}
	\end{figure}
	In order to prove the proposition and also to more easily evaluate the integral it is convenient to use 
	$$ \exp \left( \min (a_1,a_2,\ldots ,a_m) \right) = \min \left( \exp(a_1),\exp (a_2), \ldots ,\exp (a_m) \right), $$
	which follows from the fact that the exponential is a monotonically increasing function, in order rewrite the integral \eqref{notem}. Using a change of variables $y_a = \exp ({\tilde x}_a)$ one finds 
	\be\label{monra}
	m_n^{(2)}(\mathbb{I},\mathbb{I}) =  \prod_{a=1}^{n-3}\int_0^\infty \frac{dy_a}{y_a^2}\, \min (1,y_1)\prod_{a=1}^{n-4}\!\min(y_a,y_{a+1})\,\min(1,y_1,y_2,\ldots ,y_{n-3}).
	\ee
	
	Consider the first $n-3$ factors in the integrand and note that each one proves a choice between two options. For example, $\min(1,y_1)$ gives either $1<y_1$ or $1>y_1$. Therefore there are $2^{n-3}$ possibilities. Let $y_0:=1$. For any given one of the $2^{n-3}$ possibilities one can draw a mountain range picture by plotting the values of $\{ y_0,y_1,y_2,\ldots ,y_{n-3}\}$ in that order. Having constructed a mountain range it is easy to find out the number of options provided by the last factor in the integrand \eqref{monra}. The function $\min(1,y_1,y_2,\ldots ,y_{n-3})$ can only pick values from the valleys in the mountain range. Therefore the number of regions is given by 
	\be
	N_{n} = \sum_{i=1}^{2^{n-3}}V(R_i) 
	\ee
	where $V(R_i)$ is the number of valleys in the mountain range $R_i$. 
	
	Now let us find a recursion relations for $N_n$. Separate the ranges according to whether the first interval is up ($y_0<y_1$) or down ($y_0>y_1$). More explicitly, 
	\be
	N_n = \sum_{i=1}^{2^{n-4}}V(R_{i,D_1}) + \sum_{i=1}^{2^{n-4}}V(R_{i,U_1}),
	\ee
	where $R_{i,U_1}$ ($R_{i,U_1}$) are mountain ranges where the first interval is up $U_1$ (or down $D_1$).
	
	Clearly, if we go down then the remaining $n-4$ steps have the same number of valleys as they would if $y_0$ was removed. This means that the sum over them contributes $N_{n-1}$ to $N_{n}$. More explicitly,
	\be
	N_n = N_{n-1} + \sum_{i=1}^{2^{n-4}}V(R_{i,U_1}).
	\ee
	Next, we separate the ranges $R_{i,U_1}$ according to whether the second step is up or down. If the second step is down then one gets the contributions from $N_{n-2}$ plus one additional choice from the valley at $y_0$ for each of the graphs, i.e. a total of $2^{n-5}$. This gives 
	\be
	N_n = N_{n-1} + (N_{n-2}+2^{n-5})+\sum_{i=1}^{2^{n-5}}V(R_{i,U_1,U_2}).
	\ee
	Recursing the argument leads to
	\be
	N_n = N_{n-1}+\sum_{i=3}^{n-2}\left(N_i+2^{i-3}\right) +1 \quad {\rm with} \quad N_3=1.
	\ee
	Simplifying the recursion gives
	\be
	N_n = 2 N_{n - 1} + 2^{n - 5} \quad {\rm with} \quad N_3=1, \, N_4=5.
	\ee
	In this new form it is easy to the see that the solution agrees with the number of linear trees
	$$ N_n = N_{n,\rm linear\, trees} = n \, 2^{n-5} $$
	as expected.
	
	The first example in which there are trees that are not linear is $n=6$. In fact, there are exactly $2$ non-linear (snowflake) trees and $12$ linear ones. Computing the integral \eqref{monra}, i.e. 
	\be\nonumber
	m_6^{(2)}(\mathbb{I},\mathbb{I}) = \int_0^\infty \frac{dy_1}{y_1^2}\int_0^\infty \frac{dy_2}{y_2^2}\int_0^\infty \frac{dy_3}{y_3^2}\, \min (1,y_1)\min(y_1,y_2)\min(y_2,y_3)\min(1,y_1,y_2,y_3)
	\ee
	is a simple exercise once the integrand is separated into the $12$ regions, with $10$ regions evaluating to $1$ and two regions evaluating to $2$. Adding up gives 
	\be\nonumber
	m_6^{(2)}(\mathbb{I},\mathbb{I}) = 10\times 1 + 2\times 2 = 14
	\ee
	as expected.
	
	\subsection{Case II: $k=3$}
	
	Having seen that the ${\tilde t}$ scale factors drop from all computations, it is convenient to set them to one, i.e. ${\tilde t}_a=1$, from the start and use a matrix representative of a point in $G(3,n)$ of the form 
	\be
	\C =\left(
	\begin{array}{ccccccc}
		1 & 0 & 0 & 1 & 1+(1+y_1)x_1 & 1+(1+y_1)x_1+(1+y_1+y_2)x_2 & \cdots \\
		0 & 1 & 0 & 1 & 1+x_1 & 1+x_1+x_2 & \cdots  \\
		0 & 0 & 1 & 1 & 1      &    1  &  \cdots  \\
	\end{array}
	\right).
	\ee
	
	Once again this parametrization slightly differs from that used in \cite{speyer2005tropical} but the results are the same\footnote{Note that in the parameterization of the nonnegative Grassmannian, the entries in the second row of the matrix would usually come with minus signs; but for our purposes this is not necessary and we omit them.}.
	
	Let us present the minors that appear in ${\cal F}$ when evaluated on planar kinematics and their tropicalization. Once again the first set is given by $p_{a,a+1,a+2}$,
	\be\label{eq:tropPlucker3}
	\begin{array}{lcl}
		p_{1,2,3} = 1 & \longrightarrow & \tp 1,2,3 \tp = 0, \\
		p_{2,3,4} =1 & \longrightarrow & \tp 2,3,4 \tp = 0, \\
		p_{3,4,5} =-x_1y_1 & \longrightarrow & \tp 3,4,5 \tp = {\tilde x}_1+{\tilde y}_1, \\
		p_{4,5,6} =-x_1x_2y_2 & \longrightarrow & \tp 4,5,6 \tp = {\tilde x}_1+{\tilde x}_2+{\tilde y}_2, \\
		& \vdots &  \\
		p_{n-2,n-1,n} = -x_{n-5}x_{n-4}y_{n-4} & \longrightarrow & \tp n-2,n-1,n \tp = {\tilde x}_{n-5}+{\tilde x}_{n-4}+{\tilde y}_{n-4}, \\
		p_{n-1,n,1} =-x_{n-4} & \longrightarrow & \tp n-1,n,1 \tp = {\tilde x}_{n-4}, \\
		p_{n,1,2} =1 & \longrightarrow & \tp n,1,2 \tp = 0.
	\end{array}
	\ee
	
	The second set is given by minors of the form $p_{a,a+1,a+3}$,
	\be\label{eq:tropPlucker4}
	\begin{array}{lcl}
		p_{1,2,4} = 1 & \longrightarrow &  0, \\
		p_{2,3,5} =1+x_1+x_1y_1 & \longrightarrow & \min (0,{\tilde x}_1,{\tilde x}_1+{\tilde y}_1), \\
		p_{3,4,6} =-(x_1y_1+x_2y_1+x_2y_2) & \longrightarrow &  \min({\tilde x}_1+{\tilde y}_1,{\tilde x}_2+{\tilde y}_1,{\tilde x}_2+{\tilde y}_2), \\
		p_{4,5,7} =-x_1 (x_2 y_2+x_3 y_2+x_3 y_3) & \longrightarrow &  {\tilde x}_1+\min ({\tilde x}_2+{\tilde y}_2,{\tilde x}_3+{\tilde y}_2,{\tilde x}_3+{\tilde y}_3), \\
		& \vdots &\\
		p_{n-2,n-1,1} =y_{n-3-1}(1+x_1+\cdots +x_{n-3-1}) & \longrightarrow &  {\tilde y}_{n-k-1}+\min (0,\tilde{x}_1,\ldots, \tilde{x}_{n-3-1}), \\
		p_{n-1,n,2} =y_{n-3-1}(1+x_1+\cdots +x_{n-3-1}) & \longrightarrow &  {\tilde y}_{n-k-1}+\min (0,\tilde{x}_1,\ldots, \tilde{x}_{n-3-1}), \\
		p_{n,1,3} =-(1+y_1+\cdots +y_{n-3-1}) & \longrightarrow &  \min (0,\tilde{y_1},\ldots, \tilde{y}_{n-3-1}).\\
	\end{array}
	%\begin{array}{lcl}
	%p_{1,2,4} = 1 & \longrightarrow &  0, \\
	%p_{2,3,5} =1+x_1+x_1y_1 & \longrightarrow & \min (0,{\tilde x}_1,{\tilde x}_1+{\tilde y}_1), \\
	%p_{3,4,6} =-(x_1y_1+x_2y_1+x_2y_2) & \longrightarrow &  \min({\tilde x}_1+{\tilde y}_1,{\tilde x}_2+{\tilde y}_1,{\tilde x}_2+{\tilde y}_2), \\
	%p_{4,5,7} =-x_1 (x_2 y_2+x_3 y_2+x_3 y_3) & \longrightarrow &  {\tilde x}_1+\min ({\tilde x}_2+{\tilde y}_2,{\tilde x}_3+{\tilde y}_2,{\tilde x}_3+{\tilde y}_3), \\
	% & \vdots &\\
	% p_{n-2,n-1,1} =y_{n-k-1}(1+x_1+\cdots +x_{n-k-1}) & \longrightarrow &  {\tilde y}_{n-k-1}+\min (0,\tilde{x}_1,\ldots, \tilde{x}_{n-k-1}), \\
	%p_{n-1,n,2} =y_{n-k-1}(1+x_1+\cdots +x_{n-k-1}) & \longrightarrow &  {\tilde y}_{n-k-1}+\min (0,\tilde{x}_1,\ldots, \tilde{x}_{n-k-1}), \\
	%p_{n,1,3} =-(1+y_1+\cdots +y_{n-k-1}) & \longrightarrow &  \min (0,\tilde{y_1},\ldots, \tilde{y}_{n-k-1}).\\
	%\end{array}
	\ee
	
	See also Equation \eqref{PK minors web} for the general formula for the minors in the web parameterization.

	\subsubsection{Examples}
	
	Let us provide some example to show how specializing to planar kinematics before integrating over ${\rm Trop}^+G(3,n)$ gives rise to a different splitting into objects, each of which giving an integer contribution.
	
	The simplest case is $k=3$ and $n=6$. In order to simplify the notation we use $\{ x_a,y_a\}$ instead of $\{ {\tilde x}_a,  {\tilde y}_a \}$ for the integration variables. 
	
	The integral to be performed is
	\be
	\int_{\mathbb{R}^2} d^2x \int_{\mathbb{R}^2}d^2y\, \exp \left(-x_1- x_2-y_1-y_2+ G(x_1,x_2,y_1,y_2)\right),
	\ee
	with $G$ a piece-wise linear function
	\be
	\begin{array}{rcl}
		G(x_1,x_2,y_1,y_2) & :=  & \min \left(0,x_1,x_1+y_1\right)+\min
		\left(0,y_1,y_2\right)+ \\  &  & \min
		\left(x_1+y_1,x_2+y_1,x_2+y_2\right)+\min
		\left(0,x_1,x_2\right).
	\end{array}   
	\ee
	Here we have used that $\min
	\left(x_2,x_2+y_1,x_2+y_2\right) = x_2+\min
	\left(0,y_1,y_2\right)$.
	
	In the examples which follow, note that, as expected, the numbers of linear domains (regions) coincides with the numbers of facets in the respective root polytopes $\mathcal{R}_{k,n}$, as provided in the f-vectors listed in Example \ref{example: P36}).
	
	Separating the integration region into $27$ parts turns  $G(x_1,x_2,y_1,y_2)$ into a linear function in each. Note that if we had used generic kinematics, the corresponding piece-wise linear function would have required $48$ regions, i.e. the number of generalized Feynman diagrams. Evaluating the integral over the $27$ regions reveals three kinds of contributions. There are $16$ which contribute $1$, $10$ which contribute $2$ and a single one which contributes $6$ to the total integral. Combining the contributions gives rise to the value of the amplitude on planar kinematics
	\be
	m_6^{(3)}({\mathbb{I}},{\mathbb{I}}) = 16\times 1+10\times 2+1\times 6 = 42.
	\ee
	In order to express our results for $n=7$ and $n=8$ it is convenient to introduce a vector of values $v=(1,2,6)$ and one of the frequencies in which they appear, i.e. $f=(16,10,1)$ so that $f\cdot v=14$.
	
	For $k=3$ and $n=7$ we find $128$ regions and $10$ different values. The explicit results are:
	\be
	v = (1, 2, 3, 4, 5, 6, 8, 11, 12, 25), \quad f = (21, 38, 32, 8, 14, 2, 6, 3, 2, 2).
	\ee
	In this case $f\cdot v=462$ as expected. 
	
	We have also carried out the $n=8$ computation. There are $557$ regions and $36$ distinct values. The explicit results are:
	\be
	\begin{array}{rcl}
		v & = & (1, 2, 3, 4, 5, 6, 7, 8, 9, 10, 11, 12, 14, 15, 16, 17, 18, 20, 21, 24, 25, 26, 28, 30, 32, 33, 40, 42, 49, \\
		& & 54, 57, 75, 77, 93, 98, 169),\\
		f & = & (23, 42, 46, 57, 64, 47, 26, 18, 52, 26, 11, 20, 18, 6, 8, 10, 8, 10, 10, 4, 10, 6, 2, 7, 2, 2, 4, 2, 4, 2, \\ 
		& & 2, 2, 2, 1, 2, 1).
	\end{array}
	\ee
	Once again this leads to the expected result $f\cdot v=6\, 006$. 
	
	These decomposition of the integrals over ${\rm Trop}^+G(3,n)$ also provide a decomposition of the three-dimensional Catalan numbers. We leave the interpretation of this construction of $C^{(3)}_{n-3}$ for future work.

	Let us finally record the general formula for the integrand.  Define
	\begin{eqnarray}\label{PK minors web}
		P_i(\mathbf{x}) & = & \sum_{j=1}^{n-k} x_{i,j},\\
		Q_j(\mathbf{x}) & = & x_{1,j}x_{2,j}\cdots x_{k-1,j} + x_{1,j}x_{2,j}\cdots x_{k-2,j}x_{k-1,j+1}+x_{1,j}x_{2,j}\cdots x_{k-3,j}x_{k-2,j+1}x_{k-1,j+1}\nonumber\\
		& + & \cdots  +x_{1,j+1}x_{2,j+1}\cdots x_{k-1,j+1}.
	\end{eqnarray}
	\begin{claim}
		In the web parameterization, for the planar kinematics potential function we have
		\begin{eqnarray}\label{PK potential general form web}
			\mathcal{S}^{(PK)}_{k,n} =\log\left(\frac{\prod_{i=1}^{k-1}P_i(\mathbf{x})\prod_{j=1}^{n-k-1}Q_j(\mathbf{x})}{\prod_{(i,j)}x_{i,j}}\right),
		\end{eqnarray}
		where in the denominator $(i,j)$ ranges over the set 
		$$(i,j) \in \{1,\ldots, k-1 \} \times \{1,2,\ldots, n-k\}.$$
	\end{claim}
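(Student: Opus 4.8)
The statement is an identity of rational functions in the web variables $x_{i,j}$, so it suffices to evaluate the two products of Pl\"ucker minors that enter $\spk$ and compare. I would begin by writing out explicitly, in the normalization used above for $k=2,3$, the $k\times n$ matrix $C(\mathbf x)$ of the (Speyer--Williams) web parameterization: columns $1,\dots,k$ form the identity block, and the entry of $C$ in row $r$, column $k+m$ is the generating polynomial of the non-crossing flows in the associated staircase web. Two structural features drive everything: $x_{i,j}$ occurs only in columns $\ge k+j$, and a cyclically consecutive window $\{i,\dots,i+k-1\}$ is fed by a \emph{unique} flow from the source block, whereas a one-gap window $\{i,\dots,i+k-2,i+k\}$ is fed only by those flows that differ in how the path reaching column $i+k$ detours around column $i+k-1$.

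From the first feature, each consecutive minor $p_{i,i+1,\dots,i+k-1}(C)$ is a single monomial $\varepsilon_i\prod_{(r,m)\in S_i} x_{r,m}$ with an explicit exponent set $S_i$ (equivalently, compute it by iterated cofactor expansion down the columns meeting the identity block), so the numerator product is a monomial $\prod_{i=1}^n p_{i,\dots,i+k-1}(C)=\varepsilon\prod_{(r,m)} x_{r,m}^{a_{r,m}}$. From the second feature, expanding $p_{i,i+1,\dots,i+k-2,i+k}(C)$ along column $i+k$ --- whose complementary $(k-1)\times(k-1)$ minors are consecutive-type and hence again monomials --- exhibits a factorization $p_{i,\dots,i+k-2,i+k}(C)=\varepsilon_i'\big(\prod_{(r,m)\in S_i'} x_{r,m}\big)\,F_i$ with $F_i\in\{1\}\cup\{P_1,\dots,P_{k-1}\}\cup\{Q_1,\dots,Q_{n-k-1}\}$; a window-by-window check should show that a staircase-interior window produces the local $k$-term flow sum $Q_m$, a window straddling the source block produces a $P_r$, and exactly two windows produce bare monomials, so that over all $i$ the factors $F_i$ realize each $Q_m$ once, each $P_r$ once, and $1$ twice. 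Multiplying, $\prod_{i=1}^n p_{i,\dots,i+k-2,i+k}(C)=\varepsilon'\big(\prod_{(r,m)} x_{r,m}^{a_{r,m}'}\big)\prod_{r=1}^{k-1} P_r(\mathbf x)\prod_{m=1}^{n-k-1} Q_m(\mathbf x)$. Dividing the two products and exponentiating then shows $\exp\spk$ equals $(\varepsilon/\varepsilon')\prod_{(r,m)} x_{r,m}^{a_{r,m}-a_{r,m}'}$ over $\prod_r P_r\prod_m Q_m$, and the content of the claim is precisely that $a_{r,m}-a_{r,m}'=-1$ for every $(r,m)$ and $\varepsilon/\varepsilon'=1$ --- the exact, non-tropical form of the telescopic cancellation already seen for $k\le 7$ --- after which the stated identity follows up to the overall sign and inversion fixed by the normalization of $\spk$ in \eqref{eq: planar kinematics scattering equations}.

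I expect the main obstacle to be exactly that middle step: matching the non-monomial factor of each one-gap minor with the correct $P_r$ or $Q_m$, especially for the boundary (wrap-around) windows $\{i,\dots,i+k-2,i+k\}$ that straddle column $n$ and the identity block, where $C$ loses its clean staircase form and the complementary minors must be worked out by hand; and then pushing through the exponent and sign bookkeeping needed to make the telescoping exact. A potentially cleaner route is an induction on $n-k$, peeling off the last column of $C$ and tracking how the $2n$ relevant minors together with the $P_r$ and the $Q_m$ degenerate; or carrying out the whole argument inside the flow model for the web, so that both the monomiality of the consecutive minors and the $P/Q$-factorization of the one-gap minors are manifest from the outset.
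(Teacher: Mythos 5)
The paper offers no proof of this Claim: it is stated as a ``Claim'' and then reported only as having been ``checked \ldots\ explicitly for nontrivial values of $(k,n)$, including $(k,n)\in\{(5,19),(6,18)\}$.'' So a derivation directly from the web matrix, as you propose, would be genuinely new, and the overall strategy (show the $n$ consecutive minors are monomials, show each of the $n$ one-gap minors factors as a monomial times one of $P_1,\ldots,P_{k-1},Q_1,\ldots,Q_{n-k-1}$ or $1$, then telescope the exponents) is a reasonable one.

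However, the plan as written cannot close, and the obstruction is not the bookkeeping you worry about. You argue---correctly, for a staircase web chart; the paper's own Equations \eqref{eq:tropPlucker3}--\eqref{eq:tropPlucker4} confirm this for $k=3$---that the consecutive minors are monomials and the one-gap minors carry the $P_r$ and $Q_m$ factors, and so you arrive at $\exp\spk = \prod_i p_{i,\ldots,i+k-1}\big/\prod_i p_{i,\ldots,i+k-2,i+k}$ equal to a monomial divided by $\prod_r P_r\prod_m Q_m$. But the Claim asserts $\exp\spk = \prod_r P_r\prod_m Q_m\big/\prod_{(i,j)}x_{i,j}$, with the polynomial factor on the \emph{opposite} side of the fraction. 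No choice of the exponent $a-a'$ and no overall sign reconciles these two forms, since they differ by the non-unit $\big(\prod x\cdot \prod P\prod Q\big)^2$. You notice the tension and defer it to ``the overall sign and inversion fixed by the normalization of $\spk$,'' but the normalization in Equation \eqref{eq: planar kinematics scattering equations} is unambiguous. An explicit $k=2$, $n=5$ check with the Section~7.2 chart (set the scale variables to one) gives $\prod_i p_{i,i+1}=x_1x_2$ and $\prod_i p_{i,i+2}=-(1+x_1)(x_1+x_2)(1+x_1+x_2)$, i.e.\ $\exp\spk=-\prod x_{i,j}\big/\big(\prod_r P_r\prod_m Q_m\big)$, which is $-1/L$ where $L$ is the Laurent polynomial in the Claim. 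So the Claim as printed is, up to sign, a formula for $\exp(-\spk)$, equivalently for the potential taken with the one-gap minors in the numerator (as in the Marsh--Rietsch superpotential), and a correct proof must identify and fix this inversion rather than hand-wave it. The same computation shows your asserted exponent $a_{r,m}-a'_{r,m}=-1$ should be $+1$. Finally, you never actually fix the web chart: the Speyer--Williams matrix written in Section~7.2 has $(k-1)(n-k-1)$ variables, whereas the Claim is in $(k-1)(n-k)$ variables $x_{i,j}$ (related by specializing $x_{i,1}=1$), and the minor-by-minor matching of each one-gap window to its $P_r$, $Q_m$, or $1$ factor---the very step you flag as the ``main obstacle''---has to be carried out in the chart whose monomial denominator is $\prod_{(i,j)}x_{i,j}$, which you leave undone.
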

	
	We have checked this formula explicitly for nontrivial values of $(k,n)$, including $(k,n)\in \{(5,19),(6,18)\}$.
	
	Now setting all $x_{i,1} = 1$ for all $i=1,\ldots, k-1$ and then tropicalizing, we obtain the integrand of Equation \eqref{eq:tropical integral} specialized to planar kinematics.

	\section{Planar Scattering Equation in Terms of Cross-ratios and an Involution}\label{sec: PK scattering equations solutions are cyclic fixed points}
	
	In this section, we derive a projectively invariant expression for the $k=3$ and $k=4$ scattering equations in terms of cross-ratios which makes manifest the flip symmetry.  Flip symmetry in the kinematic space arises by the involution $j \mapsto n+1-j$, while the analogous action on the space of solutions of the PK scattering equations is by complex conjugation.

	It is not difficult to show that the planar kinematics scattering equations for $k=3$ have the  projectively invariant form
	\begin{eqnarray}\label{}
		\frac{p_{1,2,3}\, p_{2,4,5}\, p_{3,4,6}}{p_{1,2,4}\, p_{2,3,4}\, p_{3,5,6}}& = & 1\\
		\frac{p_{2,3,4}\, p_{3,5,6}\, p_{4,5,6}\, p_{4,5,7}}{p_{2,4,5}\, p_{3,4,5}\, p_{3,4,6}\, p_{5,6,7}} & = & 1,\nonumber
	\end{eqnarray}
	and the cyclic index permutations under the transformation $j\mapsto j+1$ modulo $n$, or equivalently
	that is
	\begin{eqnarray}\label{eqn:determinantal PK scattering}
		\frac{p_{1,2,3}\, p_{2,4,5}\, p_{3,4,6}}{p_{1,2,4}\, p_{2,3,4}\, p_{3,5,6}}& = & 1\\
		\frac{p_{1,2,4}\, p_{3,4,5}\, p_{3,5,6}}{p_{1,3,4}\, p_{2,3,5}\, p_{4,5,6}} & = & 1,\nonumber
	\end{eqnarray}
	together with all of equations obtained under cyclic index permutation.

	This form of the equations has the advantage that it makes manifest the fact that on PK the equations have a symmetry not shared by the definition of the kinematics. 
	
	% ------
	We define an involution on the kinematic space $\mathcal{K}(k,n)$ by $e^J \mapsto e^{J'}$, where $\{e^J: J\in \binom{\lbrack n\rbrack}{k}\}$ is the standard basis for $\mathbb{R}^{\binom{n}{k}}$, and where $J'$ is the flip of $J$: 
	$$\{j_1,\ldots, j_k\} \mapsto  \{n+1-j_1,\ldots, n+1-j_k\}.$$
	It follows that the new, flipped planar kinematics is now characterized by the equations
	$$\left\{\eta_J = 1: J\in \binom{\lbrack n\rbrack}{k}^{nf} \right\}\mapsto \left\{\eta_J = \ell-1: J\in \binom{\lbrack n\rbrack}{k}^{nf} \right\}$$
	% 	$$\eta_J = 1\ \ \mapsto  \ \ \eta_J = \ell-1,$$
	where $\ell$ is the number of cyclic intervals in $J$.
	
	Here $\binom{\lbrack n\rbrack}{k}^{nf}$ is the set of $k$-element subsets $J = \{j_1,\ldots, j_k\}$ that decompose the cycle $(1,2,\ldots, n)$ into at least two cyclic intervals.
	
	The effect for the coordinate functions $s_J$'s is to replace the conditions 
	$$\left\{ \s_{i,i+1,\ldots i+k-1,i+k+1}=-1: i=1,\ldots, n \right\}$$ by 
	$$\left\{\s_{i,i+2,\ldots i+k,i+k+1}=-1: i=-1,\ldots, n  \right\}.$$
	In other words, the invariants with a gap on the right are replaced by those with a gap on the left. Note that for $k=2$ there is no distinction between left and right and therefore the kinematics is invariant.
	
	Having defined the involution on the kinematic space, one can compute the new scattering equations associated to it. In the cross ratio form it is clear that the equations are invariant under the transformation. 
	
	Therefore, all solutions to the PK scattering equations are also solutions to the transformed version. This raises the question of whether there is an avatar of the involutive transformation on the solutions which maps them among themselves. 
	
	The way to find this out is the following. For any solution to the PK scattering equations defined by $\{ \w_1=q^{m_1},\w_2=q^{m_2}\}$ and $q=\exp (2\pi i/n)$, one has
	\be\label{direct}
	p_{a,a+1,a+3}=\det \left(
	\begin{array}{ccc}
		1 & 1 & 1 \\
		\w_1^a & \w_1^{a+1} & \w_1^{a+3} \\
		\w_2^a & \w_2^{a+1} & \w_2^{a+3}
	\end{array}
	\right) = (\w_1 \w_2)^a(1+\w_1+\w_2)(\w_1-1)(\w_1-\w_2)(\w_2-1)
	\ee
	and
	\be\label{reverse}
	p_{a,a+2,a+3}=\det \left(
	\begin{array}{ccc}
		1 & 1 & 1 \\
		\w_1^a & \w_1^{a+2} & \w_1^{a+3} \\
		\w_2^a & \w_2^{a+2} & \w_2^{a+3}
	\end{array}
	\right) = (\w_1 \w_2)^a(\w_1+\w_2+\w_1\w_2)(\w_1-1)(\w_1-\w_2)(\w_2-1).
	\ee
	Applying complex conjugation to $\w_1,\w_2$ one obtains another solution to the PK scattering equations with  $\{ \tilde \w\}_1,{\tilde \w}_2\} =\{ 1/\w_1,1/\w_2\} $.  Conjugating \eqref{direct} gives then
	\be
	p_{a,a+1,a+3}^* =  (\w_1 \w_2)^{(-2a-3)}p_{a,a+2,a+3}.
	\ee
	This means that one can define the action of the involution as conjugation, one can check that the set of all solutions remains invariant.
	
	In order to generalize the cross ratio form of the scattering equations to $k=4$ and beyond it is convenient to rewrite \eqref{eqn:determinantal PK scattering} in yet another form.  To this end, let us define a family of projective invariants, as follows\footnote{See Section 4.2 of \cite{Early:2019eun}.  One can show that $w^{(L)}_{i,j}$ is a monomial in the multi-split cross ratios $w_J$ for $J \in \binom{\lbrack n\rbrack}{k}^{nf}$.}.  Denote
	$$w^{(L)}_{i,j} = \frac{p_{L,i,j'} p_{L,i',j}}{p_{L,i,j}p_{L,i',j'}},$$
	where the set $L \cup \{i,i',j,j'\}$, with $L\in \binom{\lbrack n\rbrack}{k-2}$, has $k+2$ distinct elements, and where $i'$ and $j'$ are the immediate successors of respectively $i$ and $j$ in the standard cyclic order on $\{1,\ldots, n\} \setminus L$.
	
	Working backward we find that the $k=3$  planar basis kinematics scattering equations have the following expression in terms of cross-ratios: 
	\begin{eqnarray}\label{eq: k3 scattering cross ratios}
		w^{(2)}_{14} = w^{(3)}_{25},\ \ w^{(3)}_{14} = w^{(4)}_{25},
	\end{eqnarray}
	together with the set of cyclic shifts by $j\mapsto j+1\text{ mod}(n)$.  This gives a (redundant) system of $2n$ equations.  Here for instance
	$$w^{(2)}_{14} = \frac{p_{125}p_{234}}{p_{124}p_{235}},\ \ w^{(3)}_{14} = \frac{p_{135}p_{234}}{p_{134}p_{325}}.$$
	
	For an expression in terms of only minors made from one or two cyclic intervals, substituting $w^{(1)}_{14}\mapsto 1-w^{(a)}_{14}$ in Equation \eqref{eqn:determinantal PK scattering} gives the following set of equations: 
	\begin{eqnarray}
		\frac{p_{123}p_{245}}{p_{124}p_{235}} = \frac{p_{234}p_{356}}{p_{235}p_{346}},\ \ \frac{p_{124}p_{345}}{p_{134}p_{245}} = \frac{p_{235}p_{456}}{p_{245}p_{356}}.
	\end{eqnarray}
	
	Similarly, for $k=4$ we find
	\begin{eqnarray}\label{eq:cross ratio PK scattering equations}
		\frac{p_{1234} p_{2356} p_{3457}}{p_{1235} p_{2345} p_{3467}}=1,\ \ \frac{p_{1235} p_{2456} p_{3467}}{p_{1245} p_{2346} p_{3567}}=1,\ \ \frac{p_{1245} p_{3456} p_{3567}}{p_{1345} p_{2356} p_{4567}}=1
	\end{eqnarray}
	together with their cyclic shifts modulo $n$.

	These can be straightforwardly reorganized in terms of cross ratios, as
	\begin{eqnarray}\label{eq: k4 scattering cross ratios}
		w^{(23)}_{15} = w^{(34)}_{26},\ \ w^{(24)}_{15} = w^{(35)}_{26}, \ \ w^{(34)}_{15} = w^{(45)}_{26},
	\end{eqnarray}
	again together with the set of cyclic shifts by $j\mapsto j+1\mod(n)$.  Here for instance
	$$w^{(23)}_{15} = \frac{p_{1236}p_{2345}}{p_{1235}p_{2346}},\ \ w^{(24)}_{15} = \frac{p_{1246}p_{2345}}{p_{1245}p_{2346}},\ \ w^{(34)}_{15} = \frac{p_{1346}p_{2345}}{p_{1345}p_{2346}},$$
	which could be rewritten in terms of minors with two cyclic intervals by replacing $w^{(ab)}_{15}$ with $1-w^{(ab)}_{15}$.
	
	Unfortunately we could not achieve a systematic derivation starting from the scattering equations which would lead to a proof of a general cross-ratio formula for all $k$, but based on Equations \eqref{eq: k3 scattering cross ratios} and \eqref{eq: k4 scattering cross ratios} but it is natural to infer the following cross-ratio formulation of the PK scattering equations for any $k$ and $n$ (of course with $2\le k\le n-2$), as follows:
	\begin{eqnarray}\label{eqn:scattering higher rank cross ratio}
		&\left\{w^{(2,3,\ldots \widehat{j},\ldots,  k)}_{1,k+1} = w^{(3,4,\ldots \widehat{j+1},\ldots,  k+1)}_{2,k+2}:j=2,\ldots, k \right\}&\\
		&\left\{w^{(3,4,\ldots \widehat{j+1},\ldots,  k+1)}_{2,k+2}  = w^{(4,5,\ldots \widehat{j+2},\ldots,  k+2)}_{3,k+3}:j=3,\ldots, k+1 \right\},&\nonumber\\
		& \vdots &\nonumber
	\end{eqnarray}
	for a total of $(k-1)n$ (dependent) equations.
	
	\section{Polytopes: Roots and Deformations of the PK Point}\label{sec: blades, polytopes}
	In the rest of the paper our efforts are directed towards answering the natural question: are there good deformations of the PK point?  Usually $m^{(k)}(\mathbb{I}_n\mathbb{I}_n)$ is evaluated at generic kinematic points; on the other hand the PK point is extremely singular, with only 2n non-vanishing coordinates: $s_{i,i+1,\ldots, i+k-2,i+k}=-1$ and $s_{i,i+1,\ldots, i+k-2,i+k-1}=1$ with all others equal to zero.
	
	We summarize partial results towards answering this question.  Using the planar basis of functions on $\mathcal{K}(k,n)$, we embed a dimension $(k-1)(n-k-1)$ lattice polytope $\Pi_{k,n}$ in the kinematic space which has the following key property: it has the PK point as its unique interior lattice point.   Then we introduce the rank-graded root polytopes $\mathcal{R}_{k,n}$, which are related to $\Pi_{k,n}$ by duality, and we present evidence for our conjecture that the volume of $\mathcal{R}_{k,n}$ is the Catalan number modulo a factor intrinsic to the lattice,
	$$\text{Vol}(\mathcal{R}_{k,n}) = \frac{C^{(k)}_{n-k}}{((k-1)(n-k-1))!}.$$
	
	\subsection{Blades, Planar Bases and Polymatroidal Blade Arrangements}\label{sec: Blades, Planar Bases and Polymatroidal Blade Arrangements}

	In this section, for the reader's convenience we review some key results from earlier work, in particular \cite{Early:2019eun,Early:2020hap}; these culminate in Proposition \ref{prop:planar basis} and provide key ingredients for Definition \ref{defn:Pikn}, and they lie at the core of the embedding  $\Psi:\mathbb{R}^{\binom{n}{k}} \hookrightarrow \mathcal{K}(k,n)$ in Remark \ref{prop: PK point}, of the polytope $\Pi_{k,n}$ into the kinematic space.  
	
	Fix integers $(k,n)$ such that $1\le k\le n-1$.
	
	Recall the notation $\binom{\lbrack n\rbrack}{k}$ for the set of $k$-element subsets of the set $\lbrack n\rbrack = \{1,\ldots, n\}$, and denote by 
	$$\binom{\lbrack n\rbrack}{k}^{nf} = \binom{\lbrack n\rbrack}{k} \setminus \left\{\{j,j+1,\ldots ,j+k-1\}: j=1,\ldots, n \right\}$$
	the \textit{nonfrozen} $k$-element subsets.  Let $\{e^J: J\in \binom{\lbrack n\rbrack}{k}\}$ be the standard basis for $\mathbb{R}^{\binom{n}{k}}$.
	
	The $k^\text{th}$ hypersimplex in $n$ variables is the $k^\text{th}$ integer cross-section of the unit cube $\lbrack 0,1\rbrack^n$,
	$$\Delta_{k,n} = \left\{x\in \lbrack0,1 \rbrack^n: \sum x_j=k \right\}.$$
	Henceforth we shall assume that $2\le k\le n-2$.
	
	Recall that the \textit{lineality} space is the n-dimensional subspace
	$$\left\{\sum_{J}x_J e^J: x\in\mathbb{R}^n \right\},$$
	of $\mathbb{R}^{\binom{n}{k}}$, where we use the notation $x_J = \sum_{j\in J} x_j$.
	
	Then the \textit{kinematic space} is the dimension $\binom{n}{k}-n$ subspace of $\mathbb{R}^{\binom{n}{k}}$, 
	\begin{eqnarray}\label{eq: kinematic space}
		\mathcal{K}(k,n) = \left\{(s) \in \mathbb{R}^{\binomial{n}{k}}: \sum_{J:\ J\ni j}s_J=0,\ j=1,\ldots, n \right\}.
	\end{eqnarray}
	The original definition of blades is due to A. Ocneanu; blades were first studied in \cite{Early:2018mac} with connections to structures known in Hopf algebras as quasi-shuffles.  
	\begin{defn}[\cite{OcneanuLectures}]\label{defn:blade}
		A decorated ordered set partition $((S_1)_{s_1},\ldots, (S_\ell)_{s_\ell})$ of $(\{1,\ldots, n\},k)$ is an ordered set partition $(S_1,\ldots, S_\ell)$ of $\{1,\ldots, n\}$ together with an ordered list of integers $(s_1,\ldots, s_\ell)$ with $\sum_{j=1}^\ell s_j=k$.  It is said to be of type $\Delta_{k,n}$ if we have additionally $1\le s_j\le\vert S_j\vert-1 $, for each $j=1,\ldots, \ell$.  In this case we write $((S_1)_{s_1},\ldots, (S_\ell)_{s_\ell}) \in \text{OSP}(\Delta_{k,n})$, and we denote by $\lbrack (S_1)_{s_1},\ldots, (S_\ell)_{s_\ell}\rbrack$ the convex polyhedral cone in $\mathcal{H}_{k,n}$, that is cut out by the facet inequalities
		\begin{eqnarray}\label{eq:hypersimplexPlate}
			x_{S_1} & \ge & s_1 \nonumber\\
			x_{S_1\cup S_2} & \ge & s_1+s_2\nonumber\\
			& \vdots & \\
			x_{S_1\cup\cdots \cup S_{\ell-1}} & \ge & s_1+\cdots +s_{\ell-1}.\nonumber
		\end{eqnarray}
		These cones were called \textit{plates} by Ocneanu.
		Finally, the \textit{blade} $(((S_1)_{s_1},\ldots, (S_\ell)_{s_\ell}))$ is the union of the codimension one faces of the complete simplicial fan formed by the $\ell$ cyclic block rotations of $\lbrack (S_1)_{s_1},\ldots(S_\ell)_{s_\ell},\rbrack$, that is
		\begin{eqnarray}\label{eq: defn blade}
			(((S_1)_{s_1},\ldots, (S_\ell)_{s_\ell})) = \bigcup_{j=1}^\ell \partial\left(\lbrack (S_j)_{s_j},(S_{j+1})_{s_{j+1}},\ldots, (S_{j-1})_{s_{j-1}}\rbrack\right).
		\end{eqnarray}
	\end{defn}
	
	We emphasize that in this paper we consider only translations of the single nondegenerate blade with labeled by the cyclic order $(1,2,\ldots, n)$, usually denoted $\beta := ((1,2,\ldots, n))$; however in \cite{Early:2019zyi} it was shown that by pinning $\beta$ to a vertex $e_J$ of a hypersimplex $\Delta_{k,n}$, then that translated blade $\beta_{J}$ intersects the hypersimplex in a blade $(((S_1)_{s_1},\ldots, (S_\ell)_{s_\ell}))$ where now the pairs $(S_j,s_j)$ are uniquely determined and satisfy the condition from Definition \ref{defn:blade},
	$$1\le s_j\le \vert S_j\vert-1,$$
	or in short $((S_1)_{s_1},\ldots, (S_\ell)_{s_\ell}) \in \text{OSP}(\Delta_{k,n})$.  Additionally, we have that each $S_j = \{a,a+1,\ldots, b\}$ is cyclically contiguous.  We refer the reader to \cite{Early:2019zyi} for a detailed explanation of the construction of the decorated ordered set partition.
	
	The number of blocks $\ell$ is equal to the number of cyclic intervals in the set $J$ and the contents of the blocks are determined by the set $J$ together with the cyclic order.  In particular, the number of blocks is equal to the number of maximal cells in the subdivision induced by the blade.
	
	It was further shown in \cite{Early:2019zyi} that $(((S_1)_{s_1},\ldots, (S_\ell)_{s_\ell}))$ induces a multi-split positroidal subdivision where the vertices of the maximal cells become bases of Schubert matroids, or nested matroids.
	
	\begin{rem}
		We emphasize that a blade is not a tropical hyperplane, though the two are isomorphic as polyhedral complexes.  The blade $((\alpha_1,\ldots, \alpha_n))$ has the following key feature: the directions of its $n$ edges are exactly the cyclic system of roots $e_{\alpha_1}-e_{\alpha_2},\ldots, e_{\alpha_n}-e_{\alpha_1}$; this in fact has the nontrivial consequence that blades are tightly connected to the theory of matroids.  
		
		Superimposing multiple copies of the same blade $((1,2,\ldots, n))$ on the vertices of a hypersimplex $\Delta_{k,n}$ results in a particularly ``well-behaved'' subdivision when the vertices satisfy a condition on their pairwise relative displacements: in \cite{Early:2019zyi}, a combinatorial criterion called weak separation, for $k$-element subsets of $\{1,\ldots, n\}$, was shown to provide the compatibility criterion for an arrangement of blades on the vertices of $\Delta_{k,n}$ to induce a subdivision of it such that every maximal cell is a matroid (in particular positroid) polytope.  It is natural to ask what happens when the hypersimplex is replaced with more general classes of generalized permutohedra (or, polymatroids).  We return to this question at the end of the section.
		
	\end{rem}

	Let us now recall from \cite{Early:2019eun} the construction of the planar basis: this is a set of $\binomial{n}{k}-n$ linear functions, denoted $\eta_J$, on $\mathcal{K}(k,n)$ which are used to construct generalized Feynman diagrams in the sense of \cite{Borges:2019csl,Cachazo:2019xjx,Early:2019eun}.
	
	We first introduce $n$ linear functionals 
	$$L_j(x) = x_{j+1}+2x_{j+2}+\cdots +(n-1)x_{j-1},$$
	on $\mathbb{R}^n$, where the indices are cyclic modulo $n$.  For any lattice point $v\in\mathbb{R}^n$, define a piecewise-linear surface
	$$\rho_v(x) = -\frac{1}{n}\min\{L_1(x-v),\ldots, L_n(x-v)\}.$$
	We warn the reader that our convention differs from \cite{Early:2020hap} in that now the factor $-\frac{1}{n}$ is incorporated into $\rho_v$.
	
	We remark that unless otherwise stated we shall assume that $v$ has integer coordinates, so that $v\in\mathbb{Z}^n$.
	
	Here the graph of $\rho_v$ is a piecewise linear surface, with $n$ linear domains and with maximum height zero at $v$.  
	
	\begin{rem}
		The locus of nonzero curvature of the function $\rho_J$ is the blade $((1,2,\ldots, n))$, see Definition \ref{defn:blade}. 
	\end{rem}
	
	\begin{figure}[!h]\label{fig:octahedron}
		\centering
		\includegraphics[width=0.85\linewidth]{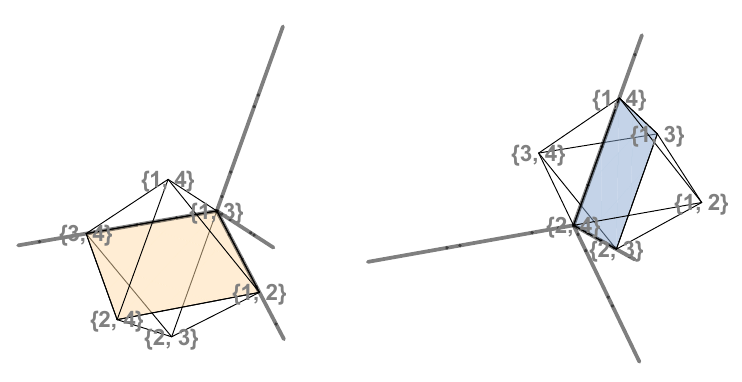}
		\caption{The two blade arrangements on the octahedron.  For clarity only portions of the blade are shown.  In physics these induce the $s$ and $t$ channels via the planar basis of linear functionals on the kinematic space.  Left: $\eta_{13} = s_{23}$.  Right: $\eta_{24} = s_{12}$.  For general $(k,n)$ see \cite{Early:2019zyi,Early:2019eun,Early:2020hap}.}
	\end{figure}

	The set of functions $\rho_u$ for $u\in \left\{x\in\mathbb{Z}^n: \sum_{j=1}^n x_j=0 \right\}$, say, satisfy relations which generalize the positive tropical Plucker relations from the hypersimplex to the ambient integer lattice.
	
	Indeed, a slight extension of Proposition\footnote{In the modification, we simply enlarge the vertex set of beyond that of $\Delta_{k,n}$, to other vertex sets an integer lattice of the form $\{x\in\mathbb{Z}^n: \sum_{J=1}^n x_j=r\}$ for a given integer $r$.} of \cite[Prop. 3.8]{Early:2020hap} leads to Proposition \ref{prop: octahedron recurrence}.
	\begin{prop}\label{prop: octahedron recurrence}
		Let $r\in\mathbb{Z}$ be an integer.
		
		For any $v\in \left\{x\in\mathbb{Z}^n: \sum_{j=1}^n x_j=r-2 \right\}$ and $x\in \left\{x\in\mathbb{Z}^n: \sum_{j=1}^n x_j=r \right\}$, then 
		\begin{eqnarray}\label{eq: octahedron recurrence}
			\rho_{v+e_{ac}}(x) + \rho_{v+e_{bd}}(x) = \min\{\rho_{v+e_{ab}}(x)+\rho_{v+e_{cd}}(x),\rho_{v+e_{ad}}(x)+\rho_{v+e_{bc}}(x) \}.
		\end{eqnarray}
		for any cyclic order $a<b<c<d$.
	\end{prop}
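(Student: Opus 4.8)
The plan is to deduce Proposition~\ref{prop: octahedron recurrence} from the three-term relations already established for blade-pinned surfaces over a hypersimplex, namely \cite[Prop.~3.8]{Early:2020hap}; the only genuinely new ingredient is the mild enlargement, flagged in the footnote, of the set of admissible pinning points from the vertices of $\Delta_{k,n}$ to a whole coordinate-sum slice $\{x\in\mathbb{Z}^n:\sum_j x_j=r\}$ of the integer lattice.

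First I would remove the base point $v$. Since each $L_m$ is linear, $L_m\big(x-(v+e_i+e_j)\big)=L_m\big((x-v)-(e_i+e_j)\big)$, so setting $y:=x-v$, which lies in $\{z\in\mathbb{Z}^n:\sum_\ell z_\ell=2\}$, we get $\rho_{v+e_i+e_j}(x)=\rho_{e_i+e_j}(y)$ for all six pairs $\{i,j\}\subseteq\{a,b,c,d\}$, and hence \eqref{eq: octahedron recurrence} is equivalent to
\begin{eqnarray*}
\rho_{e_a+e_c}(y)+\rho_{e_b+e_d}(y) & = & \min\{\,\rho_{e_a+e_b}(y)+\rho_{e_c+e_d}(y),\ \rho_{e_a+e_d}(y)+\rho_{e_b+e_c}(y)\,\}.
\end{eqnarray*}
Here the six pinning points $e_i+e_j$ are precisely the vertices of the octahedron $\mathrm{conv}\{e_i+e_j:\{i,j\}\subseteq\{a,b,c,d\}\}$, a copy of $\Delta_{2,4}$ embedded along the coordinates $a,b,c,d$, and the displayed identity is exactly the assertion that $J\mapsto\rho_{e_J}(y)$ is a positive tropical Plücker vector on $\binom{\{a,b,c,d\}}{2}$, with the crossing split $\{a,c\}\mid\{b,d\}$ appearing on the left.

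Next I would invoke \cite[Prop.~3.8]{Early:2020hap} in this enlarged form. Its proof expands $\rho_{e_J}(y)=\tfrac1n\min_m\big(L_m(y)-\delta_m(j_1)-\delta_m(j_2)\big)\,e^I$ with $\delta_m(i)=(i-m)\bmod n$, and then, for each fixed minimizing index $m$, compares the six pairwise sums $\delta_m(i)+\delta_m(j)$: because all three pairings share the common value $\sum_{i\in\{a,b,c,d\}}\delta_m(i)$, and because the crossing pairing $\{a,c\}\mid\{b,d\}$ is the one singled out by the cyclic order $a<b<c<d$, one reads off that the minimum over $m$ of the left-hand coefficient equals the minimum of the two right-hand coefficients. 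The sole modification relative to \cite[Prop.~3.8]{Early:2020hap} is that $y$ may now be any point of $\{z\in\mathbb{Z}^n:\sum_\ell z_\ell=2\}$ (equivalently, before the translation, that $v$ and $x$ range over the slice $\{\sum=r\}$) rather than a point of $\Delta_{2,n}$; but that argument uses only the explicit piecewise-linear form of the $L_m$ and never the $0/1$ nature of the vertices, so it applies verbatim, the common factor $e^I$ playing no role.

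The part requiring care — the reason this is a ``slight extension'' rather than an outright triviality — is the bookkeeping in that last comparison: the relative order of the six sums $\delta_m(i)+\delta_m(j)$ depends on which of the four cyclic arcs $(a,b),(b,c),(c,d),(d,a)$ contains the minimizing index $m$. For $m$ in the arc from $d$ to $a$ one has $\delta_m(a)<\delta_m(b)<\delta_m(c)<\delta_m(d)$, so among the six sums the two involving $\{a,b\}$ resp.\ $\{c,d\}$ are the extreme ones, the two from the crossing split $\{a,c\}\mid\{b,d\}$ are the second-smallest and second-largest, and the split $\{a,d\}\mid\{b,c\}$ gives the two middle values; the other three arcs follow by the cyclic relabeling $(a,b,c,d)\mapsto(b,c,d,a)$, under which the crossing split is fixed and the other two are interchanged. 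Assembling these per-$m$ orderings into a single identity for $\min_m$ is where the cyclicity of the configuration is genuinely used, and it is precisely the computation already carried out in \cite[Prop.~3.8]{Early:2020hap}.
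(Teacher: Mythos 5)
Your reduction—using linearity of the $L_m$'s to absorb the base point $v$ and rewrite everything in terms of $y=x-v$—is correct and is a clean way to phrase what the paper does when it observes that around any lattice point the translated hypersimplices all look alike, so that \cite[Prop.~3.8]{Early:2020hap} can be invoked once the pinning points are allowed to live on an arbitrary slice $\{\sum_j x_j = r\}$.  At that level your proof and the paper's are the same argument: translate, then cite.

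Where you and the paper part ways is in the sketch of the core computation.  The paper's analytic sketch does not compare the six sums $\delta_m(i)+\delta_m(j)$ for a fixed $m$; instead it considers the two \emph{global} differences
$$D_1 := \bigl(\rho_{v+e_{ab}}+\rho_{v+e_{cd}}\bigr) - \bigl(\rho_{v+e_{ac}}+\rho_{v+e_{bd}}\bigr),\qquad D_2 := \bigl(\rho_{v+e_{ad}}+\rho_{v+e_{bc}}\bigr) - \bigl(\rho_{v+e_{ac}}+\rho_{v+e_{bd}}\bigr),$$
and argues that these two piecewise-linear functions on the lattice slice have disjoint support, each with nonnegative integer (in fact $+1$) values where nonzero; the tropical identity is then immediate because $\min(D_1,D_2)\equiv 0$.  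Your per-$m$ ordering analysis, by contrast, is not by itself a proof: the observation that all three pairings of $\delta_m$'s have the same total for a \emph{single fixed} $m$ does not carry over to the six $\rho$'s, each of which is a $\min$ over $m$ with its own, generally different, minimizer.  So the phrase ``one reads off that the minimum over $m$ of the left-hand coefficient equals the minimum of the two right-hand coefficients'' elides exactly the step that needs doing: assembling six independent $\min_m$'s into a single relation.  You do flag this (``Assembling\ldots is precisely the computation already carried out in \cite[Prop.~3.8]{Early:2020hap}''), so the argument is not wrong—you ultimately defer to the cited result, as the paper does—but the heuristic you offer as an explanation of that result does not actually lead to it, whereas the paper's disjoint-support framing is both more direct and what the cited proof actually establishes.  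One further small misattribution: the ``slight extension'' the paper's footnote refers to is solely the enlargement of the admissible pinning set from the vertices of $\Delta_{k,n}$ to a full lattice slice, not the bookkeeping of arc-dependent orderings.
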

	
	\begin{proof}[Sketch of Proof]
		For the proof, the key insight is that around each integer lattice point, the hypersimplices $\Delta_{1,n},\Delta_{2,n},\ldots, \Delta_{n-1,n}$ meet, each with a multiplity $\binom{n}{k}$.  Therefore the proof reduces to the geometric one given in \cite{Early:2020hap}, for any chosen appropriated translated hypersimplex $\Delta_{2,n},\ldots, \Delta_{n-2,n}$.  For an analytic derivation, which we omit, one would show that   
		$$ - (\rho_{v+e_{ac}}(x)+\rho_{v+e_{bd}}(x)) + (\rho_{v+e_{ab}}(x)+\rho_{v+e_{cd}}(x)) $$
		and
		$$ - (\rho_{v+e_{ac}}(x)+\rho_{v+e_{bd}}(x)) + (\rho_{v+e_{ad}}(x)+\rho_{v+e_{bc}}(x)) $$
		have disjoint support on the given lattice $x\in \left\{x\in\mathbb{Z}^n: \sum_{j=1}^n x_j=r \right\}$, finding that when one of the two equations is nonzero at some $e^{v}$ for a lattice point $v$, then the coefficient is +1.
		
	\end{proof}

	For the present purposes we shall specialize the discussion to vertices $v = e_J\in \Delta_{k,n}$.  For each $k$-element subset $J \subset \{1,\ldots, n\}$, define a piecewise linear surface over the hypersimplex, the graph of the function $\rho_J:\Delta_{k,n} \rightarrow \mathbb{R}$, by
	$$\rho_J(x) = \min\left\{L_1(x-e_J),\ldots, L_n(x-e_J) \right\}.$$
	
	Often -- as is the case here -- it is important to localize the function $\rho_J$ still further to the vertices of a lattice polytope; for example, to the vertex set of $\Delta_{k,n}$.  We obtain a vector of heights, that is, an element of $\mathbb{R}^{\binom{n}{k}}$ with simple rational coefficients.
	
	Denote the localization of $\rho_J$ to the vertices of the hypersimplex by 
	\begin{eqnarray}
		\mathfrak{h}_J & = & \sum_{I\in\binom{\lbrack n\rbrack}{k}} \rho_J(e_I) e^I\\
		& = & \frac{1}{n}\sum_{I\in \binom{\lbrack n\rbrack}{k}}\min\{L_1(e_I-e_J),\ldots, L_n(e_I-e_J)\}e^I,
	\end{eqnarray}
	where $\{e^I: I \in \binom{\lbrack n\rbrack}{k} \}$ is the standard basis for $\mathbb{R}^{\binom{n}{k}}$. Then using the height function $\mathfrak{h}_J$, one can interpolate to recover the piecewise linear surface (i.e., the graph of $\rho_J$) over $\Delta_{k,n}$ whose bends project down to the blade $\beta_J$, which intersects the hypersimplex in a blade of the form 
	$$(((S_1)_{s_1},\ldots, (S_\ell)_{s_\ell})).$$
	
	Then each element $\mathfrak{h}_J$ determines a lift of each vertex $e_I \in \Delta_{k,n}$ to a height $\frac{1}{n}\min\{L_1(e_I-e_J),\ldots, L_n(e_I-e_J)\}$ and interpolating between these lifts gives rise to the piecewise-linear surface over the hypersimplex $\Delta_{k,n}$ defined by $\rho_J$, pinned to the vertex $e_J\in \Delta_{k,n}$.  
	
	Moreover, the bends of each such surface project down into $\Delta_{k,n}$ to the internal facets of a certain kind of matroid subdivision, called a positroidal multi-split (\cite{Early:2019zyi}).  This union of internal facets was further shown to coincide with the intersection of the translated blade $((1,2,\ldots, n))_{e_J}$ with the hypersimplex.

	\begin{example}
		Let us now give the explicit calculation of the identity of Proposition \ref{prop: octahedron recurrence}; of course, the basic example is the octahedron $\Delta_{2,4}$ itself.  The six height functions are as follows:
		\begin{eqnarray}
			\mathfrak{h}_{13} & = & -\frac{1}{4} \left(e^{12}+3 e^{14}+3 e^{23}+2 e^{24}+e^{34}\right)\nonumber\\
			\mathfrak{h}_{24} & = & -\frac{1}{4} \left(3 e^{12}+2 e^{13}+e^{14}+e^{23}+3 e^{34}\right)\nonumber\\
			\mathfrak{h}_{12} & = & -\frac{1}{4} \left(3 e^{13}+2 e^{14}+2 e^{23}+e^{24}+4 e^{34}\right)\\
			\mathfrak{h}_{34} & = & -\frac{1}{4} \left(4 e^{12}+3 e^{13}+2 e^{14}+2 e^{23}+e^{24}\right)\nonumber\\
			\mathfrak{h}_{14} & = & -\frac{1}{4} \left(2 e^{12}+e^{13}+4 e^{23}+3 e^{24}+2 e^{34}\right)\nonumber\\
			\mathfrak{h}_{23} & = & -\frac{1}{4} \left(2 e^{12}+e^{13}+4 e^{14}+3 e^{24}+2 e^{34}\right).\nonumber
		\end{eqnarray}
		Now we find that:
		\begin{eqnarray}
			(\mathfrak{h}_{12} + \mathfrak{h}_{34}) - (\mathfrak{h}_{13} + \mathfrak{h}_{24}) & = & e^{13}\\
			(\mathfrak{h}_{14} + \mathfrak{h}_{23}) - (\mathfrak{h}_{13} + \mathfrak{h}_{24}) & = & e^{24}.
		\end{eqnarray}
		Comparing respective coefficients we see that the height functions $e^{13}$ and $e^{24}$ above now obviously have disjoint support; it follows that indeed,
		$$\rho_{13}(e^{ij}) + \rho_{24}(e^{ij}) = \min \left\{\rho_{12}(e^{ij}) + \rho_{34}(e^{ij}), \rho_{14}(e^{ij}) + \rho_{23}(e^{ij})\right\}$$
		for each $e^{ij}$, where $e_{ij}$ is a vertex of the octahedron $\Delta_{2,4}$.
	\end{example}
	
	Now, as shown in \cite{Early:2020hap}, by specializing $\rho_v$ to the vertex set $\{e_J: J\in\binom{\lbrack n\rbrack}{k}\}$ of a hypersimplex, we find that the vectors $\mathfrak{h}_J\in \binom{\lbrack n\rbrack}{k}$ satisfy the positive tropical Plucker relations and thus define elements in the positive tropical Grassmannian Trop$^+G(k,n)$; in fact each $\mathfrak{h}_{J}$ generates a ray in Trop$^+G(k,n)$, and as a height function it induces the piecewise linear surface $\rho_J$ which projects down to the hypersimplex to induce a positroidal multisplit, such that $\rho_J$ is linear over each maximal cell.
	
	Consequently we obtain a family of piecewise-linear functions, pinned to the integer lattice points in an affine hyperplane of the form $\sum_{j=1}^n x_j=r\in\mathbb{Z}$ in $\mathbb{R}^n$, which can be localized to the integer lattice points in any generalized permutohedron; one particularly interesting case is when the facet hyperplanes are of the form $\sum_{j=a}^b x_j = c$ for any cyclic interval $a,a+1,\ldots, b$ and where $c$ is an integer.

	%%%%%%%%%%%%%%%%%%%%%%%%%%%%%%%%%%%%%%%%%%%%%%%%%%%%%%%%%%%

	Let us recall the first basis result result from \cite{Early:2020hap}.

	\begin{prop}[\cite{Early:2020hap}]\label{prop:planar basis height}
		The set of height functions $\mathfrak{h}_J$ is a basis for $\mathbb{R}^{\binom{n}{k}}$.
	\end{prop}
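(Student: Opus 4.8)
The plan is as follows. Since there are exactly $\binom{n}{k}$ height functions $\mathfrak{h}_J$, indexed by $J\in\binom{\lbrack n\rbrack}{k}$, and they all live in $\mathbb{R}^{\binom{n}{k}}$, it suffices to prove linear independence, i.e.\ that the matrix $M=[\rho_J(e_I)]_{I,J}$ is invertible. First I would record the two structural features that drive the argument. Writing $L_j(e_S)=\sum_{s\in S}\big((s-j)\bmod n\big)$ and $f_S(j)=L_j(e_S)$, one has $n\,\rho_J(e_I)=\min_{j\in\lbrack n\rbrack}\big(f_I(j)-f_J(j)\big)$. Because $\sum_j f_S(j)=k\binom{n}{2}$ does not depend on $S$, the cyclic function $j\mapsto f_I(j)-f_J(j)$ has vanishing total sum, so $\rho_J(e_I)\le 0$, with equality if and only if $f_I=f_J$, i.e.\ $I=J$ (the increments of $f_S$ recover $S$). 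Thus $M$ has zero diagonal and strictly negative off-diagonal entries; in particular $M$ is \emph{not} literally triangular, and this is precisely the point the argument must get around.

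Second, I would split the ambient space. Let $\Lambda=\{\sum_J x_J e^J:x\in\mathbb{R}^n\}$ be the lineality space and $\mathcal{K}(k,n)$ the kinematic space; a short computation with binomial coefficients gives $\Lambda\cap\mathcal{K}(k,n)=0$, hence $\mathbb{R}^{\binom{n}{k}}=\Lambda\oplus\mathcal{K}(k,n)$. For a frozen subset $J=\{i,i+1,\ldots,i+k-1\}$ the blade $\beta_J$ pinned at $e_J$ induces the trivial subdivision of $\Delta_{k,n}$ (one maximal cell, since $J$ has a single cyclic interval), so $\min_j L_j(x-e_J)$ is realized by one fixed index throughout $\Delta_{k,n}$, $\rho_J$ is the restriction to the vertices of a single affine functional, and therefore $\mathfrak{h}_J\in\Lambda$. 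It then suffices to prove (a) the $n$ frozen $\mathfrak{h}_J$ form a basis of $\Lambda$, and (b) the images of the $\binom{n}{k}-n$ nonfrozen $\mathfrak{h}_J$ in $\mathbb{R}^{\binom{n}{k}}/\Lambda\cong\mathcal{K}(k,n)$ are linearly independent; the usual splitting argument then assembles (a) and (b) into invertibility of $M$.

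For (a) I would exploit cyclic symmetry: $\mathfrak{h}_{J_{i+1}}$ is the $\rho_n$-shift of $\mathfrak{h}_{J_i}$, so the claim is a circulant nonsingularity which, after a discrete Fourier transform over $\mathbb{Z}/n$, reduces to checking that an explicit polynomial (built from the affine functional realizing $\rho_{J_1}$, whose minimizing index is forced for a cyclic interval) does not vanish at any $n$-th root of unity. Part (b) is the heart of the matter, and the plan there is a triangularity argument: order the nonfrozen $k$-subsets first by their number $\ell(J)\ge 2$ of cyclic intervals and then by a tie-break (for instance colex on the list of right-endpoints of the intervals), and for each $J$ produce a detector $t_J\in\mathcal{K}(k,n)$ with $\langle\mathfrak{h}_J,t_J\rangle\neq 0$ but $\langle\mathfrak{h}_{J'},t_J\rangle=0$ for all $J'\prec J$. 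The natural way to find $t_J$ is to use that $\mathfrak{h}_J$ modulo $\Lambda$ is the planar-basis functional $\eta_J$, which is linear on every maximal cell of the positroidal multi-split it induces and whose bend locus is exactly $\beta_J$; choosing $t_J$ in the relative interior of the appropriate dual cone of $\mathrm{Trop}^+G(k,n)$ should make it blind to all $\preceq$-earlier blades. If this bookkeeping becomes unwieldy, an alternative is an induction on $n$ (or on $k$): the octahedron recurrence of Proposition \ref{prop: octahedron recurrence} rewrites each $\mathfrak{h}_J$ with $\ell(J)\ge 2$ in terms of $\mathfrak{h}$'s pinned at ``smaller'' lattice points, and combined with a deletion/contraction relating $\Delta_{k,n}$ to $\Delta_{k-1,n-1}$ and $\Delta_{k,n-1}$ this should reduce invertibility of $M$ to the same statement in lower rank or dimension, with base cases $k=2$ (tree metrics) and $n-k=2$ (by duality).

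The main obstacle, then, is step (b): converting the slogan ``$\mathfrak{h}_J$ remembers $J$ through its bend locus $\beta_J$'' into an honest triangular (or inductive) argument. Because $M$ is dense with every off-diagonal entry nonzero and every diagonal entry zero, there is no pivot to read off directly, so one is forced to control the off-diagonal entries through the combinatorics of the induced multi-split subdivisions, and getting the order and the detectors $t_J$ to be mutually compatible is where the real work lies. Note that the downstream statement that the nonfrozen $\eta_J$ form the planar basis of $\mathcal{K}(k,n)$, Proposition \ref{prop:planar basis}, is essentially equivalent to (b), so it cannot be invoked here.
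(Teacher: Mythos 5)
The paper does not in fact give its own proof of this proposition; it is stated as imported from \cite{Early:2020hap}, so there is nothing to compare your argument against line by line. Evaluating your proposal on its own terms, though, there is a genuine gap — and you flag it yourself.

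The preparatory reductions look sound. Reducing the claim to invertibility of $M=[\rho_J(e_I)]_{I,J}$, the observation that $M$ has zero diagonal and strictly negative off-diagonal entries (your calculation that the increments $L_{j+1}(e_S)-L_j(e_S)=n\,\mathbb{1}[j\in S]-k$ recover $S$, so $\rho_J(e_I)=0$ forces $I=J$, is correct), the direct sum $\mathbb{R}^{\binom{n}{k}}=\Lambda\oplus\mathcal{K}(k,n)$, and the identification of the $n$ frozen $\mathfrak{h}_J$ with elements of the lineality space (since $\rho_J$ is affine over $\Delta_{k,n}$ when $J$ is a single cyclic interval) are all correct, as is the logic that (a) plus (b) yields the result and that Proposition \ref{prop:planar basis} cannot be cited circularly.

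But neither (a) nor (b) is actually established, and (b) is where the whole weight of the argument sits. For (a), you reduce to nonvanishing of a discrete-Fourier symbol at $n$-th roots of unity but do not compute the symbol or show nonvanishing. For (b), you propose a total order on nonfrozen $k$-subsets (by number of cyclic intervals with a tie-break) and detectors $t_J\in\mathcal{K}(k,n)$ chosen ``in the appropriate dual cone of $\mathrm{Trop}^+G(k,n)$,'' but you neither construct the $t_J$, nor prove one can be chosen with $\langle\mathfrak{h}_J,t_J\rangle\ne 0$ while $\langle\mathfrak{h}_{J'},t_J\rangle=0$ for all $J'\prec J$, nor verify that the proposed order actually makes the pairing matrix triangular. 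The alternative route via the octahedron recurrence of Proposition \ref{prop: octahedron recurrence} together with a deletion/contraction is likewise a suggestion, not an argument — in particular no statement is formulated that could serve as the inductive hypothesis. Because $M$ is dense with every off-diagonal entry nonzero, the existence of a compatible order-and-detector system (or a working induction) is precisely the content a proof must supply, and the slogan that ``$\mathfrak{h}_J$ remembers $J$ through its bend locus'' — while true — does not by itself give linear independence. As it stands, the proposal sets up the problem cleanly and identifies the obstacle accurately, but does not close it.
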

	
	Denote by ``$\cdot$'' the standard Euclidean dot product on $\mathbb{R}^{\binomial{n}{k}}$.
	
	Now for any $J\in \binom{\lbrack n\rbrack}{k}^{nf}$, define a linear functional on the kinematic space, or in more physical terminology, a planar kinematic invariant, $\eta_J:\mathcal{K}(k,n) \rightarrow \mathbb{R}$, by
	\begin{eqnarray}\label{eq:planar basis element}
		\eta_J(s) & := & -\mathfrak{h}_J \cdot (s) =-\frac{1}{n}\sum_{I\in \binom{\lbrack n\rbrack}{k}}\min\{L_1(e_I-e_J),\ldots, L_n(e_I-e_J)\}s_I.
	\end{eqnarray}
	Usually instead of $\eta_J(s)$ we write just $\eta_J$ with the understanding that $\eta_J$ is to be evaluated on points in $\mathcal{K}(k,n)$.
	
	Then we have the property that if $J = \{i,i+1,\ldots, i+k-1\}$ is frozen, then since the graph of $\rho_J:\Delta_{k,n}\rightarrow \mathbb{R}$ does not bend over $\Delta_{k,n}$, it follows that $\eta_J$ is identically zero on $\mathcal{K}(k,n)$.  See \cite{Early:2020hap} for details.

	A further computation proves linear independence for the set of $\eta_J$ where $J$ is nonfrozen, and we obtain Proposition \ref{prop:planar basis}.  This will be the key to defining the map $\Psi$ in Equation \eqref{eq:Psi}.
	
	\begin{prop}[\cite{Early:2020hap}]\label{prop:planar basis}
		The set of linear functionals
		$$\left\{\eta_J:\mathcal{K}(k,n) \rightarrow \mathbb{R}: J\in \binomial{\lbrack n\rbrack}{k}^{nf} \right\}$$
		is a basis of the space of the dual kinematic space $\left(\mathcal{K}(k,n)\right)^\ast$, that is to say, it is a basis of the space of linear functionals on $\mathcal{K}(k,n)$.
	\end{prop}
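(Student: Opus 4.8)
The plan is to deduce the proposition from Proposition~\ref{prop:planar basis height} by passing to a quotient, so that essentially no new computation is needed. The first step is to identify the dual kinematic space concretely: by definition $\mathcal{K}(k,n)$ is the orthogonal complement, with respect to the Euclidean dot product ``$\cdot$'', of the lineality space $\Lambda=\{\sum_J x_J e^J:x\in\mathbb{R}^n\}$, which is spanned by the $n$ vectors $\lambda^{(j)}:=\sum_{J\ni j}e^J$; indeed $s\in\mathcal{K}(k,n)$ says precisely that $s\cdot\lambda^{(j)}=0$ for $j=1,\ldots,n$. Since $(\Lambda^\perp)^\perp=\Lambda$ in a finite-dimensional inner product space, a covector $v\cdot(-)$ on $\mathbb{R}^{\binom{n}{k}}$ restricts to zero on $\mathcal{K}(k,n)$ exactly when $v\in\Lambda$, so restriction of linear functionals induces a canonical isomorphism $\mathbb{R}^{\binom{n}{k}}/\Lambda\cong(\mathcal{K}(k,n))^\ast$, $v+\Lambda\mapsto(v\cdot(-))|_{\mathcal{K}(k,n)}$, under which $\eta_J$ is the image of the coset $-\mathfrak{h}_J+\Lambda$. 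Thus the proposition is equivalent to the assertion that the cosets $\mathfrak{h}_J+\Lambda$ with $J$ nonfrozen form a basis of $\mathbb{R}^{\binom{n}{k}}/\Lambda$ (negating representatives is harmless).

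Next I would record the relevant dimensions, both already stated above: $\dim\Lambda=n$ and $\dim\mathcal{K}(k,n)=\binom{n}{k}-n$, hence $\dim(\mathcal{K}(k,n))^\ast=\binom{n}{k}-n$. On the other side, for a frozen subset $J=\{i,i+1,\ldots,i+k-1\}$ the surface $\rho_J$ does not bend over $\Delta_{k,n}$, so $\eta_J\equiv 0$ on $\mathcal{K}(k,n)$ --- as recorded just before the statement --- which says precisely that $\mathfrak{h}_J\in(\mathcal{K}(k,n))^\perp=\Lambda$. Since there are exactly $n$ frozen $k$-subsets and $\binom{n}{k}-n$ nonfrozen ones, the proposed basis already has the right cardinality, and it remains only to check that the nonfrozen $\mathfrak{h}_J$ are linearly independent modulo $\Lambda$.

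The proof is then finished with elementary linear algebra. By Proposition~\ref{prop:planar basis height} the family $\{\mathfrak{h}_J:J\in\binom{\lbrack n\rbrack}{k}\}$ is a basis of $\mathbb{R}^{\binom{n}{k}}$; in particular its $n$ frozen members are linearly independent, and since they all lie in the $n$-dimensional subspace $\Lambda$ they form a basis of $\Lambda$. Therefore $\mathbb{R}^{\binom{n}{k}}=\Lambda\oplus W$ with $W$ the span of the nonfrozen $\mathfrak{h}_J$, so the nonfrozen $\mathfrak{h}_J$ descend to a basis of $\mathbb{R}^{\binom{n}{k}}/\Lambda$. Transporting this basis through the isomorphism of the first step shows that $\{\eta_J:J\in\binom{\lbrack n\rbrack}{k}^{nf}\}$ is a basis of $(\mathcal{K}(k,n))^\ast$, which is the claim.

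Because the argument offloads the real content onto Proposition~\ref{prop:planar basis height}, the main obstacle --- were one to want a self-contained treatment --- is that proposition itself: showing the height functions $\mathfrak{h}_J$ form a basis of $\mathbb{R}^{\binom{n}{k}}$. One natural approach is to choose an order on $\binom{\lbrack n\rbrack}{k}$ adapted to the functionals $L_1,\ldots,L_n$ for which the transition matrix between $\{\mathfrak{h}_J\}$ and the standard basis $\{e^J\}$ is triangular with nonzero diagonal; this can be read off from the explicit piecewise-linear formula for $\rho_J$ pinned at the vertex $e_J$. Everything downstream --- the orthogonality $\mathcal{K}(k,n)=\Lambda^\perp$, the vanishing of the frozen $\eta_J$, and the final quotient bookkeeping --- is routine and essentially already set up in the preceding discussion, the only care being the standing hypothesis $2\le k\le n-2$ assumed throughout.
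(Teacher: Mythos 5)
Your proof is correct. One should note that the paper itself does not supply a proof of this proposition: it is attributed to \cite{Early:2020hap}, and the surrounding text only sketches the ingredients (the vanishing of $\eta_J$ for frozen $J$, and the remark that ``a further computation proves linear independence for the set of $\eta_J$ where $J$ is nonfrozen''). Your argument is a tidy way to organize that ``further computation'': rather than attacking linear independence of the $\eta_J$ on $\mathcal{K}(k,n)$ head-on, you identify $(\mathcal{K}(k,n))^\ast$ with $\mathbb{R}^{\binom{n}{k}}/\Lambda$ via the Euclidean pairing (which is valid because $\mathcal{K}(k,n)=\Lambda^\perp$ and $\dim\Lambda=n$, both routine to check for $2\le k\le n-2$), recognize that the frozen condition $\eta_J\equiv 0$ is exactly the statement $\mathfrak{h}_J\in\Lambda$, and then use Proposition~\ref{prop:planar basis height} to conclude that the $n$ frozen height vectors form a basis of $\Lambda$, so that the $\binom{n}{k}-n$ nonfrozen ones are independent modulo $\Lambda$. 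This buys a short, structural proof that does no new computation beyond what Proposition~\ref{prop:planar basis height} supplies; the trade-off, as you rightly flag, is that the real work — triangularity of the transition matrix from $\{\mathfrak{h}_J\}$ to $\{e^J\}$ — has been offloaded entirely onto the cited basis result. The reduction itself is sound; I see no gap.
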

	
	For the conclusion of this section, we initiate the study of polymatroidal blade arrangements; these generalize the construction of matroidal blade arrangements in \cite{Early:2019zyi}.
	
	Note that in Definition \ref{defn:polymatroidal blade arrangement} we are including unbounded generalized permutohedra as maximal cells.
	\begin{defn}\label{defn:polymatroidal blade arrangement}
		Fix an integer $r\in\mathbb{Z}$.
		
		Given lattice points $v_1,\ldots, v_M$ in an affine hyperplane where $\sum_{j=1}^n x_j=r \in\mathbb{Z}$, call the arrangement of blades $\{\beta_{v_1},\ldots, \beta_{v_M}\}$ polymatroidal if every maximal cell in the superposition of the blades is a generalized permutohedron.
	\end{defn}
	Clearly, matroidal blade arrangements, where each $v_j$ is a vertex of a hypersimplex, provide a special case of this construction.
	
	\begin{cor}\label{cor: maximal cells polymatroidal blade arrangement}
		The maximal cells of a polymatroidal blade arrangement are generalized permutohedra\footnote{It is immediate that the bounded maximal cells occurring in a polymatroidal blade arrangement are polypositroids, as introduced very recently in \cite{2020LP}.} whose facets are in hyperplanes of the form $x_i+x_{i+1}+\cdots +x_{j} = c_{i,j}$.  
	\end{cor}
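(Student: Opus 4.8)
The plan is to read the facet structure off the geometry of a single blade and then propagate it to the superposition. First I would recall, from Definition~\ref{defn:blade} and the discussion immediately following it, that the (nondegenerate) blade $\beta = ((1,2,\ldots,n))$ pinned to any lattice point $v$ in the affine hyperplane $\{x:\sum_j x_j = r\}$ is the union of the codimension-one faces of the complete fan obtained from the cyclic block rotations of a plate $\lbrack (S_1)_{s_1},\ldots,(S_\ell)_{s_\ell}\rbrack$, where the blocks $S_1,\ldots,S_\ell$ are cyclically contiguous. By the facet inequalities \eqref{eq:hypersimplexPlate}, every maximal cone of $\beta_v$ therefore lies in a hyperplane of the form $\{x: x_{S_1\cup\cdots\cup S_j} = c\}$; and since a union $S_1\cup\cdots\cup S_j$ of consecutive cyclic blocks is itself a cyclic interval $\{i,i+1,\ldots,j\}$, each such cone lies in a hyperplane $\{x: x_i+x_{i+1}+\cdots +x_j = (v)_{i,\ldots,j}\}$. (Equivalently one computes, on $\sum_l x_l = r$, that $L_a(x-v)-L_{a+1}(x-v)$ is an affine function of the single quantity $x_{\{a\}}$, and more generally $L_a(x-v)-L_b(x-v)$ an affine function of $x_{\{a,a+1,\ldots,b-1\}}$, so that every wall of $\rho_v$ is cut out by one equation of the form $x_I = \mathrm{const}$ with $I$ a cyclic interval.)

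Next I would observe that the superposition $\bigcup_{m=1}^M \beta_{v_m}$ is a codimension-one polyhedral complex inside $\{x:\sum_j x_j = r\}$, and that the maximal cells of the arrangement are exactly the closures of the connected components of its complement. Hence the boundary of each maximal cell is contained in $\bigcup_m \beta_{v_m}$, which by the previous paragraph is contained in a finite union of hyperplanes of the form $\{x_i+\cdots+x_j = c\}$. A facet $F$ of a maximal cell is a connected codimension-one polyhedron, so its affine span is a single hyperplane; being contained in a finite union of hyperplanes, $F$ must lie in exactly one of them, namely some $\{x_i+x_{i+1}+\cdots+x_j = c_{i,j}\}$. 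This is the asserted form of the facets. That the maximal cells are generalized permutohedra is built into the definition of polymatroidal (Definition~\ref{defn:polymatroidal blade arrangement}), so nothing further is needed there. For the footnote: a bounded generalized permutohedron all of whose facet normals are indicator vectors of cyclic intervals is precisely a polypositroid by the characterization of~\cite{2020LP}, so the bounded maximal cells are polypositroids.

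The step I would be most careful about is the claim that a facet of a maximal cell lies in a single cyclic-interval hyperplane, rather than being a patchwork of pieces coming from several blades and hence a priori lying across several different hyperplanes; this is resolved by the elementary fact that a $(d-1)$-dimensional affine subspace contained in a finite union of $(d-1)$-dimensional affine subspaces coincides with one of them, applied to the affine span of the facet. The only other point to make explicit is that all the blades in the arrangement are translates of the one blade $\beta = ((1,2,\ldots,n))$ (as emphasized after Definition~\ref{defn:blade}), so the hyperplanes that can occur are exactly the cyclic-interval ones $x_i+\cdots+x_j = c$ and no other subsets $S\subseteq\{1,\ldots,n\}$ appear — which is what distinguishes these polytopes, and in the bounded case forces them to be polypositroids rather than arbitrary generalized permutohedra.
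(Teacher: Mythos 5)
Your argument is correct, and it supplies the details that the paper treats as ``immediate'' (the corollary is stated without proof; the footnote calls the polypositroid claim immediate). The substance is what the paper has in mind: the blades are translates of the single blade $\beta = ((1,2,\ldots,n))$, whose nonsmooth walls sit in cyclic--interval hyperplanes, and the generalized--permutohedron property of the maximal cells is simply part of Definition~\ref{defn:polymatroidal blade arrangement}, so only the facet claim needs an argument.

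Two small points of care. First, your opening recall borrows the plate presentation $\lbrack (S_1)_{s_1},\ldots,(S_\ell)_{s_\ell}\rbrack$ with cyclically contiguous blocks, but in the paper that description is derived for $\beta_{e_J}$ \emph{intersected with} a hypersimplex $\Delta_{k,n}$, not for the full blade in the ambient hyperplane $\sum_j x_j = r$. Your parenthetical computation with $L_a - L_b$ is the clean, self-contained route and is the one to lead with: on $\sum_j (x-v)_j = 0$ one has $L_a(x-v) - L_{a+1}(x-v) = -n(x-v)_a$, and telescoping gives $L_a(x-v) - L_b(x-v) = -n\,(x-v)_{\{a,\ldots,b-1\}}$, so every tie-locus, and hence every wall of $\rho_v$, lies in a hyperplane $x_a + x_{a+1} + \cdots + x_{b-1} = c$. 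Second, the ``elementary fact'' you cite is stated for an affine subspace contained in a finite union of hyperplanes, but the facet $F$ -- not its affine span $\mathrm{aff}(F)$ -- is what is contained in the union. The correct version is that a $(d-1)$-dimensional polyhedron contained in a finite union of hyperplanes lies in one of them: the relative interior of $F$ is open in $\mathrm{aff}(F)$, and for each $H_i \neq \mathrm{aff}(F)$ the slice $H_i \cap \mathrm{aff}(F)$ has dimension $\le d-2$, so these finitely many slices cannot cover the relative interior; hence $\mathrm{aff}(F) = H_i$ for some $i$. This is what you intend and does not affect the conclusion, but as written the cited fact does not apply directly.
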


	\begin{example}
		Figure \ref{fig:hexagonblade} gives two polymatroidal blade arrangements on the vertices of a  two-dimensional generalized permutohedron.  Clearly, all of the maximal cells (some of which are unbounded) are generalized permutohedra (note that this included the unbounded case generalized permutohedra).
	\end{example}
	\begin{figure}[h!]
		\centering
		\includegraphics[width=0.6\linewidth]{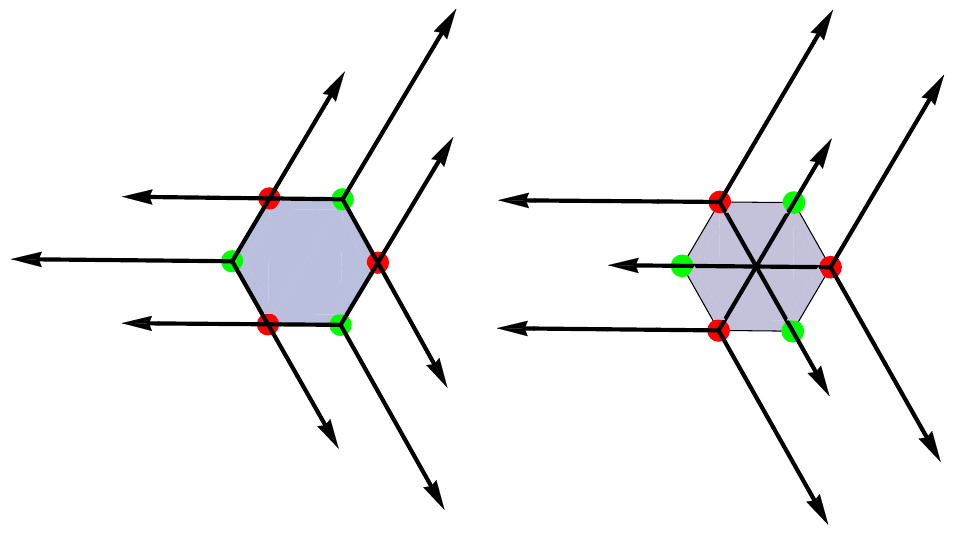}
		\caption{Two polymatroidal blade arrangements on the type $A_2$ root solid.}
		\label{fig:hexagonblade}
	\end{figure}
	
	\subsection{Polytopal Neighborhood of the Planar Kinematics Point}\label{sec: planar kinematics associahedron}\label{sec: Pikn}
	
	Recall that  $\{\alpha_{i,j}:(i,j)\in \lbrack 1,k-1\rbrack \times \lbrack 1,n-k\rbrack\}$ are coordinates on $\mathbb{R}^{(k-1)\times (n-k)}$.

	Let $(x_{i,j})_{\lbrack 1,k-1\rbrack \times \lbrack 1,n-k\rbrack}$ be auxiliary variables (appearing in the so-called web parameterization of the nonnegative Grassmannian). 
	% Here each of the zeroth variables $x_{i,0}$ is typically set to 1; in the calculation of the polytope $\Pi_{k,n}$ we find that homogeneous functions of the web variables are more useful.
	Define a codimension $k-1$ subspace $\mathcal{H}_{k,n}$ of $\mathbb{R}^{(k-1)\times (n-k)}$,
	$$\mathcal{H}_{k,n} = \left\{(\alpha_{ij}) \in \mathbb{R}^{(k-1)\times (n-k)}:  \sum_{j=1}^{n-k}\alpha_{i,j} = 0\text{ for each }i=1,\ldots, k-1\right\}.$$

	Define $\alpha_{i,\lbrack a,b\rbrack} = \sum_{j=a}^b \alpha_{i,j}$.  If $a>b$ then put $\alpha_{i,\lbrack a,b\rbrack}=0$.  More generally, put $\alpha_{I,J} = \sum_{(i,j)\in I\times J} \alpha_{i,j}$ for subsets $I \subseteq \{1,\ldots, k-1\}$ and $J \subseteq \{1,\ldots, k-n\}$.
	
	Definition \ref{defn:Pikn} contains the main construction of this section, of the lattice polytopal deformation of the PK point.
	\begin{defn}\label{defn:Pikn}
		Define a polyhedron $\Pi_{k,n} \subset \mathcal{H}_{k,n}$ by
		\begin{eqnarray}\label{eqn:PK subspace 0}
			\Pi_{k,n} = \left\{(\alpha_{ij})\in \mathcal{H}_{k,n} : \sum_{i=1}^{k-1}\alpha_{i,\lbrack j_i,j_{i+1}-i-1\rbrack}+1\ge 0,\ \  J \in \binom{\lbrack n\rbrack}{k}^{nf} \right\},
		\end{eqnarray}
		where $J = \{j_1,\ldots, j_k\}$ runs over all non-frozen subsets of $\{1,\ldots, n\}$.
	\end{defn}
	
	Then for instance if $J = \{1,4,5\}$ then correspondingly we have
	$$\alpha_{1,1} + \alpha_{1,2} +1\ge 0,$$
	while if $J = \{1,3,6\}$ then
	$$\alpha_{1,1} +\alpha_{2,23} +1 \ge 0.$$
	Note that $\alpha_{i,j} \ge -1$ is included in the set of inequalities; it follows that $\Pi_{k,n}$ is a bounded polyhedron with (at most) $\binom{n}{k}-n$ facets.

	\begin{prop}\label{prop: interior lattice point Pikn}
		For the polyhedron $\Pi_{k,n}$ we have the following two properties:
		\begin{enumerate}
			\item $\Pi_{k,n}$ has exactly $\binom{n}{k}-n$ facets.
			\item $\Pi_{k,n}$ has a unique interior lattice point $p_0$, given by $\alpha_{i,j} = 0$ for all $(i,j) \in \lbrack 1,k-1\rbrack \times \lbrack 1, n-k\rbrack$.
		\end{enumerate}
	\end{prop}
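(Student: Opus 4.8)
I would prove the two assertions separately, starting with (2). The point $p_0$, where $\alpha_{i,j}=0$ for all $(i,j)$, lies in $\mathcal{H}_{k,n}$ because every defining relation $\sum_{j=1}^{n-k}\alpha_{i,j}=0$ holds, and substituting $p_0$ into any inequality of \eqref{eqn:PK subspace 0} turns its left-hand side into $0+1$; hence all inequalities hold \emph{strictly} at $p_0$, so $p_0$ lies in the relative interior of $\Pi_{k,n}$ and $\Pi_{k,n}$ is full-dimensional inside $\mathcal{H}_{k,n}$.

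For uniqueness, the key point is that every coordinate variable is isolated by one of the inequalities of Definition \ref{defn:Pikn}. For $(i,j)\in\lbrack 1,k-1\rbrack\times\lbrack 1,n-k\rbrack$ take the subset $J_{(i,j)}=\{j,j+1,\ldots,j+i-1\}\cup\{j+i+1,j+i+2,\ldots,j+k\}$, obtained from the cyclic interval $\{j,\ldots,j+k\}$ by deleting the interior point $j+i$; since $1\le i\le k-1$ and $1\le j\le n-k$ this is a genuine non-frozen $k$-subset of $\{1,\ldots,n\}$, and checking its index ranges (as in the examples $\{1,4,5\}$ and $\{1,3,6\}$ following Definition \ref{defn:Pikn}) shows that its inequality reads precisely $\alpha_{i,j}+1\ge0$. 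Now let $(\alpha_{i,j})$ be any lattice point in the relative interior of $\Pi_{k,n}$. Then every inequality of \eqref{eqn:PK subspace 0} holds strictly, and since the quantities involved are integers, the linear part of each non-frozen inequality is in fact $\ge0$ at $\alpha$. Specializing to $J_{(i,j)}$ gives $\alpha_{i,j}\ge0$ for all $(i,j)$, and the defining relations $\sum_j\alpha_{i,j}=0$ of $\mathcal{H}_{k,n}$ then force $\alpha_{i,j}=0$; thus $p_0$ is the unique interior lattice point.

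For (1) the upper bound is immediate: $\Pi_{k,n}$ is cut out inside $\mathcal{H}_{k,n}$ by one inequality for each $J\in\binom{\lbrack n\rbrack}{k}^{nf}$, so it has at most $\binom{n}{k}-n$ facets. The content is that none of these is redundant. Write $\ell_J$ for the linear part of the $J$-th inequality, so the inequality is $\ell_J(\alpha)+1\ge0$ and $\ell_J(p_0)=0$; as a vector, $\ell_J$ is the $0/1$ indicator of the union over $i$ of the interval-blocks appearing in \eqref{eqn:PK subspace 0}, and one first checks that $J\mapsto\ell_J|_{\mathcal{H}_{k,n}}$ is injective on non-frozen subsets (the blocks determine $\{j_1,\ldots,j_k\}$). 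Since $\Pi_{k,n}$ is full-dimensional in $\mathcal{H}_{k,n}$, the $J_0$-th inequality is facet-defining as soon as there is $q\in\mathcal{H}_{k,n}$ with $\ell_{J_0}(q)=-1$ and $\ell_J(q)>-1$ for all $J\ne J_0$. I would build such a $q$ as $q=p_0-\phi/\ell_{J_0}(\phi)$, where $\phi\in\mathcal{H}_{k,n}$ satisfies $\ell_{J_0}(\phi)>0$ and $\ell_{J_0}(\phi)>\ell_J(\phi)$ for every $J\ne J_0$ --- equivalently, $\phi$ separates $\ell_{J_0}$ from the convex hull of $\{0\}$ and $\{\ell_J:J\ne J_0\}$ modulo the rowwise all-ones vectors $\mathbf{1}_1,\ldots,\mathbf{1}_{k-1}$ (which span the orthogonal complement of $\mathcal{H}_{k,n}$). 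The natural candidate is $\phi_{i,j}=+1$ on the blocks supporting $\ell_{J_0}$ with compensating negative mass placed immediately outside those blocks so that each row of $\phi$ sums to zero; one then verifies, by a case analysis on how the blocks of an arbitrary non-frozen $J$ overlap those of $J_0$, that this $\phi$ strictly separates $J_0$ from all other inequalities. The $k=2$ case, where the blocks are ordinary subintervals of $\lbrack 1,n-2\rbrack$, is a clean model for this verification and can be checked against the facet structure of the degenerate associahedron.

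The main obstacle is this final case analysis: carrying out the block bookkeeping uniformly in $(k,n)$ so as to confirm irredundancy of all $\binom{n}{k}-n$ inequalities at once. Everything else --- the interior-point check, the integrality argument in (2), and the upper bound in (1) --- is routine. An alternative worth pursuing in parallel is to go through the duality with the rank-graded root polytope $\mathcal{R}_{k,n}$: identify its $\binom{n}{k}-n$ candidate vertices explicitly (for $k=2$ these are the images of the non-frozen roots $e_a-e_b$ under the codimension-one projection) and show directly that they are in convex position, which transfers to the facet count of $\Pi_{k,n}$ by polarity about $p_0$.
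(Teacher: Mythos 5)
Your argument for part (2) is correct and is essentially the paper's, cast in different language. The paper notes that the combination of $\alpha_{i,j}\ge -1$ (which you verify is among the facet inequalities via the isolating subsets $J_{(i,j)}$, a nice explicit check) with the row-sum constraints forces $\Pi_{k,n}$ into a Cartesian product of $(n-k)$-fold dilates of $(n-k-1)$-simplices, each of which has the origin as its sole interior lattice point; your version replaces the product-of-simplices picture with the equivalent direct integrality argument $\ell_{J}(\alpha)>-1,\ \ell_J(\alpha)\in\mathbb{Z}\Rightarrow\ell_J(\alpha)\ge 0$, applied to the $J_{(i,j)}$ and then to $\sum_j\alpha_{i,j}=0$. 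Both reach the same conclusion by the same underlying mechanism; yours is arguably a bit more self-contained.

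For part (1), the paper's proof is in fact \emph{terser} than yours: it states only that the linear forms $\ell_J=\sum_i\alpha_{i,[j_i,j_{i+1}-i-1]}$ are ``evidently additively independent'' and that they are therefore minimized on distinct facets, without spelling out why no $\ell_{J_0}$ is a nonnegative combination of the others modulo lineality. Your constructive plan --- build $\phi\in\mathcal{H}_{k,n}$ with $\ell_{J_0}(\phi)>\max\{0,\ \ell_J(\phi):J\ne J_0\}$ by placing mass $+1$ on the blocks of $J_0$ and compensating negative mass just outside, then set $q=p_0-\phi/\ell_{J_0}(\phi)$ --- is a legitimate way to exhibit each facet, and it is correct in spirit; it amounts precisely to showing that each $v_{J}$ is a vertex of the dual root polytope $\mathcal{R}_{k,n}$, which is exactly the duality recorded in Remark \ref{prop: Pikn reflexive}. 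But as you yourself flag, the block-overlap case analysis is not carried out, so your (1) is at present a plan, not a proof. Given that the paper also elides this step, I would not call this a gap relative to the source; but to make your write-up self-standing you either need to finish the block bookkeeping, or establish convex position of the $v_J$ in $\mathcal{R}_{k,n}$ directly, or make precise and justify the assertion that no $\ell_{J_0}$ lies in the conical hull of $\{0\}\cup\{\ell_J:J\ne J_0\}$ modulo the row-constant directions.
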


	\begin{proof}
		First note that the point $p_0$ with all coordinates $\alpha_{i,j}=0$, for $(i,j) \in \lbrack 1,k-1\rbrack \times \lbrack 1,n-k\rbrack$ satisfies all $\binom{n}{k}-n$ inequalities, but it does not minimize any of them.   Consequently $\Pi_{k,n}$ is nonempty and has the full dimension $(k-1)(n-k) - (k-1) = (k-1)(n-k-1)$.  
		
		For (1), we have that the $\binom{n}{k}-n$ facet inequalities are of the form
		$$ \sum_{i=1}^{k-1}\alpha_{i,\lbrack j_i,j_{i+1}-i-1\rbrack} \ge -1$$
		from which it is evident that they are additively independent and  consequently are minimized on distinct facets of $\Pi_{k,n}$.
		
		For (2) we finally claim that $p_0$ is the only interior lattice point.  Indeed, first note that  $\Pi_{k,n}$ lies inside the cube where $\lbrack -1, n-k-1\rbrack^{(k-1)(n-k)}$ and satisfies $\sum_{j=1}^{n-k}\alpha_{i,j}=0$ for each $i=1,\ldots, k-1$.  In particular, it lives in a Cartesian product of k-1 copies of the $(n-k)^{th}$ dilate of a simplex of dimension $n-k-1$, and each of these has exactly one interior lattice point at the origin.  Therefore the interior lattice point of $\Pi_{k,n}$ projects uniquely onto the origin in each copy. 
		
		The result follows.

	\end{proof}

	\begin{example}\label{example: PKpolytope 36}
		The polyhedron $\Pi_{3,6}$ is cut out by 14 facet inequalities in the codimension two subspace $\mathcal{H}_{3,6}$ of $\mathbb{R}^{(2)\times (3)}$ that is characterized by 
		$$\alpha_{1,123} = \alpha_{1,1} + \alpha_{1,2} + \alpha_{1,3} = 0,$$
		$$\alpha_{2,123} = \alpha_{2,1} + \alpha_{2,2} + \alpha_{2,3} = 0.$$   
		% $$\alpha_{1,123} = \alpha_{1,1} + \alpha_{1,2} + \alpha_{1,3} = 3,$$
		Now, $\alpha_{i,j} \ge -1$ accounts for $2\cdot 3=6$ facets.  The remaining $8$ facets minimize the following inequalities:
		\begin{eqnarray}
			\alpha_{i,1} + \alpha_{i,2} +1\ge  0,&\  \ \alpha_{i,2} + \alpha_{i,3} +1 \ge 0& \nonumber\\
			\alpha_{1,1} + \alpha_{2,2} +1 \ge0,& \  \ \alpha_{2,2} + \alpha_{2,3}  +1 \ge 0 &\\
			\alpha_{1,1} + \alpha_{1,2} + \alpha_{2,3} +1 \ge 0,& \  \ \alpha_{1,1} + \alpha_{2,2} + \alpha_{2,3} +1 \ge 0,& \nonumber 
		\end{eqnarray}
		where in the first line $i=1,2$.  Moreover, $\Pi_{k,n}$ has f-vector $(1,27,60,47,14,1)$; hence Euler characteristic zero.  It is interesting to note that the f-vector is the reverse of the one in Example \ref{example: P36}; we expect that this will be true in general for the two families of polytopes.
	\end{example}
	
	Recall from Proposition \ref{prop:planar basis} that the set of $\binom{n}{k}-n$ planar kinematic invariants $\eta_J(s):\mathcal{K}(k,n) \rightarrow \mathbb{R}$ is a basis for the space of linear functions on the kinematic space; we will use this property in the following construction to give an embedding of $\mathbb{R}^{(k-1)\times(n-k)}$ into $\mathcal{K}(k,n)$.
	
	For each $\alpha \in \mathbb{R}^{(k-1)\times(n-k)}$ we define a point in the kinematic space $s(\alpha)\in \mathcal{K}(k,n)$ by solving the system of equations 
	\begin{eqnarray}\label{eqn:PK subspace}
		\left\{\eta_J(s) = \sum_{i=1}^{k-1}\alpha_{i,\lbrack j_i,j_{i+1}-i-1\rbrack} +1,\ \ \eta_{i,i+1,\ldots, i+k-1}(s) = 0: J \in \binom{\lbrack n\rbrack}{k}^{nf},\ i=1,\ldots, n\right\}
	\end{eqnarray}
	for the coordinate functions $s_J$ on $\mathcal{K}(k,n)$.
	
	This gives rise to an embedding $\Psi:\mathbb{R}^{(k-1)\times(n-k)} \hookrightarrow \mathcal{K}(k,n)$,
	\begin{eqnarray}\label{eq:Psi}
		\Psi(\alpha) = s(\alpha),
	\end{eqnarray}
	which restricts to an embedding of $\Pi_{k,n}$ into a $(k-1)(n-k-1)$-dimensional subspace of the kinematic space.
	
	Now $\Pi_{k,n}$ has an interesting compatibility with planar kinematics, as in Proposition \ref{prop: PK point}.
	
	\begin{prop}\label{prop: PK point}
		We have that $\eta_J(\Psi(p_0)) = 1$ whenever $J\in\binom{\lbrack n\rbrack}{k}^{nf}$ and otherwise $\eta_J(\Psi(p_0)) = 0$ when $J$ is frozen, that is $\Psi(p_0) \in \mathcal{K}(k,n)$ is the planar kinematics point.
	\end{prop}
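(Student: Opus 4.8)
The plan is to unwind the definition of $\Psi$ at the point $p_0$ and then match it with the characterization of planar kinematics recalled in Section~\ref{sec:introduction}. Recall that $p_0\in\Pi_{k,n}$ has all coordinates $\alpha_{i,j}=0$, so that for \emph{every} nonfrozen $J=\{j_1,\ldots,j_k\}$ the right-hand side of the defining system \eqref{eqn:PK subspace} collapses to
$$\sum_{i=1}^{k-1}\alpha_{i,\lbrack j_i,\,j_{i+1}-i-1\rbrack}+1 \;=\; 0+1 \;=\; 1 .$$
Hence, by construction, $\Psi(p_0)$ is the point $s\in\mathcal{K}(k,n)$ that solves
$$\bigl\{\eta_J(s)=1:\ J\in\binom{\lbrack n\rbrack}{k}^{nf}\bigr\}\ \cup\ \bigl\{\eta_{i,i+1,\ldots,i+k-1}(s)=0:\ i=1,\ldots,n\bigr\}.$$

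First I would dispose of the frozen equations: as recalled above (following \cite{Early:2020hap}), when $J=\{i,i+1,\ldots,i+k-1\}$ is frozen the piecewise-linear surface $\rho_J$ over $\Delta_{k,n}$ has no bends, so the linear functional $\eta_J$ is \emph{identically zero} on $\mathcal{K}(k,n)$. Thus $\eta_J(\Psi(p_0))=0$ for every frozen $J$ with no work at all, and the frozen equations above impose no constraint. For the nonfrozen equations I would invoke Proposition~\ref{prop:planar basis}: the functionals $\{\eta_J:\ J\in\binom{\lbrack n\rbrack}{k}^{nf}\}$ form a basis of $\left(\mathcal{K}(k,n)\right)^\ast$, so the inhomogeneous system $\eta_J(s)=1$, taken over all nonfrozen $J$, has a unique solution in $\mathcal{K}(k,n)$; this solution is exactly $\Psi(p_0)$, and it satisfies $\eta_J(\Psi(p_0))=1$ for all nonfrozen $J$. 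Combining the two cases gives both displayed identities in the statement, and comparing with Section~\ref{sec:introduction} — where the PK point is the unique point of $\mathcal{K}(k,n)$ on which $\eta_J$ takes the value $0$ on frozen subsets and $1$ on nonfrozen subsets — we conclude that $\Psi(p_0)$ is the planar kinematics point.

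As a consistency check (not logically needed, but reassuring), one can verify directly that the coordinates $s_J$ of $\Psi(p_0)$ are exactly those listed in \eqref{planarK}: using $\eta_J=-\mathfrak{h}_J\cdot s$ one either inverts the basis $\{\mathfrak{h}_J\}$ of Proposition~\ref{prop:planar basis height}, or one substitutes the candidate vector \eqref{planarK} and checks, via the $\min$-of-linear-functions description of each $\rho_J$ in terms of $L_1,\ldots,L_n$, that $-\mathfrak{h}_J\cdot s=1$ for every nonfrozen $J$; this is a routine computation.

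The only point requiring any care is the bookkeeping showing that at $p_0$ the right-hand sides of \eqref{eqn:PK subspace} are genuinely all equal to $1$ (the $+1$ in Definition~\ref{defn:Pikn}) and that the frozen equations are consistent — both of which follow at once from $\alpha_{i,j}=0$ and from the identical vanishing of the frozen $\eta_J$. There is no substantive obstacle: the content of the proposition is carried entirely by the basis property in Proposition~\ref{prop:planar basis}.
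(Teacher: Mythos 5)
Your proof is correct and takes exactly the route the paper intends: the paper states the proposition without an explicit proof, treating it as an immediate consequence of the definitions, and your argument is just a careful unwinding of those definitions (set $\alpha=0$ in the defining system \eqref{eqn:PK subspace}, observe the frozen $\eta_J$ vanish identically, and invoke the basis property in Proposition~\ref{prop:planar basis} for existence and uniqueness of the solution to the nonfrozen equations).
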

	
	In other words, the unique lattice point inside $\Pi_{k,n}$ \textit{is} the planar kinematics point!
	
	For instance, for the embedding $\Pi_{3,6} \hookrightarrow \mathcal{K}(3,6)$, we have
	$$\eta_{134} = \alpha_{1,1}+1,\ \eta_{245} = \alpha_{1,2}+1,\ \ \eta_{356} = \alpha_{1,3}+1,\ \eta_{145} = \alpha_{1,12}+1,\ \ \eta_{256} = \alpha_{1,23}+1,$$
	$$\eta_{124} = \alpha_{2,1}+1,\ \ \eta_{235} = \alpha_{2,2}+1,\ \ \eta_{346} = \alpha_{2,3}+1,\ \eta_{125} = \alpha_{2,12} + 1,\ \eta_{236} = \alpha_{2,23} + 1$$
	$$\eta_{135} = \alpha_{1,1} + \alpha_{2,2} + 1,\ \eta_{136} = \alpha_{1,1} + \alpha_{1,23} + 1,\ \eta_{146} = \alpha_{1,12} + \alpha_{2,3} + 1$$
	$$\ \eta_{246} = \alpha_{1,2} + \alpha_{2,3} + 1.$$
	In particular, the center $p_0=0\in \Pi_{k,n}$ is pushed to the PK point, where $\eta_J=1$.  Further, comparing with Example \ref{example: PKpolytope 36}, then the facet inequalities cutting out $\Pi_{3,6}$ become the exactly the conditions for the planar invariants $\eta_J$ to be nonnegative.
	
	We conclude this section with a proposal for the expression of $\Pi_{k,n}$ as a Newton polytope.
	
	\begin{claim}
		For any $2\le k\le n-2$, then the polyhedron $\Pi_{k,n}$ is equal to the Newton polytope of the Laurent polynomial appearing in Equation \eqref{PK potential general form web}:
		\begin{eqnarray}\label{eq:Laurent polynomial PK}
			\frac{\prod_{i=1}^{k-1}P_i(\mathbf{x})\prod_{j=1}^{n-k-1}Q_j(\mathbf{x})}{\prod_{(i,j)}x_{i,j}},
		\end{eqnarray}
		where
		\begin{eqnarray}\label{PK minors web2}
			P_i(\mathbf{x}) & = & \sum_{j=1}^{n-k} x_{i,j},\\
			Q_j(\mathbf{x}) & = & x_{1,j}x_{2,j}\cdots x_{k-1,j} + x_{1,j}x_{2,j}\cdots x_{k-2,j}x_{k-1,j+1}+x_{1,j}x_{2,j}\cdots x_{k-3,j}x_{k-2,j+1}x_{k-1,j+1}\nonumber\\
			& + & \cdots  +x_{1,j+1}x_{2,j+1}\cdots x_{k-1,j+1}.\nonumber
		\end{eqnarray}
		In fact, note that directly from Equation \eqref{PK minors web2}, by calculating total degrees of the variables $x_{i,j}$, it follows that the Newton polytope for Equation \eqref{eq:Laurent polynomial PK} is in the subspace $\mathcal{H}_{k,n}$ and in particular it contains the origin in its interior.  
	\end{claim}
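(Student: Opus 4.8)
The plan is to identify $\mathrm{Newt}(L)$, the Newton polytope of the Laurent polynomial $L=N/D$ in Equation \eqref{eq:Laurent polynomial PK} (with $N=\prod_iP_i\prod_jQ_j$ and $D=\prod_{(i,j)}x_{i,j}$), as an explicit Minkowski sum, and then to match it with the inequality description of $\Pi_{k,n}$ in Definition \ref{defn:Pikn}. Since $D$ is a single monomial, $\mathrm{Newt}(L)=\mathrm{Newt}(N)-\mathbf 1$ with $\mathbf 1$ the all-ones exponent vector, and since all coefficients of the $P_i,Q_j$ are positive there is no cancellation in the product, so $\mathrm{Newt}(N)=\sum_i\mathrm{Newt}(P_i)+\sum_j\mathrm{Newt}(Q_j)$, where $\mathrm{Newt}(P_i)=\mathrm{conv}\{e^{(i)}_1,\dots,e^{(i)}_{n-k}\}$ is a standard simplex supported in row $i$ and $\mathrm{Newt}(Q_j)$ is the simplex on the $k$ affinely independent lattice points $M_j(b)=\sum_{i<b}e^{(i)}_j+\sum_{i\ge b}e^{(i)}_{j+1}$, $b=1,\dots,k$. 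The total-degree count already flagged in the statement places $\mathrm{Newt}(L)$ in $\mathcal H_{k,n}$ with $p_0=\mathbf 0$ in its interior, so it remains to show that $\mathrm{Newt}(L)$ is cut out inside $\mathcal H_{k,n}$ by precisely the $\binom nk-n$ inequalities of Definition \ref{defn:Pikn}.

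For $\mathrm{Newt}(L)\subseteq\Pi_{k,n}$, fix a non-frozen $J=\{j_1<\dots<j_k\}$, let $a_J$ be the $0/1$ vector defining the facet inequality $\langle a_J,\alpha\rangle\ge-1$, and perform the one-parameter substitution $x_{i,j}\mapsto x_{i,j}\,t^{a_J(i,j)}$; then the support-function computation reads $\min_{\alpha\in\mathrm{supp}(L)}\langle a_J,\alpha\rangle=\mathrm{val}_t L=\sum_i\mathrm{val}_tP_i+\sum_j\mathrm{val}_tQ_j-|a_J|$ by additivity of $t$-valuation, where $|a_J|=\sum_{i,j}a_J(i,j)$. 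The crucial point is that $a_J$ restricted to each row $i$ is the indicator of a column-interval that is a \emph{proper} subinterval of $\{1,\dots,n-k\}$ precisely because $J$ is non-frozen: if that interval were all of $\{1,\dots,n-k\}$, one checks $J$ would then necessarily be a cyclic interval. Hence $\mathrm{val}_tP_i=\min_j a_J(i,j)=0$ for every $i$, and the claim reduces to the combinatorial identity $\sum_{j=1}^{n-k-1}\mathrm{val}_tQ_j=|a_J|-1$, which follows from $\mathrm{val}_tQ_j=\min_b\bigl(\sum_{i<b}a_J(i,j)+\sum_{i\ge b}a_J(i,j+1)\bigr)$ by a direct count exploiting the staircase shape of $a_J$. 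This yields $\min_{\alpha\in\mathrm{supp}(L)}\langle a_J,\alpha\rangle=-1$ for every non-frozen $J$, and the minimum is attained (it is the minimum over a sum of monomials with positive coefficients), so every defining inequality of $\Pi_{k,n}$ holds and is tight on $\mathrm{Newt}(L)$; in particular $\mathrm{Newt}(L)\subseteq\Pi_{k,n}$.

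For the reverse inclusion, recall from Proposition \ref{prop: interior lattice point Pikn} that $\Pi_{k,n}$ has exactly $\binom nk-n$ facets. It suffices to show that each facet $F_J=\Pi_{k,n}\cap\{\langle a_J,\alpha\rangle=-1\}$ lies inside $\mathrm{Newt}(L)$: since every vertex of $\Pi_{k,n}$ lies on some facet, this forces every vertex of $\Pi_{k,n}$ into $\mathrm{Newt}(L)$, hence $\Pi_{k,n}\subseteq\mathrm{Newt}(L)$, which together with the previous paragraph completes the proof. I would establish $F_J\subseteq\mathrm{Newt}(L)$ from the Minkowski description: $\alpha\in\mathrm{Newt}(L)$ iff $\alpha+\mathbf 1$ decomposes as $\sum_ip_i+\sum_jq_j$ with $p_i\in\mathrm{Newt}(P_i)$ and $q_j\in\mathrm{Newt}(Q_j)$; reading off the row-$i$ coordinates turns this into a transportation-type feasibility system in the simplex parameters $p_{i,c}\ge0$ and the chain parameters $1\ge\lambda^{(j)}_1\ge\dots\ge\lambda^{(j)}_{k-1}\ge0$, and the goal is to recognize the inequalities $\langle a_J,\alpha\rangle\ge-1$ defining $\Pi_{k,n}$ as exactly the Hall/Hoffman-type circulation conditions making that system solvable. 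For $k=2$ this should recover the known realization of $\Pi_{2,n}$ as a degeneration of the associahedron.

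The main obstacle is this reverse inclusion: it needs a usable handle on the (numerous) vertices of $\Pi_{k,n}$ — equivalently on the facets of the Minkowski sum $\sum_i\mathrm{Newt}(P_i)+\sum_j\mathrm{Newt}(Q_j)$ — together with a matching to the ``generic'' monomials of $L$, namely those for which the term selected from each $P_i$ and each $Q_j$ is unambiguous. The identity $\sum_j\mathrm{val}_tQ_j=|a_J|-1$ is also slightly delicate, though finite and elementary. Everything else — absence of cancellation in the product, the total-degree and interior-point remarks, and attainment of the facet minima — is routine.
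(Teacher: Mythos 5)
The paper does not actually prove this statement: it is stated explicitly as a \emph{Claim} introduced by the phrase ``we conclude this section with a \emph{proposal} for the expression of $\Pi_{k,n}$ as a Newton polytope,'' and the only argument the authors supply is the total-degree observation at the end of the Claim, which shows merely that the Newton polytope lies in $\mathcal{H}_{k,n}$ and contains the origin in its interior. So there is no proof to compare against, and any complete argument you gave here would be genuinely new content.

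That said, your proposal does not yet constitute such a proof, and you say so yourself. The reduction $\mathrm{Newt}(L)=\sum_i\mathrm{Newt}(P_i)+\sum_j\mathrm{Newt}(Q_j)-\mathbf{1}$ is sound (positive coefficients, no cancellation), and the strategy for the inclusion $\mathrm{Newt}(L)\subseteq\Pi_{k,n}$ via the lower support function along the directions $a_J$ is reasonable; but that direction hinges on the identity $\sum_{j=1}^{n-k-1}\mathrm{val}_tQ_j=\lvert a_J\rvert-1$, which you assert rather than establish (I checked it for $k=2$; the general staircase count is plausible but not trivial). More seriously, the reverse inclusion $\Pi_{k,n}\subseteq\mathrm{Newt}(L)$ is entirely missing: tightness of all $\binom{n}{k}-n$ inequalities on $\mathrm{Newt}(L)$ does not force equality (a square rotated $45^\circ$ inside a larger square touches every facet and is strictly smaller), so the Hall/Hoffman feasibility argument you sketch for each facet $F_J$ would genuinely need to be carried out. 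As written, the proposal establishes (modulo the unproven valuation identity) only one inclusion of the claimed equality; the other inclusion remains open, matching the paper's own treatment of this as an unproven conjecture.
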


	\subsection{Rank-Graded Root Polytopes and Their Volumes}\label{sec:PK polytope}
	
	In what follows, we initiate the study of a polytope which is in duality with $\Pi_{k,n}$, modulo a change of coordinates.  We first introduce a $(k-1)(n-k)$-dimensional polytope $\hat{\mathcal{R}}_{k,n}$, and its $(k-1)(n-k-1)$-dimensional projection $\mathcal{R}_{k,n}$.  Our primary aim is to investigate properties of the latter.  By abuse of terminology we may call both $\hat{\mathcal{R}}_{k,n}$ and its projection $\mathcal{R}_{k,n}$ root polytopes, but it will be clear from the context which one we mean.
	
	After providing computational evidence, at the end of the section we formulate Conjecture \ref{conjecture: PK volume}, that the volume of $\mathcal{R}_{k,n}$ is 
	$$\text{Vol}(\mathcal{R}_{k,n}) = \frac{C^{(k)}_{n-k}}{((k-1)(n-k-1))!}.$$
	
	On the other hand, a dimension $k(n-k)$ polytope analogous to $\mathcal{R}_{k,n}$, called the superpotential polytope $\Gamma_G$, was studied in \cite{rietsch2019newton}.  Rewriting the formula given in Proposition 16.8 of  \cite{rietsch2019newton}, the volume is 
	$$\text{Vol}(\Gamma_G) = \frac{\cat{k}{n-k}}{((k)(n-k))!}.$$

	In Example \ref{example: type A root polytope}, we check that $\mathcal{R}_{2,5}$ coincides with (a projection of) the so-called root polytope of type $A_{2}$.  The identification clearly extends to any $n$.  
	
	In particular, $\hat{\mathcal{R}}_{2,n}$ is the largest among the family of root polytopes introduced in \cite{postnikov2009permutohedra}; these are by construction the convex hull of the origin together with all positive roots $e_i-e_j$ with $i<j$.
	
	Recall that $\{e_{i,j}: 1\le i\le k-1,\ \ 1\le j\le n-k \}$ is the standard basis for $\mathbb{R}^{(k-1)\times (n-k)}$, and $\{\alpha_{i,j}: 1\le i\le k-1,\ \ 1\le j\le n-k \}$ is the set of coordinate functions.
	
	The polytope $\hat{\mathcal{R}}_{k,n}$ lives in the space 
	$$\hat{\mathcal{H}}_{k,n} = \left\{(\alpha_{ij}) \in \mathbb{R}^{(k-1)\times (n-k+1)}:  \sum_{j=1}^{n-k+1}\alpha_{i,j} =0\text{ for each }i=1,\ldots, k-1\right\}.$$
	Recall also the subspace of $\hat{\mathcal{H}}_{k,n}$,
	$$\mathcal{H}_{k,n} =  \left\{(\alpha_{ij}) \in \mathbb{R}^{(k-1)\times (n-k)}:  \sum_{j=1}^{n-k}\alpha_{i,j} =0\text{ for each }i=1,\ldots, k-1\right\}$$
	and denote by $\text{proj}_{k,n}: \hat{\mathcal{H}}_{k,n}\rightarrow \mathcal{H}_{k,n}$ the projection determined by $\text{proj}_{k,n}(e_{i,j}) = e_{i,j}$ for $j\le n-k$, and 
	$$\text{proj}_{k,n}(e_{i,n-k+1}) = e_{i,1}.$$
	
	Finally, for each nonfrozen $J = \{j_1,\ldots, j_k\} \in \binom{\lbrack n\rbrack}{k}^{nf}$, let
	\begin{eqnarray}\label{eqn: vertices Pk root}
		\hat{v}_J & = & \sum_{\lambda=1}^{k-1}(e_{\lambda,j_{\lambda}-(\lambda-1)} - e_{\lambda,j_{\lambda+1}-(\lambda-1)-1}).
	\end{eqnarray}
	\begin{defn}
		The polytope $\hat{\mathcal{R}}_{k,n} \subset \hat{\mathcal{H}}_{k,n}$ is the convex hull of the origin, together with the following $\binom{n}{k}-n$ points: 
		$$\left\{\hat{v}_J: J\in \binom{\lbrack n\rbrack}{k}^{nf}\right\},$$
		as well as the $k-1$ points
		$$\left\{e_{i,1} - e_{i,n-k+1}: i=1,\ldots, k-1 \right\}.$$
	\end{defn}
	
	Now define 
	$$	v_J = \sum_{\lambda=1}^{k-1}(e_{\lambda,j_{\lambda}-(\lambda-1)} - e_{\lambda,j_{\lambda+1}-(\lambda-1)-1}),
	$$
	where now the subscripts are now by convention taken modulo $n-k$.
	
	\begin{defn}
		The polytope $\mathcal{R}_{k,n} \subset \mathcal{H}_{k,n}$ is the convex hull of the following $\binom{n}{k}-n$ points: 
		$$\left\{v_J: J\in \binom{\lbrack n\rbrack}{k}^{nf}\right\}.$$
	\end{defn}
	Note that the origin is already in the convex hull.
	
	Observe that
	$$\text{proj}_{k,n}(e_{i,1} - e_{i,n-k+1}) = 0$$
	for all $i=1,\ldots, k-1$, and consequently $\text{proj}_{k,n}(\hat{\mathcal{R}}_{k,n}) = \mathcal{R}_{k,n}$.
	
	\begin{rem}\label{prop: Pikn reflexive}
		We claim that the polytope $\mathcal{R}_{k,n}$ introduced in the Section \ref{sec:PK polytope} is in duality with the polyhedron $\Pi_{k,n}$ defined in Section \ref{sec: Pikn}.  This can be seen as follows.  Define new elements
		$$f_{i,j} = e_{i,j} - e_{i,j+1}.$$
		Then the vertices of $\mathcal{R}_{k,n}$ take the form
		$$v_J = \sum_{i=1}^{k-1} f_{i,\lbrack j_i,j_{i+1}-i-1\rbrack},$$
		which dualizes to the linear function defining the facet hyperplane
		$$\sum_{i=1}^{k-1} \alpha_{i,\lbrack j_i,j_{i+1}-i-1\rbrack}+ 1 = 0,$$
		of $\Pi_{k,n}$.
	\end{rem}
	
	\begin{example}\label{example: type A root polytope}
		Consider the type $A_2$ root polytope.  In the present convention this is equal to 
		$$\hat{\mathcal{R}}_{2,5} = \text{convex hull} \{0,e_{1,1}-e_{1,2},e_{1,1}-e_{1,3},e_{1,1}-e_{1,4},e_{1,2}-e_{1,3},e_{1,2}-e_{1,4},e_{1,3}-e_{1,4}\} \subset \hat{\mathcal{H}}_{2,5}.$$
		Now its projection is
		$$\mathcal{R}_{2,5} = \text{proj}_{2,5}(\hat{\mathcal{R}}_{k,n})= \text{convex hull} \{e_{1,1}-e_{1,2},e_{1,1}-e_{1,3},e_{1,2}-e_{1,3},e_{1,2}-e_{1,1},e_{1,3}-e_{1,1}\} \subset \mathcal{H}_{2,5},$$
		that is,
		$$\mathcal{R}_{2,5} = \text{convex hull}\left\{v_{1,3},v_{1,4},v_{2,4},v_{2,5},v_{3,5} \right\}.$$
	\end{example}

	\begin{example}\label{example: P36}
		Let us also present $\mathcal{R}_{3,6}$ explicitly.  This is the convex hull in $\mathcal{H}_{3,6}$ of the following 14 points:
		\begin{eqnarray}
			\begin{array}{cc}
				v_{1,2,4} & e_{2,1}-e_{2,2} \\
				v_{1,2,5} & e_{2,1}-e_{2,3} \\
				v_{1,3,4} & e_{1,1}-e_{1,2} \\
				v_{1,3,5} & e_{1,1}-e_{1,2}+e_{2,2}-e_{2,3} \\
				v_{1,3,6} & e_{1,1}-e_{1,2}-e_{2,1}+e_{2,2} \\
				v_{1,4,5} & e_{1,1}-e_{1,3} \\
				v_{1,4,6} & e_{1,1}-e_{1,3}-e_{2,1}+e_{2,3} \\
				v_{2,3,5} & e_{2,2}-e_{2,3} \\
				v_{2,3,6} & e_{2,2}-e_{2,1} \\
				v_{2,4,5} & e_{1,2}-e_{1,3} \\
				v_{2,4,6} & e_{1,2}-e_{1,3}-e_{2,1}+e_{2,3} \\
				v_{2,5,6} & e_{1,2}-e_{1,1} \\
				v_{3,4,6} & e_{2,3}-e_{2,1} \\
				v_{3,5,6} & e_{1,3}-e_{1,1}.
			\end{array}
		\end{eqnarray}
		Using a computer program such as SageMath, one finds that $\mathcal{R}_{3,6}$ has f-vector given by
		$$(1,14,47,60,27,1),$$ and volume
		$$\text{Volume}(\mathcal{R}_{3,6}) = \frac{42}{4!} = \frac{\cat{3}{3}}{4!},$$
		where $\cat{k}{n-k}$ is the multi-dimensional Catalan number.
	\end{example}
	
	Similarly we find that $\mathcal{R}_{3,7}$, ($\mathcal{R}_{3,8}$ and $\mathcal{R}_{5,8}$), $\mathcal{R}_{3,9}$ and $\mathcal{R}_{3,11}$ have f-vectors respectively 
	\begin{eqnarray*}
		&(1, 28, 178, 483, 661, 456, 128, 1),&\\
		&(1,48,486,2122,5030,7048,5895,2750,557,1),&\\
		&(1,75,1108,6948,24170,52281,73891,68921,41244,14474,2286,1),&\\
		&(1, 154, 4179, 45769, 278224, 1081720, 2898751, 5583293, 7902473, 8280735, 6383651,&\\
		& 3537888, 1341425, 313380, 34236, 1)&   
	\end{eqnarray*}
	while $\mathcal{R}_{4,8}$, $\mathcal{R}_{4,9}$ have f-vectors
	\begin{eqnarray*}
		&(1, 62, 770, 4048, 11653, 20409, 22559, 15524, 6133, 1074, 1),&\\
		&(1, 117, 2441, 20488, 94620, 275905, 544210, 750799, 731318, 496454, 225059, 61668, 7783, 1).&\\
	\end{eqnarray*}
	Moreover, as in Example \ref{example: P36} we find that the volumes are the fractions
	$$\text{Vol}(\mathcal{R}_{k,n}) = \frac{\cat{k}{n-k}}{((k-1)(n-k-1))!},$$
	for $\mathcal{R}_{3,n}$ with $n\le 9$ and $\mathcal{R}_{4,n}$ with $n\le 9$.  In particular the \textit{relative} volume is the multi-dimensional Catalan number $\cat{k}{n-k}$ itself.
	
	Using SageMath we were also able to compute the f-vectors of $\mathcal{R}_{4,10}$ and $\mathcal{R}_{5,10}$ (in about ten hours), respectively
	\begin{eqnarray*}
		&& (1, 200, 6463, 79151, 525529, 2217016, 6460534, 13639822, 21436558, 25407704, 22742748,\\
		&&15211454, 7404964, 2490478, 520155, 51128, 1)
	\end{eqnarray*}
	and
	\begin{eqnarray*}
		&& (1, 242, 9041, 123808, 907951, 4218658, 13571560, 31822956, 56070720, 75497722, 78187219,\\
		&&62086930, 37284006, 16453106, 5055558, 970826, 88193, 1).
	\end{eqnarray*}

	Now we arrive at the initial raison d'etre for rank-graded root polytopes: their (relative) volume computes the generalized biadjoint scalar $m^{(k)}(\mathbb{I}_n,\mathbb{I}_n)$ at the planar kinematic point!  Indeed, this demonstrates a compatibility with the discussion in Section 2.2 of \cite{Arkani-Hamed:2019mrd} concerning the volume of the dual polytope. 
	
	Moreover, we see directly that the number of facets of $\mathcal{R}_{3,6}, \mathcal{R}_{3,7}$ and $\mathcal{R}_{3,8}$ in the f-vectors listed above coincide with the number of linear domains used in Section \ref{sec:tropGrass Eval} to evaluate $m^{(k)}(\mathbb{I}_n,\mathbb{I}_n)$ as a Laplace type transform at the planar kinematics point.
	
	For a small preview of the structure of rank graded root polytope $\hat{\mathcal{R}}_{k,n}$ and its projection $\mathcal{R}_{k,n}$, we observe a basic feature of their face posets which can be easily verified, by simply expanding the vertices $v_J$ in the standard basis of $e_{i,j}$'s as in Equation \eqref{eqn: vertices Pk root}.
	\begin{prop}\label{prop: graded embeddings root polytopes}
		Whenever $2\le k-1<k\le n-2$ then we have $k-1$ natural embeddings
		$$\hat{\mathcal{R}}_{k-1,n-1} \simeq \hat{\mathcal{R}}_{k,n} \cap \left\{\alpha \in \hat{\mathcal{H}}_{k,n}: \alpha_{i,j} = 0\text{ for all } j=1,\ldots, n-k \right\},$$
		one for each $i=1,\ldots, k-1$, and similarly for $\mathcal{R}_{k,n}$.
	\end{prop}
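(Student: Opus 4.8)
The plan is to realize the $i$-th of the asserted natural embeddings --- call it $\iota_i$ --- as the linear isomorphism from $\hat{\mathcal{H}}_{k-1,n-1}$ (resp.\ $\mathcal{H}_{k-1,n-1}$) onto the subspace of $\hat{\mathcal{H}}_{k,n}$ (resp.\ $\mathcal{H}_{k,n}$) that inserts a zero $i$-th row; inside these ambient spaces, where $\sum_{j}\alpha_{i,j}=0$, the condition ``$\alpha_{i,j}=0$ for $j=1,\ldots,n-k$'' is exactly the vanishing of the $i$-th row, so the image of $\iota_i$ is precisely that locus. I would then locate the vertices of $\hat{\mathcal{R}}_{k,n}$ (resp.\ $\mathcal{R}_{k,n}$) lying on this locus, show the slice is their convex hull, and biject them with the vertices of $\hat{\mathcal{R}}_{k-1,n-1}$ (resp.\ $\mathcal{R}_{k-1,n-1}$). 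For the first part, reading off \eqref{eqn: vertices Pk root}, the $i$-th row of $\hat v_J$ is the single vector $e_{i,\,j_i-(i-1)}-e_{i,\,j_{i+1}-(i-1)-1}$; since $j_i\ge i$ and $j_{i+1}\le n-k+i+1$ its two column indices lie in $\{1,\ldots,n-k+1\}$, so it is either $0$ or a positive root $e_{i,a}-e_{i,b}$ ($1\le a<b\le n-k+1$) of the type $A_{n-k}$ system sitting in the $i$-th row of $\hat{\mathcal{H}}_{k,n}$, and it vanishes exactly when $j_{i+1}=j_i+1$; the extra vertices $e_{i',1}-e_{i',n-k+1}$ have vanishing $i$-th row iff $i'\ne i$, and the origin always does. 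The same holds for $v_J$ and $\mathcal{R}_{k,n}$ after reading column indices modulo $n-k$.

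For $\hat{\mathcal{R}}_{k,n}$ the slice is a face: let $\rho$ be the sum of the fundamental weights of $A_{n-k}$ and set $\ell(\alpha)=\langle\rho,\ i\text{-th row of }\alpha\rangle$. Then $\ell\ge 0$ on every vertex of $\hat{\mathcal{R}}_{k,n}$ --- it is $0$ on the origin and on $e_{i',1}-e_{i',n-k+1}$ for $i'\ne i$, positive on $e_{i,1}-e_{i,n-k+1}$, and $\ge 0$ on each $\hat v_J$ since the $i$-th row of $\hat v_J$ is $0$ or a positive root --- hence $\ell\ge 0$ on $\hat{\mathcal{R}}_{k,n}$, so $\{\ell=0\}$ is a supporting hyperplane. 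Its face is the convex hull of the vertices with $\ell=0$: the origin, the $e_{i',1}-e_{i',n-k+1}$ with $i'\ne i$, and the $\hat v_J$ with $j_{i+1}=j_i+1$ --- all of which lie on the locus $\{i\text{-th row}=0\}$, so the face is contained in it; and conversely $\{i\text{-th row}=0\}\subseteq\{\ell=0\}$, whence $\hat{\mathcal{R}}_{k,n}\cap\{i\text{-th row}=0\}$ equals this face.

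To biject vertices, send $J=\{j_1<\cdots<j_k\}$ with $j_{i+1}=j_i+1$ to the set $J''\in\binom{\lbrack n-1\rbrack}{k-1}$ obtained by deleting $j_{i+1}$ and relabelling $\{1,\ldots,n\}\setminus\{j_{i+1}\}$ order-preservingly; this is a bijection onto $\binom{\lbrack n-1\rbrack}{k-1}$, a term-by-term comparison of \eqref{eqn: vertices Pk root} for the two polytopes gives $\iota_i(\hat v_{J''})=\hat v_J$, and $\iota_i$ carries the $k-2$ extra vertices $e_{i',1}-e_{i',n-k+1}$ of $\hat{\mathcal{R}}_{k-1,n-1}$ onto the $e_{i'',1}-e_{i'',n-k+1}$ with $i''\ne i$ while fixing the origin. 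The remaining point is that $J$ is nonfrozen iff $J''$ is: passing from $J''$ to $J$ expands the $i$-th element $a$ of $J''$ into the consecutive pair $\{a,a+1\}$, equivalently $J''$ is the image of $J$ under contracting the edge $\{j_i,j_{i+1}\}$ of the $n$-cycle, and a $k$-subset containing both endpoints of an edge of the $n$-cycle is a cyclic interval iff its contraction is a cyclic interval of the resulting $(n-1)$-cycle. Combining this with the previous paragraph gives $\iota_i(\hat{\mathcal{R}}_{k-1,n-1})=\hat{\mathcal{R}}_{k,n}\cap\{\alpha_{i,j}=0:j=1,\ldots,n-k\}$, one identification for each $i=1,\ldots,k-1$.

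For $\mathcal{R}_{k,n}$ I would push this identity forward along $\text{proj}_{k,n}$, using $\text{proj}_{k,n}\circ\iota_i=\iota_i\circ\text{proj}_{k-1,n-1}$, the fact that $\text{proj}_{k,n}$ carries the ``$i$-th row vanishes'' locus of $\hat{\mathcal{H}}_{k,n}$ onto that of $\mathcal{H}_{k,n}$, and $\text{proj}_{k,n}(\hat{\mathcal{R}}_{k,n})=\mathcal{R}_{k,n}$; this at once gives $\iota_i(\mathcal{R}_{k-1,n-1})\subseteq\mathcal{R}_{k,n}\cap\{\alpha_{i,j}=0\}$. The main obstacle I anticipate is the reverse inclusion: one must show that a point of $\mathcal{R}_{k,n}$ with vanishing $i$-th row lifts along $\text{proj}_{k,n}$, after subtracting a suitable nonnegative multiple of $e_{i,1}-e_{i,n-k+1}$, to a point of $\hat{\mathcal{R}}_{k,n}$ with vanishing $i$-th row, i.e.\ into the face of the second paragraph. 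Here the clean supporting-functional argument is unavailable, because in a fixed row of $\mathcal{R}_{k,n}$ the roots become cyclic (both $e_a-e_b$ and $e_b-e_a$ can occur), so this lifting has to be established by a short exchange argument among the $v_J$ --- this is the one place where the otherwise routine verification needs real care.
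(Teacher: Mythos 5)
Your approach is the same as the paper's brief sketch in spirit — locate the vertices of $\hat{\mathcal{R}}_{k,n}$ (resp.\ $\mathcal{R}_{k,n}$) lying in the slice $\{\alpha_{i,\cdot}=0\}$, namely the $\hat v_J$ (resp.\ $v_J$) with $j_{i+1}=j_i+1$, the origin, and for the hatted polytope the $e_{i',1}-e_{i',n-k+1}$ with $i'\neq i$, and then biject them with the vertex set of the smaller polytope via contracting the pair $\{j_i,j_{i+1}\}$. But you are considerably more careful, and you close a step that the paper's proof simply omits: for $\hat{\mathcal{R}}_{k,n}$ you supply the supporting-functional argument via $\ell=\langle\rho,\cdot\rangle$ on the $i$-th row, nonnegative on all vertices because the $i$-th row of each $\hat v_J$ is zero or a positive $A_{n-k}$ root, which shows the slice is a \emph{face} of $\hat{\mathcal{R}}_{k,n}$. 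This step is genuinely needed — identifying which vertices of a polytope lie in a linear subspace does not by itself determine the intersection of the polytope with that subspace, since a generic slice can have vertices that are not vertices of the original polytope. Your handling of the nonfrozen condition (via contraction of a cycle edge) and the explicit bijection $\iota_i(\hat v_{J''})=\hat v_J$ are also correct.

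You are also right to flag the $\mathcal{R}_{k,n}$ case as the genuine weak point, and the gap you identify is real. After the cyclic reduction of column indices modulo $n-k$, the $i$-th row of a vertex $v_J$ can be either a positive or a negative root of $A_{n-k-1}$, so no linear functional depending only on the $i$-th row is simultaneously nonnegative on all vertices and vanishes exactly on $\{\alpha_{i,\cdot}=0\}$; indeed, the origin is interior to $\mathcal{R}_{k,n}$, so $\{\alpha_{i,\cdot}=0\}$ cuts through its interior and the slice is not a face. Thus the face argument that settles $\hat{\mathcal{R}}_{k,n}$ does not transfer, and one must separately establish that $\mathcal{R}_{k,n}\cap\{\alpha_{i,\cdot}=0\}$ has no vertices beyond the $v_J$ with $j_{i+1}=j_i+1$. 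The paper's proof, which simply reads off the vertices and declares the other case ``exactly analogous,'' is silent here, so you have not missed something the paper supplies — you have exposed a step the paper glosses over. Your proposed route (lift along $\mathrm{proj}_{k,n}$, subtract a multiple of $e_{i,1}-e_{i,n-k+1}$, and run an exchange argument) is a plausible way to close it, as would be showing directly that the facet inequalities of $\mathcal{R}_{k-1,n-1}$ (via the duality with $\Pi_{k-1,n-1}$) are obtained by restricting facet inequalities of $\mathcal{R}_{k,n}$ to $\{\alpha_{i,\cdot}=0\}$; but neither you nor the paper actually carries this out.
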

	\begin{proof}
		The subpolytopes can be constructed explicitly.  For instance, the vertices of the polytope $\mathcal{R}_{k-1,n-1}$ in the $i^\text{th}$ embedding into $\mathcal{R}_{k,n}$, that is such that $\alpha_{i,j} = 0$ for all $j=1,\ldots, n-k$, are those vertices $v_J \in \mathcal{R}_{k,n}$ where $J\in \binom{\lbrack n\rbrack}{k}^{nf}$ has the property that $j_i+1 = j_{i+1}$.  For $\hat{\mathcal{R}}_{k,n}$ the procedure it exactly analogous.
	\end{proof}
	
	\begin{conjecture}\label{conjecture: PK volume}
		The polytope $\mathcal{R}_{k,n}$ has (relative) volume the $k$-dimensional Catalan number $\cat{k}{n-k}$. 
		
		In particular, 
		$$\text{Vol}(\mathcal{R}_{k,n}) = \frac{\cat{k}{n-k}}{((k-1)(n-k-1))!}.$$
	\end{conjecture}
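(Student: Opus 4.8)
The plan is to exhibit a regular unimodular triangulation of $\mathcal{R}_{k,n}$ whose maximal simplices are in bijection with standard Young tableaux of the $k\times(n-k)$ rectangle (equivalently: maximal chains from $\emptyset$ to that rectangle in Young's lattice, equivalently families of $k-1$ non-crossing monotone lattice paths, which is one of the combinatorial incarnations of $C^{(k)}_{n-k}$). Since in a unimodular triangulation every maximal simplex has relative volume $1$, this gives at once that the relative volume of $\mathcal{R}_{k,n}$ is $C^{(k)}_{n-k}$, and dividing by $((k-1)(n-k-1))!$ yields Conjecture \ref{conjecture: PK volume}.

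The set-up I would use is the polar duality of Remark \ref{prop: Pikn reflexive}. After the coordinate change $f_{i,j}=e_{i,j}-e_{i,j+1}$, translate $\Pi_{k,n}$ so that its unique interior lattice point (the PK point, by Proposition \ref{prop: interior lattice point Pikn}) sits at the origin; each of its $\binom{n}{k}-n$ facets is then of the form $\langle v_J,\alpha\rangle\ge -1$, so the translated $\Pi_{k,n}$ is reflexive and $\mathcal{R}_{k,n}$ is exactly its polar dual. Writing a vertex in the $f_{i,j}$-basis, $v_J=\sum_{i=1}^{k-1} f_{i,[\,a_i(J),\,b_i(J)\,]}$ with the intervals taken cyclically in $\mathbb{Z}/(n-k)$, one observes the chaining property that the interval used in row $i+1$ begins exactly where the row-$i$ interval ends; thus a vertex $v_J$ is nothing but a monotone lattice path across a $k\times(n-k)$ grid, the frozen (excluded) subsets being precisely the single-down-run paths. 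This converts the required volume computation into a statement about triangulating the "space of such paths".

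The core is the triangulation and the bijection, which I would build by induction on $k$. For the base case $k=2$, Example \ref{example: type A root polytope} identifies $\mathcal{R}_{2,n}$ with a codimension-one projection of a type $A_{n-3}$ root polytope of \cite{postnikov2009permutohedra}, whose relative volume is the ordinary Catalan number $C_{n-2}=C^{(2)}_{n-2}$ via the standard non-crossing-tree triangulation. For the inductive step I would use the $k-1$ embeddings $\hat{\mathcal{R}}_{k-1,n-1}\hookrightarrow \hat{\mathcal{R}}_{k,n}$ of Proposition \ref{prop: graded embeddings root polytopes}: conditioning on the row-$1$ interval of $J$ (equivalently, on the first run of the associated lattice path) stratifies $\mathcal{R}_{k,n}$ into pieces, each of which is, affinely and lattice-preservingly, a join of a simplex in the row-$1$ coordinates with an $\mathcal{R}_{k-1,n-1}$-type polytope in the remaining rows; triangulating each piece by the product of a fixed triangulation of the simplex with the inductively-constructed triangulation of $\mathcal{R}_{k-1,n-1}$ glues to a global regular triangulation of $\mathcal{R}_{k,n}$, and unwinding indices shows that its maximal simplices are exactly the standard fillings of the $k\times(n-k)$ rectangle obtained by prepending the row-$1$ run to a filling of the $(k-1)\times(n-k-1)$-type shape. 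Unimodularity is then verified by checking that, in the $f_{i,j}$-basis, the edge vectors of each maximal simplex form a $\mathbb{Z}$-basis of $\mathcal{H}_{k,n}\cap\mathbb{Z}^{(k-1)(n-k)}$; the chaining structure makes the relevant incidence matrices unitriangular once a consistent "reference" path is fixed in each stratum.

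The hard part will be the cyclic wrap-around $\bmod\,(n-k)$ in the definition of $v_J$: over $\mathbb{Z}/(n-k)$ the row-$i$ contributions are roots of an \emph{affine} type-$A$ system rather than a finite one, so the classical triangulations and unimodularity arguments for type $A$ root polytopes do not transfer verbatim, and the triangulation must be pinned so that exactly one "wrap" is broken per row — precisely the choice that forces the rigid rectangular count $C^{(k)}_{n-k}$ rather than a necklace/cyclically-symmetric count. A weaker but still informative route, reducing Conjecture \ref{conjecture: PK volume} to the amplitude conjecture \eqref{eq:planar kinematics amplitude intro}, is via stringy canonical forms: the Claim in Section \ref{sec: Pikn} identifies $\Pi_{k,n}$ with the Newton polytope of the planar-kinematics Laurent polynomial, so by the machinery of \cite{Arkani-Hamed:2019mrd} the $\alpha'\to 0$ leading term of the corresponding stringy integral, evaluated at the PK point, is simultaneously $m^{(k)}(\mathbb{I}_n,\mathbb{I}_n)$ there and the normalized volume of the dual polytope $\mathcal{R}_{k,n}$; combined with \eqref{eq:planar kinematics amplitude intro} this gives $\mathrm{Vol}(\mathcal{R}_{k,n})=C^{(k)}_{n-k}/((k-1)(n-k-1))!$. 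One could alternatively attempt to realize $\mathcal{R}_{k,n}$ as a volume-preserving section of the $k(n-k)$-dimensional superpotential polytope $\Gamma_G$ of \cite{rietsch2019newton}, which is already known to have normalized volume $C^{(k)}_{n-k}$.
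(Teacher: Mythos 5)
The statement you are treating is Conjecture \ref{conjecture: PK volume}, which the paper does \emph{not} prove: it is supported only by explicit SageMath volume computations (for $\mathcal{R}_{3,n}$ and $\mathcal{R}_{4,n}$ with $n\le 9$) and by the remark in the Discussions section that a regular unimodular triangulation into $\cat{k}{n-k}$ simplices ``will prove'' the relative volume conjecture, with a pointer to the forthcoming work \cite{CE2020}. So there is no proof in the paper against which to compare your proposal; your primary route --- a regular unimodular triangulation whose maximal simplices biject with standard Young tableaux of the $k\times(n-k)$ rectangle --- is essentially the strategy the paper itself flags as the natural one, but neither the paper nor your sketch carries it out.

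Within the sketch there are several substantive gaps. First, the $k=2$ base case is not as immediate as stated: $\mathcal{R}_{2,n}$ is a codimension-one \emph{projection} of the type $A$ positive root polytope of \cite{postnikov2009permutohedra}, and projections do not in general preserve normalized volume; the coincidence that the normalized volume of $\mathcal{R}_{2,n}$ still equals $C_{n-2}$ (which one can check by hand for $\mathcal{R}_{2,5}$) needs a separate argument, e.g.\ by exhibiting a lattice-preserving affine section, or by pushing a known triangulation of $\hat{\mathcal{R}}_{2,n}$ through the projection and verifying it remains a unimodular triangulation. Second, the ``chaining property'' and the identification of the vertices $v_J$ of Equation \eqref{eqn: vertices Pk root} with monotone lattice paths, with frozen subsets corresponding to single-run paths, are plausible but unverified; this reading has to be extracted carefully from the $f_{i,j}$-change of basis, especially because the indices in $v_J$ are reduced modulo $n-k$. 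Third, the inductive step --- conditioning on the row-1 run to exhibit $\mathcal{R}_{k,n}$ as glued joins of a simplex with $\mathcal{R}_{k-1,n-1}$-type pieces --- is asserted, not proven: you would need to show that each stratum is lattice-affinely a join, that the strata glue into a genuine polyhedral subdivision, that the result is regular, and that the simplex edge vectors form $\mathbb{Z}$-bases of the relevant lattice; none of these is automatic. Fourth, you correctly single out the cyclic wrap-around modulo $n-k$ as the essential obstruction --- this is exactly where the classical finite type-$A$ unimodularity arguments do not transfer --- but the proposal offers only the heuristic of ``pinning so that exactly one wrap is broken per row,'' not a construction. Finally, the two alternative routes you offer are not independent proofs: the stringy-canonical-form argument via the Claim in Section \ref{sec: Pikn} and \cite{Arkani-Hamed:2019mrd} would reduce Conjecture \ref{conjecture: PK volume} to the amplitude conjecture \eqref{eq:planar kinematics amplitude intro}, which is itself only verified computationally in the paper; and realizing $\mathcal{R}_{k,n}$ as a volume-preserving section of $\Gamma_G$ from \cite{rietsch2019newton} is something you merely propose ``one could attempt.'' In short, this is a reasonable roadmap aligned with the paper's own suggested strategy, but it is a program, not a proof, and the three technical points above (base-case projection, lattice-join structure in the induction, and the cyclic wrap-around) are exactly where a real argument would have to land.
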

	A combinatorial proof of Conjecture \ref{conjecture: PK volume} would be very interesting, see \cite{CE2020}.
	
	\begin{rem}
		Finally we observe that a similar construction to that used for Proposition \ref{prop: PK point} shows that $\mathcal{R}_{k,n}$ can be embedded in the kinematic space as a minimal polytopal neighorhood of the PK point.
	\end{rem}
	
	\subsection{Conical Kinematics: Roots and Weights}
	
	In this section, we introduce conical kinematics, which provides a generalization of the construction for k=2 in \cite{Early:2018zuw}, which in particular gives a different value for $m^{(k)}(\mathbb{I}_n,\mathbb{I}_n)$ for $k\ge 3$ from the minimal kinematics in \cite{MKandPK}.
	
	For any $J\in \binom{\lbrack n\rbrack}{k}^{nf}$, define the linear functional
	\begin{eqnarray}\label{eq: gamma def}
		\gamma_J = \sum_{\ell=1}^{k-1}(\alpha_{\ell,j_{\ell}-(\ell-1)} - \alpha_{\ell,j_{\ell+1}-(\ell-1)-1}),
	\end{eqnarray}
	where the indices satisfy $(i,j)\in \lbrack 1,k-1\rbrack \times \lbrack 1,n-k+1\rbrack$.
	
	\begin{conjecture}\label{conjecture: conical kinematics amp}
		For any $\alpha\in \mathbb{R}^{(k-1)(n-k+1)}$, solve the equations $\eta_J(s) = \gamma_J$ for the coordinate functions $s$ on $\mathcal{K}(k,n)$.  Then the scattering equations possess a unique solution, and we have 
		$$m^{(k)}(\mathbb{I}_n,\mathbb{I}_n)\big\vert_{\eta_J = \gamma_J} = \prod_{i=1}^{k-1} \left(\frac{\alpha_{i,1} - \alpha_{i,n-k+1}}{\prod_{j=1}^{n-k}\left(\alpha_{i,j} - \alpha_{i,j+1}\right)}\right).$$
	\end{conjecture}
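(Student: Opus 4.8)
The plan is to reduce the statement to the $k=2$ case, which is essentially the content of \cite{Early:2018zuw}, by exhibiting a \emph{row-by-row decoupling} of the conical scattering problem in the web parameterization of Section \ref{sec:tropGrass Eval}. The starting observation is that the map $s\mapsto (\eta_J(s))_{J\in\binom{\lbrack n\rbrack}{k}^{nf}}$ is a linear isomorphism from $\mathcal{K}(k,n)$ onto $\mathbb{R}^{\binom{n}{k}-n}$ (Proposition \ref{prop:planar basis}), and that by \eqref{eq: gamma def} the linear forms $\gamma_J$ split as $\gamma_J=\sum_{\ell=1}^{k-1}\gamma_J^{(\ell)}$, where $\gamma_J^{(\ell)}=\alpha_{\ell,j_\ell-(\ell-1)}-\alpha_{\ell,j_{\ell+1}-(\ell-1)-1}$ depends only on the $\ell$-th row of $\alpha$. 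Hence the solution $s=s(\gamma)$ of $\eta_J(s)=\gamma_J$ decomposes linearly as $s(\gamma)=\sum_{\ell=1}^{k-1}s^{(\ell)}$ with $\eta_J(s^{(\ell)})=\gamma_J^{(\ell)}$, and therefore the CEGM potential decomposes as $\mathcal{S}_{k,n}[\gamma]=\sum_{\ell=1}^{k-1}\mathcal{S}^{(\ell)}$, $\mathcal{S}^{(\ell)}=\sum_I s^{(\ell)}_I\log p_I$.

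The crux — and the step I expect to be hardest — is to prove that, in the web parameterization, each $\mathcal{S}^{(\ell)}$ depends only on the $\ell$-th row of variables $x_{\ell,1},\ldots,x_{\ell,n-k}$, and in fact equals the web-parameterized CEGM potential of the $k=2$, $(n-k+2)$-point problem at the conical point with parameters $(\alpha_{\ell,1},\ldots,\alpha_{\ell,n-k+1})$. I would establish this by proving a closed ``master formula'' for $\mathcal{S}_{k,n}[\gamma]$ in web coordinates, analogous to \eqref{PK potential general form web}; on dimensional and structural grounds I expect it to have the shape
$$\mathcal{S}_{k,n}[\gamma]=\sum_{\ell=1}^{k-1}\left(\sum_{j}(\alpha_{\ell,j}-\alpha_{\ell,j+1})\log R_{\ell,j}(x_{\ell,\bullet})-\sum_{j}c_{\ell,j}\log x_{\ell,j}\right),$$
with $R_{\ell,j}$ built from partial sums of the $\ell$-th row and $c_{\ell,j}$ integers, the cross-row ``$Q_j$-type'' terms that obstruct decoupling at the PK point now cancelling. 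The master formula would be checked by the same bookkeeping as \eqref{PK potential general form web}: evaluate $\eta_J$ on the candidate $s$, match it with $\gamma_J$ for every nonfrozen $J$ using Proposition \ref{prop:planar basis}, and verify the frozen $\eta_J$ vanish; the only substantive point is recognizing the $k=2$ Plücker coordinates of the $\ell$-th-row submatrix, up to monomials in the remaining $x$'s, as $k\times k$ minors of the full web matrix. I would cross-check it symbolically for $(k,n)=(3,6),(3,7),(4,8)$.

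Granting the decoupling, the conclusion is formal. The equations $\partial_{x_{i,j}}\mathcal{S}_{k,n}[\gamma]=0$ split into $k-1$ independent systems, one per row, each being exactly the $k=2$, $(n-k+2)$-point conical scattering system, so by the $k=2$ case each has a unique solution and $\mathcal{S}_{k,n}[\gamma]$ has a unique critical point, the product of the row-wise ones. On it the Hessian is block diagonal with blocks the $k=2$ Hessians; after checking that the reduced-determinant prescriptions are compatible (the residual per-row rescalings decouple once one sets $x_{\ell,1}=1$) one obtains ${\rm det}'\Psi^{(k)}=\prod_{\ell=1}^{k-1}{\rm det}'\Psi^{(2),(\ell)}$ up to the universal normalization constant. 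Likewise each consecutive minor $p_{i,i+1,\ldots,i+k-1}$ factors as a product of monomials, one per row (as already visible in \eqref{eq:tropPlucker3}), so ${\rm PT}(1,\ldots,n)^2=\prod_{\ell=1}^{k-1}({\rm PT}^{(\ell)})^2$ up to a monomial absorbed into $\prod_\ell {\rm det}'\Psi^{(2),(\ell)}$. Assembling these factorizations yields $m^{(k)}(\mathbb{I}_n,\mathbb{I}_n)\big\vert_{\eta_J=\gamma_J}=\prod_{\ell=1}^{k-1}m^{(2)}_{n-k+2}(\mathbb{I},\mathbb{I})\big\vert_{\text{conical},\,\alpha_{\ell,\bullet}}$, and the $k=2$ conical evaluation (which itself follows, if needed, from the telescoping identity $\sum_{T}\prod_{\{a,b\}\in T}(\alpha_a-\alpha_{b-1})^{-1}=\frac{\alpha_1-\alpha_{N-1}}{\prod_j(\alpha_j-\alpha_{j+1})}$ over triangulations $T$ of an $N$-gon) gives the stated product.
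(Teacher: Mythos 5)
First, a point of orientation: this statement is labelled as a \emph{Conjecture} in the paper (Conjecture~\ref{conjecture: conical kinematics amp}), and the paper supplies no proof of it; so there is no ``paper's own proof'' to compare against, and, were your argument complete, it would be a genuine advance on the text rather than a reconstruction.

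That said, your plan, while sensible in outline, has a real gap at exactly the step you flag as the crux, and the way you have set things up does not make it clear that the gap can be closed. Your reduction hinges on showing that in web coordinates the potential $\mathcal{S}_{k,n}[\gamma]$, with $\eta_J$ pinned to the separable forms $\gamma_J=\sum_\ell\gamma_J^{(\ell)}$, decomposes as $\sum_\ell \mathcal{S}^{(\ell)}$ with each $\mathcal{S}^{(\ell)}$ a function of the row-$\ell$ web variables only. This is not a formal consequence of linearity: the individual $\log p_I$ depend on \emph{all} rows of web variables (e.g.\ from Equations~\eqref{eq:tropPlucker3}--\eqref{eq:tropPlucker4}, $p_{4,5,6}=-x_1 x_2 y_2$ and $p_{3,4,6}=-(x_1y_1+x_2y_1+x_2y_2)$), so the asserted row-locality requires large coordinated cancellations across the Pl\"ucker coordinates. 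The precedent you invoke, Equation~\eqref{PK potential general form web}, points the \emph{other} way: at the PK point the $Q_j$ factors genuinely couple rows and no decoupling occurs. Moreover, PK is not on the conical locus at all --- for $(k,n)=(3,6)$ one has $\gamma_{\{1,3,4\}}=\alpha_{1,1}-\alpha_{1,2}$, $\gamma_{\{2,3,5\}}=\alpha_{2,2}-\alpha_{2,3}$, and $\gamma_{\{1,3,5\}}=\gamma_{\{1,3,4\}}+\gamma_{\{2,3,5\}}$, so $\gamma_J\equiv 1$ is inconsistent --- hence the cancellations you need are neither implied nor contradicted by the PK computation and must be established from scratch. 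Writing that you ``would establish this by proving a closed master formula'' and ``would cross-check it symbolically'' describes the work that would constitute the proof; it is not the proof.

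There is also a secondary issue: even granting the decoupling of the potential, you must identify each per-row subsystem with the $k=2$, $(n-k+2)$-point conical problem \emph{both} at the level of the potential (so the unique-solution claim transfers) \emph{and} at the level of the CEGM measure. The block-diagonality of the Hessian and the factorization of $\mathrm{PT}^2$ are each asserted in a sentence, but for $k\ge 3$ the reduced determinant is defined only modulo a choice of frame (Section~\ref{sec: Evaluating CEGM Biadjoint Amplitudes}); that the residual per-row rescalings really disentangle, with the stated normalization, is exactly the kind of bookkeeping that hides sign and prefactor obstructions and should be done explicitly or checked end to end on a nontrivial example. In short, the strategy is a plausible guess as to \emph{why} Conjecture~\ref{conjecture: conical kinematics amp} should hold, and the final $k=2$ telescoping step is fine, but as written this is a program, not a proof: the central row-decoupling lemma is the entire content and it is not established.
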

	
	There is a second version of conical kinematics which differs from the above by a coordinate transformation.
	
	For any $J\in \binom{\lbrack n\rbrack}{k}^{nf}$, define the linear functional
	\begin{eqnarray}\label{eq: gamma def}
		\gamma'_J =\sum_{i=1}^{k-1}\alpha_{i,\lbrack j_i,j_{i+1}-i-1\rbrack},
	\end{eqnarray}
	Thus, Conjecture \ref{conjecture: conical kinematics amp} has the following equivalent formulation in the second set of variables.
	\begin{conjecture}
		For any $\alpha\in \mathbb{R}^{(k-1)(n-k)}$, solve the equations $\eta_J(s) = \gamma'_J$ for the coordinate functions $s$ on $\mathcal{K}(k,n)$.  Then the scattering equations possess a unique solution, and we have 
		$$m^{(k)}(\mathbb{I}_n,\mathbb{I}_n)\big\vert_{\eta_J = \gamma'_J} = \prod_{i=1}^{k-1}\left( \frac{\sum_{j=1}^{n-k}\alpha_{i,j}}{\prod_{j=1}^{n-k}\alpha_{i,j}}\right).$$
	\end{conjecture}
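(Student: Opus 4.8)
The plan is to reduce the rank-$k$ problem, row by row, to the rank-two case.  As already observed, the statement is equivalent under a linear change of the $\alpha$-variables to Conjecture \ref{conjecture: conical kinematics amp}, so it suffices to prove either one; I work with the $\gamma'_J$-form since its linear functionals are precisely those cutting out $\Pi_{k,n}$.  The guiding expectation is that conical kinematics makes the whole $\mathbb{CP}^{k-1}$ scattering problem \emph{decouple} into $k-1$ independent rank-two problems, one for each row $i=1,\ldots, k-1$ of the web parameterization, and that the $i$-th of these is exactly the $k=2$ ``minimal kinematics'' computation of \cite{Early:2018zuw}, whose unique solution contributes $\frac{\sum_j \alpha_{i,j}}{\prod_j \alpha_{i,j}}$.

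\textbf{Step 1: factorization of the potential.}  Writing a configuration in the web parameterization with variables $x_{i,j}$, $(i,j)\in \lbrack 1,k-1\rbrack \times \lbrack 1,n-k\rbrack$, I would first establish the analogue, for general kinematics, of the Claim around Equation \eqref{PK minors web}: each nonfrozen minor $p_J$ is a monomial in the $x_{i,j}$ times a polynomial, with the monomial exponents read off from $J$ in the same way as the height vectors $\mathfrak{h}_J$ and the linear forms $\gamma'_J$.  Re-expanding $\mathcal{S}_{k,n}=\sum_J \s_J \log p_J$ in the planar basis yields $\mathcal{S}_{k,n}=\sum_{J\in\binom{\lbrack n\rbrack}{k}^{nf}} \eta_J(s)\, g_J$, where the $g_J$ are the generalized-Feynman-diagram log-coordinates whose sum in the PK case is the $\mathcal{S}^{(PK)}_{k,n}$ of Equation \eqref{PK potential general form web}.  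Setting $\eta_J(s)=\gamma'_J$ and using that $\gamma'_J = \sum_{i=1}^{k-1}\alpha_{i,\lbrack j_i, j_{i+1}-i-1\rbrack}$ is a sum of terms each involving only a single row, the cross-row contributions --- in particular the factors $\log Q_j(\mathbf{x})$ of Equation \eqref{PK potential general form web}, which couple consecutive rows --- should enter with total coefficient zero, leaving
\[
\mathcal{S}_{k,n}\big|_{\eta_J = \gamma'_J} = \sum_{i=1}^{k-1}\left[\Big(\textstyle\sum_{j=1}^{n-k}\alpha_{i,j}\Big)\log\Big(\textstyle\sum_{j=1}^{n-k}x_{i,j}\Big) - \sum_{j=1}^{n-k}\alpha_{i,j}\log x_{i,j}\right].
\]

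\textbf{Steps 2--3: unique solution and evaluation.}  Because the potential of Step 1 is a sum $\sum_i f_i$ with $f_i$ depending only on the $i$-th row, the scattering equations decouple.  For fixed $i$, the equation $\partial_{x_{i,j}} f_i = 0$ gives $\alpha_{i,j}/x_{i,j} = (\sum_l \alpha_{i,l})/(\sum_l x_{i,l})$ independently of $j$, hence $x_{i,j} = \lambda_i\, \alpha_{i,j}$ with $\lambda_i$ free; the $\lambda_i$ are exactly the residual torus scalings, so after gauge fixing there is a \emph{unique} critical point $x_{i,j} = \alpha_{i,j}$, proving the first assertion, and for generic $\alpha$ it lies in $X(k,n)$ since by Step 1 each $p_J$ evaluates to a nonvanishing product of $\alpha$-linear forms.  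On this solution the Hessian of $\mathcal{S}_{k,n}$ is block diagonal with one $(n-k)$-dimensional block per row, and the squared Parke--Taylor factor splits through the rows as well; hence, modulo the standard $SL(k)$ reduced-determinant bookkeeping of \eqref{CEGM def} (matched against the product of the $k=2$ normalizations), $m^{(k)}(\mathbb{I}_n,\mathbb{I}_n)\big|_{\eta_J = \gamma'_J} = \prod_{i=1}^{k-1} m^{(2)}_i$, each $m^{(2)}_i$ being the one-solution rank-two amplitude of \cite{Early:2018zuw} for $f_i$, which equals $\frac{\sum_j\alpha_{i,j}}{\prod_j\alpha_{i,j}}$.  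An alternative to this last step, bypassing the reduced determinant, is to substitute the factorization of Step 1 into the Laplace/tropical representation \eqref{eq:tropical integral}: the exponent becomes a sum over rows, the integral over $\mathbb{R}^{k(n-k)}$ splits into a product of $k-1$ copies of the rank-two integral \eqref{monra} with $\alpha$-deformed exponents, and each copy evaluates to $\frac{\sum_j\alpha_{i,j}}{\prod_j\alpha_{i,j}}$ by a direct Beta-type computation.

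\textbf{Main obstacle.}  The substantive step is Step 1: one needs a uniform closed form for the minors $p_J$ in the web parameterization for \emph{all} numbers of cyclic intervals in $J$ (not only the one- and two-interval cases recorded in Section \ref{sec:tropGrass Eval}), together with a clean argument that re-expanding $\sum_J \s_J \log p_J$ in the planar basis and imposing $\eta_J = \gamma'_J$ kills every cross-row term --- equivalently, that the row-block structure of the functionals $\gamma'_J$ propagates to the full potential.  Once this factorization is available, Steps 2--3 are essentially formal, the remaining care being the normalization constants and generalized Vandermonde prefactors in the degenerate block-diagonal gauge; the combinatorial route suggested in \cite{CE2020} may offer an orthogonal attack.
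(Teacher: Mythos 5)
This statement is Conjecture \ref{conjecture: conical kinematics amp}'s equivalent reformulation; the paper presents \emph{no proof} of it (it is explicitly labeled a conjecture, with the comment that a combinatorial proof ``would be very interesting''), so there is no paper argument to compare against. Your proposal is therefore a new attempt, not a reconstruction, and should be judged as such.

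The strategy — reduce to $k-1$ decoupled rank-two problems via a row-wise factorization of the potential — is a plausible heuristic consistent with the conjectured product form, but it has a genuine, and in my view decisive, gap at Step~1.  The expansion $\mathcal{S}_{k,n}=\sum_{J\in\binom{\lbrack n\rbrack}{k}^{nf}} \eta_J(s)\, g_J(x)$ is formally fine, since by Proposition \ref{prop:planar basis} the $\eta_J$ form a dual basis of $\mathcal{K}(k,n)^\ast$; and substituting $\eta_J=\gamma'_J(\alpha)=\sum_{i}\alpha_{i,\lbrack j_i,j_{i+1}-i-1\rbrack}$ does split the sum into $k-1$ pieces indexed by rows.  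But the $i$-th piece is $\sum_{J}\alpha_{i,\lbrack j_i,j_{i+1}-i-1\rbrack}\, g_J(x)$, and each $g_J(x)$ a priori depends on \emph{all} of the web variables $x_{i',j'}$ through the cross-row polynomials $Q_j$ of Equation \eqref{PK minors web}.  Your claim that the $\alpha$-coefficients conspire so that every $\log Q_j$ enters with total coefficient zero is the entire content of Step~1, and it is asserted (``should enter with total coefficient zero'') rather than demonstrated; you do not verify it even for $(k,n)=(3,6)$, where it would be a finite linear-algebra check on the 14 nonfrozen $\eta_J$.  Without this, nothing downstream in the proposal is established — in particular, uniqueness of the critical point (the first half of the conjecture) is itself contingent on the decoupling.

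A secondary gap is in Step~3: even granting the block-diagonal Hessian, you cannot simply assert that the reduced determinant and the squared Parke--Taylor factor split row by row.  The CEGM reduced determinant \eqref{CEGM def} requires a choice of frame and involves the generalized Vandermonde \eqref{general Vander}; one must actually compute this prefactor on the proposed critical point $x_{i,j}=\alpha_{i,j}$ and show that it equals the product of the $k-1$ rank-two normalizations with the correct constant $\mathcal{N}$.  This is a distinct calculation, not a formal consequence of block structure, and it is the sort of bookkeeping that routinely introduces extra $\alpha$-dependent factors.  In summary: your plan identifies the right place to start (and, laudably, names its own weak point), but as it stands it is a road map, not a proof; the missing content is precisely the parts you flag as ``the substantive step'' and the ``bookkeeping.''
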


	\section{Discussions}
	
	In this paper we are seeing that the planar kinematic point and its surrounding polytopal neighborhood $\Pi_{k,n}$ on the integer lattice, lie at the core of a seemingly vast network of connections and novel structures, between the CHY formulation of the biadjoint cubic scalar theory and its CEGM generalization, and mirror symmetry, tropical geometry, integrable systems, enumerative combinatorics and lattice polytopes.
	
	We found that the set of critical points of the planar kinematics potential function can be identified with equivalence classes of certain critical points of the superpotential defined in   \cite{eguchi1997gravitational} and more recently in \cite{marsh2020b}.  We have initiated the study of a highly structured lattice polytope $\Pi_{k,n}$ that surrounds the planar kinematic point, where $\Pi_{2,n}$ is a degeneration of the associahedron; we have studied its dual polytope, the rank-graded root polytope $\mathcal{R}_{k,n}$, where $\mathcal{R}_{2,n}$ is a projection of the type $A_{n-2}$ root polytope.  We have checked to nontrivial values of $(k,n)$ that the volume of $\mathcal{R}_{k,n}$ is the multi-dimensional Catalan number modulo a normalization constant, 
	$$\text{Vol}(\mathcal{R}_{k,n}) = \frac{C^{(k)}_{n-k}}{((k-1)(n-k-1))!}.$$
	
	We have reformulated the CEGM generalization of the cubic scalar theory as a single integral, with a tropical integrand, which can be evaluated explicitly, as opposed to its formulation as a sum over generalized Feynman diagrams (GFD). This integral can be interpreted as a Laplace transform of the whole ${\rm Trop}+G(k,n)$ where the dual space is the space of kinematic invariants. This is a significant advance, since now (in theory) one can evaluate $m^{(k)}_n$ without the computational task of constructing the arrays of Feynman diagrams with compatible metrics. Instead, the internal lengths of the diagrams in the GFD have been``glued" together to form a $(k-1)(n-k-1)$-dimensional space. Our main motivation for inotrducing this object was to use it as a tool to explore possible combinatorial structures hidden in (resummations of) GFDs which account for the appearance of k-dimensional Catalan numbers as the value of CEGM amplitudes at the PK point. However, it is clear that even for general kinematics the integral formula could have many applications. For example in the study of soft theorems \cite{Sepulveda:2019vrz,Cachazo:2019ble,Abhishek:2020xfy}. We leave the study of this fascinating object for future research.
	
	We have given a suggestive combinatorial interpretation for the domains of linearity for the tropicalized (exponentiated) planar kinematics potential function $\mathcal{S}_{2,n}$ which, dually, provides a combinatorial interpretation for the vertices of the planar kinematics associahedron, that is, as the facets of the root polytope $\mathcal{R}_{2,n}$.  It would be very interesting to extend this construction to $k\ge 3$.  This interpretation suggests the possibility that planar kinematics could be extended from a single point to a some bigger subset of the kinematic space; in fact we have seen exactly this in Corollary \ref{prop: PK point}, which provides a minimal polytopal neighborhood $\Pi_{k,n}$ of the planar kinematics point on the integer lattice.
	
	We have constructed rank-graded root polytopes $\mathcal{R}_{k,n}$; it is reasonable to expect that these possess a regular, unimodular triangulation into $\cat{k}{n-k}$ simplices; that will prove that the  relative volume conjecture for $\mathcal{R}_{k,n}$.  This suggests the natural possibility of an amplitude wherein the Feynman diagrams, that is maximal collections of compatible poles, are in bijection with the simplices in the unimodular triangulation of $\mathcal{R}_{k,n}$.  For this and other combinatorial structures associated to the polytopes $\mathcal{R}_{k,n}$ and $\Pi_{k,n}$, see \cite{CE2020}.
	
	%%%%%%%%%%%%%%%%%%%%%%%%
	
	Let us conclude some forward-looking and speculative issues raised through our work.
	
	In \cite{Early:2019zyi}, certain arrangements of the blade $((1,2,\ldots, n))$ on the vertices of hypersimplices $\Delta_{k,n}$ were shown to induce certain coarsest matroidal subdivisions and to provide a combinatorial criterion for compatibility of positroidal multi-splits; integrating integer translations $v$ of the blade $((1,2,\ldots, n))$ gives rise to a family of tropical polynomials $\rho_v$ on an integer lattice which satisfy a continuous analog of the octahedron recurrence (see Proposition \ref{prop: octahedron recurrence}); the vector of heights over the vertices $e_J\in \Delta_{k,n}$ defines a functional on the kinematic space $\mathcal{K}(k,n)$ and induces a planar basis element (\cite{Early:2019eun}), as used in this paper to characterize the PK point and a minimal polytopal neighborhood of it via an embedding $\Pi_{k,n}\hookrightarrow \mathcal{K}(k,n)$.
	
	On the other hand, in \cite{Borges:2019csl}, see also  \cite{Cachazo:2019xjx} and \cite{Guevara:2020lek}, Generalized Feynman Diagrams were introduced and used as a powerful combinatorial device for the calculation of maximal collections of compatible poles of $m^{(k)}(\mathbb{I}_n,\mathbb{I}_n)$.  
	
	Now one could consider blade arrangements on more general classes of lattice polytopes as well and it is reasonable to expect that corresponding notions of Generalized Feynman Diagrams would provide the characterization of maximal collections of compatible poles.  Put another way, what are the admissible subdivisions of polytopes, such as the type $A_3$ root \textit{solid} in Figure \ref{fig:incompatiblesplits}, that would be detected by Generalized Feynman Diagrams to induce maximal collections of compatible poles of an amplitude?
	\begin{figure}[h!]
		\centering
		\includegraphics[width=0.6\linewidth]{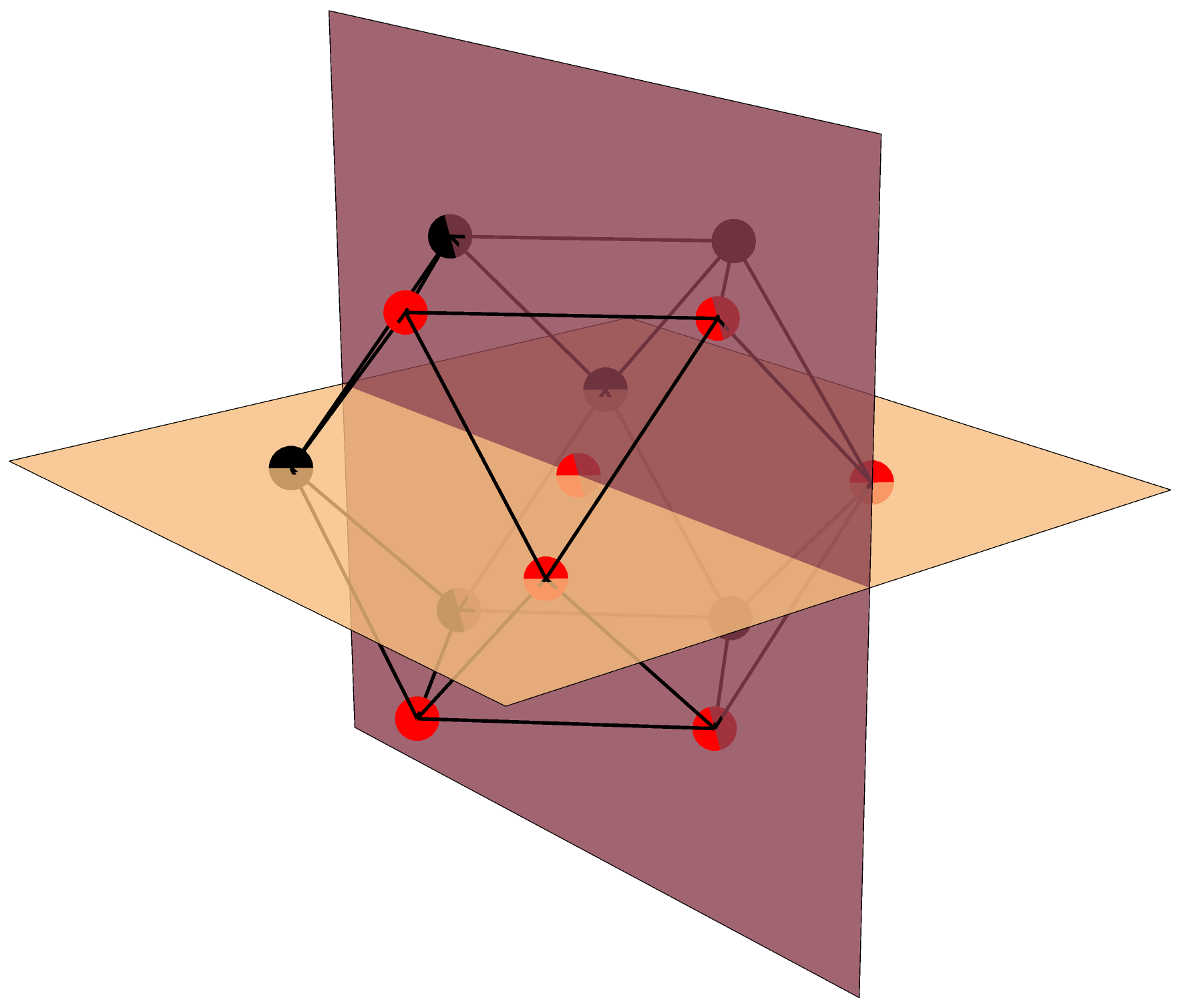}
		\caption{Incompatible 2-splits of the type $A_3$ root \textit{solid}.  The root \textit{polytope} $\hat{\mathcal{R}}_{2,5}$ is the convex hull of the red vertices.}
		\label{fig:incompatiblesplits}
	\end{figure}
	In Figure \ref{fig:incompatiblesplits}, the root \textit{polytope} $\hat{\mathcal{R}}_{2,5}$ is the convex hull of the red vertices, and by projecting along the diameter through the red vertex $e_1-e_4$ and its opposite black vertex $-(e_1-e_4)$ we obtain the root polytope $\mathcal{R}_{2,5}$.   There, the pair of orthogonal hyperplanes correspond locally to the blade arrangement $\{((1,2,3,4))_{e_{13}}, ((1,2,3,4))_{e_{24}}\}$.  The root polytopes $\hat{\mathcal{R}}_{k,n}$ and $\mathcal{R}_{k,n}$ introduced in Section \ref{sec:PK polytope} provide a rank-graded higher analog of the type $A$ root polytope, in the sense that $\mathcal{R}_{k,n}$ contains all lower-order polytopes $\mathcal{R}_{k-j,n-j}$ as faces, see Proposition \ref{prop: graded embeddings root polytopes}.  A first-order question is to investigate what $\mathcal{R}_{k,n}$, and $\Pi_{k,n}$ and its deformations, can tell us about the generalized biadjoint scalar $m^{(k)}(\mathbb{I}_n,\mathbb{I}_n)$.
	
	For a perhaps more ambitious question, let us first take a slight detour to discuss the classical Steinmann relations: we shall approach Figure \ref{fig:incompatiblesplits} from a new direction.

	The classical Steinmann relations, which provide a condition on what are the possible discontinuities of generalized retarded Green's functions, lie at the foundation of axiomatic QFT and an incarnation has appeared more recently in the study of planar $\mathcal{N}=4$ SYM \cite{Caron-Huot:2016owq}.  Very recently they have appeared in the context of combinatorial species, see \cite{Norledge:2020veg}.

	The classical Steinmann relations (see for instance the review article \cite{Streater:1975vw} and the references therein) can be formulated as a constraint on which are the pairwise compatible hyperplanes that cut through the center of the type A root solid, formed as the convex hull of all roots $e_i-e_j$ for $i\not=j$, as follows: for proper nonempty subsets $I,J$ of $\lbrack n\rbrack = \{1,\ldots, n\}$, a pair of hyperplanes
	$$x_I = 0,\ \ x_J = 0$$
	corresponds to an admissible discontinuity of a Green's function provided that at least one of the following four intersections is empty:
	\begin{eqnarray}\label{eq:Steinmann}
		I\cap J,\ \ I\cap J^c,\ \ I^c\cap J,\ \ I^c\cap J^c.
	\end{eqnarray}
	It is natural to view these conditions as a pointwise analog of the condition for pairwise compatibility relation of (2-split) matroid subdivisions of the hypersimplex $\Delta_{2,n}$.
	
	Returning to the root solid, the Steinman relations in Equation \eqref{eq:Steinmann} constitute a compatibility criterion for the common refinement of a pair of 2-splits of the type $A_{n-2}$ root solid, of which the root polytope $\hat{\mathcal{R}}_{2,n}$ is a subpolytope.
	
	Therefore one would like to ask for analogous compatibility relations for splits of the root polytopes $\hat{\mathcal{R}}_{k,n}$ and $\mathcal{R}_{k,n}$ and related root solids.
	
	Techniques developed to work with positroidal subdivisions for Generalized Feynman Diagrams \cite{Borges:2019csl} and \cite{Cachazo:2019xjx}, matroidal blade arrangements in \cite{Early:2019eun,Early:2020hap}, and their polymatroidal generalizations defined here, could provide an approach.
	
	\section*{Acknowledgements}
	
	We would like to thank W. Norledge, J. Scott, J. Tevelev for useful discussions. This research was supported in part by a grant from the Gluskin Sheff/Onex Freeman Dyson Chair in Theoretical Physics and by Perimeter Institute. Research at Perimeter Institute is supported in part by the Government of Canada through the Department of Innovation, Science and Economic Development Canada and by the Province of Ontario through the Ministry of Colleges and Universities.

	\appendix
	\section{Global Schwinger Formula for $m^{(k)}_n$}\label{sec: Global Schwinger}
	In this Appendix, we give details for the Global Schwinger Parametrization for CEGM generalized amplitudes $m^{(k)}_n$, introduced in Section \ref{sec:tropGrass Eval}.
	
	Here we introduce a particular\footnote{For related constructions, see \cite{speyer2005tropical} and the references therein.} positive parametrization of
	$$X^0(k,n) = \left\{g\in G(k,n): \prod_{J}\Delta_J(g)\not=0 \right\} \slash (\mathbb{C}^\ast)^n.$$
	For each $(a,b) \in \lbrack 1,k-1\rbrack \times \lbrack 1,n-k\rbrack$, define a polynomial
	$$M_{a,b} = \sum_{1\le j_a\le j_{a+1}\le \cdots \le j_{k-1}\le b}\left(\prod_{t=a}^{k-1}x_{t,j_t} \right).$$
	% $$M_{a,b} = \sum_{1\le j_1\le j_2\le \cdots \le j_{k-1}\le b}\left(\prod_{t=a}^{k-1}x_{t,j_t} \right).$$
	We fix the first $k$ columns of a matrix $M(x)$ to be the identity and the column k+1 to be monomials $M_{i,1} = x_{i,1}x_{i+1,1}\cdots x_{k-1,1}$ which are usually set to 1 to give the standard projective frame.
	$$M(x) = \begin{bmatrix}
		1 & 0 & \cdots  & &  0  & M_{1,1} & \cdots  & M_{1,n-k} \\
		0 & 1 &  &  & & M_{2,1} &  & M_{2,n-k} \\
		\vdots &  & \ddots &  & \vdots & \vdots & \vdots &  & \vdots  \\
		&  &  & 1 & 0  & M_{k-1,1} &  & M_{k-1,n-k} \\
		0 & 0 & \cdots & & 1 & 1 & 1 & \cdots  & 1
	\end{bmatrix}$$
	For example, when $(k,n) = (3,6)$ we find that $M(x)$ equals
	\begin{small}
		$$
		\begin{bmatrix}
			1 & 0 & 0 & x_{1,1} x_{2,1} & x_{1,2} x_{2,2}+x_{1,1} \left(x_{2,1}+x_{2,2}\right) & x_{1,3} x_{2,3}+x_{1,2} \left(x_{2,2}+x_{2,3}\right)+x_{1,1} \left(x_{2,1}+x_{2,2}+x_{2,3}\right) \\
			0 & 1 & 0 & x_{2,1} & x_{2,1}+x_{2,2} & x_{2,1}+x_{2,2}+x_{2,3} \\
			0 & 0 & 1 & 1 & 1 & 1 \\
		\end{bmatrix}$$
	\end{small}

	Denote by $\mathcal{F}^{(k)}_n$ the \textit{tropical potential}, 
	$$\mathcal{F}^{(k)}_n(y) = \sum_{J}P_J(y)\mathfrak{s}_J,$$
	where $P_J(y)$ are the (tropicalized) Plucker coordinates, evaluated on the above parametrization.  We usually assume that a chart has been chosen, taking (for example) $y_{i,1} = 0$ for $i=1,\ldots, k-1$.
	
	By the discussion which follows, assuming generic kinematics, then this can be viewed as a (dual) positive tropical Plucker vector (modulo lineality).
	
	For each pair $(L,\{i,j\})$ with $L \in \binom{\lbrack n\rbrack}{k-2}$ and $\{i,j\} \in \binom{\lbrack n\rbrack \setminus L}{2}^{nf}$.  Let 
	$$w^{(L)}_{ij} = \frac{\Delta_{L \cup \{i',j\}}\Delta_{L \cup \{i,j'\}}}{\Delta_{L \cup \{i,j\}}\Delta_{L \cup \{i',j'\}}},$$
	where $i'$ and $j'$ are the cyclic successors in $\lbrack n\rbrack \setminus L$ to $i$ and $j$, respectively, with respect to cyclic order inherited from $(1,2,\ldots, n)$.
	
	Define
	$$\text{Trop}(w^{(L)}_{ij}) = P_{L\cup \{i',j\}} + P_{L\cup \{i,j'\}} - P_{L\cup \{i,j\}}-P_{L\cup \{i',j'\}}.$$
	The positivity property in Lemma follows directly from the (quadratic) positive tropical Plucker relations.
	\begin{lem}\label{lem: positive trop cross ratios}
		Each planar cross-ratio $w^{(L)}_{ij}$ maps (the torus orbit of) the nonnegative Grassmannian into the interval $\lbrack 0,1\rbrack$.  For the tropicalization, one has
		$$\text{Trop}(w^{(L)}_{ij})(y) >0$$
		for all $y \in \mathbb{R}^{(k-1)\times(n-k-1)}$.
	\end{lem}
	
	% 		The positive tropical Plucker relations imply that all $\text{Trop}(w^{(L)}_{ij})$ are nonnegative for all $y \in \mathbb{R}^{(k-1)\times(n-k)}$, hence the integral converges.  
	% 	The proof of the last part of Proposition \ref{prop: Global Schwinger convergence} uses a key result about the $\alpha'$ expansion of the stringy integral, \cite[Claim 1]{Arkani-Hamed:2019mrd}.
	For each $J \in \binom{\lbrack n\rbrack}{k}$, let us write $\mathfrak{s}_J = \sum_{\{i,j\} \in \binom{J}{2}}s^{(J \setminus \{i,j\})}_{ij}$, where the new variables $s^{(J \setminus \{i,j\})}_{ij}$ are assumed to satisfy the relations
	$$\sum_{\ell \in \lbrack n\rbrack \setminus(J \cup \{i\}) }s^{(J \setminus \{i,j\})}_{i\ell} = 0.$$
	\begin{prop}\label{prop: Global Schwinger convergence}
		We have
		\begin{eqnarray}\label{eq:tropPot}
			\mathcal{F}^{(k)}_n(y) & = & \sum_{J \in \binom{\lbrack n\rbrack}{k}}P_J(y)\sum_{\{i,j\} \in \binom{J}{2}}s^{(J \setminus \{i,j\})}_{ij},\\
			& = & \sum_{L \in \binom{\lbrack n\rbrack}{k-2}}\sum_{\{i,j\}\in \binom{\lbrack n\rbrack \setminus L}{2}^{nf}}\text{Trop}(w^{(L)}_{ij})\eta^{(L)}_{ij},
		\end{eqnarray}
		where
		\begin{eqnarray}\label{eq: localized planar basis}
			\eta^{(L)}_{ij} & = & \sum_{\{a,b\} \subset L^c\cap \lbrack i+1,j\rbrack}s^{(L)}_{ab}.
		\end{eqnarray}
		If all of these are positive, $\eta^{(L)}_{ij}>0$, then the integral 
		$$m^{(k)}_n = \int_{\mathbb{R}^{(k-1)\times (n-k-1)}}\exp\left(-\mathcal{F}^{(k)}_n\right)dy$$
		converges; the value coincides with the CEGM scattering equations formula for $m^{(k)}_n$.  
	\end{prop}
	
	% 	\begin{prop}
		%         Equation \eqref{eq:tropPot} can be rewritten as 
		% 		$$\mathcal{F}^{(k)}_n(y) = \sum_{L \in \binom{\lbrack n\rbrack}{k-2}}\sum_{\{i,j\}\in \binom{\lbrack n\rbrack \setminus L}{2}^{nf}}\text{Trop}(w^{(L)}_{ij})\eta^{(L)}_{ij}.$$
		% 		If $\eta^{(L)}_{ij}>0$ are all positive, then the integral 
		% 		$$m^{(k)}_n = \int_{\mathbb{R}^{(k-1)\times (n-k-1)}}\exp\left(-\mathcal{F}^{(k)}_n\right)dy$$
		% 		converges.  Moreover, it is equal to the value obtained for $m^{(k)}_n$ from the CEGM formula.
		% 	\end{prop}
	
	\begin{proof}
		The verification of the expression for the $\eta^{(L)}_{i,j}$'s in Equation \eqref{eq: localized planar basis} is straightforward; indeed, the nontrivial step is to reorganize Equation \eqref{eq:tropPot} around the $\eta^{(L)}_{ij}$, one for each subset\footnote{More geometrically, one subset for each second hypersimplicial face $\Delta_{2,n-(k-2)}$ of the form $x_{\ell_1} = x_{\ell_2} = \cdots  = x_{\ell_{k-2}} = 1$ of the hypersimplex $\Delta_{k,n}$} $L\in \binom{\lbrack n\rbrack}{k-2}$.
		
		As per Lemma \ref{lem: positive trop cross ratios}, the positive tropical Plucker relations imply that all $\text{Trop}(w^{(L)}_{ij})$ are nonnegative for all $y \in \mathbb{R}^{(k-1)\times(n-k-1)}$, hence, provided that $\eta^{(L)}_{ij}$ are all positive, then the integral converges.
		
		% 		The final part is a consequence of Claim 1 of \cite{Arkani-Hamed:2019mrd}, that is, that the volume of the polar dual to the Newton polytope is equal (modulo to a possible sign) to the leading order term in the $\alpha'$ expansion of the stringy integral, which in turn coincides with the CEGM formulation of $m^{(k)}_n$
	\end{proof}
	
	\renewcommand{\thefigure}{\thesection.\arabic{figure}}
	\renewcommand{\thetable}{\thesection.\arabic{table}}
	\appendix

	\newpage
	
	\bibliographystyle{JHEP}
	\bibliography{references}
	
\end{document}